\numberwithin{equation}{section}
\newtheorem{theorem}{Theorem}[section]
\newtheorem{lemma}[theorem]{Lemma}
\newtheorem{corollary}[theorem]{Corollary}
\newtheorem{proposition}[theorem]{Proposition}
\newtheorem*{theorem*}{Theorem}
\theoremstyle{definition}
\newtheorem{definition}[theorem]{Definition}
\theoremstyle{remark}
\newtheorem{remark}[theorem]{Remark}
\newtheorem{claim}[theorem]{Claim}
\theoremstyle{definition}
\newcommand{\NN}{\mathbb{N}}
\newcommand{\QQ}{\mathbb{Q}}
\newcommand{\RR}{\mathbb{R}}
\renewcommand{\SS}{\mathbb{S}}
\newcommand{\cC}{\mathcal C}
\renewcommand{\cD}{\mathcal D}
\renewcommand{\cH}{\mathcal H}
\newcommand{\cM}{\mathcal M}
\newcommand{\cN}{\mathcal N}
\newcommand{\cQ}{\mathcal Q}
\renewcommand{\cR}{\mathcal R}
\newcommand{\cS}{\mathcal S}
\newcommand{\cT}{\mathcal T}
\newcommand{\cX}{\mathcal X}
\newcommand{\sF}{\mathscr{F}}
\newcommand{\sP}{\mathscr{P}}
\newcommand{\sS}{\mathscr{S}}
\newcommand{\bH}{\mathbf{H}}
\newcommand{\bx}{\mathbf{x}}
\newcommand{\by}{\mathbf{y}}
\newcommand{\bz}{\mathbf{z}}
\newcommand{\bOh}{\mathbf{0}}
\newcommand{\ba}{\mathbf{a}}
\newcommand{\bb}{\mathbf{b}}
\newcommand{\fG}{\mathfrak{G}}
\newcommand{\fF}{\mathfrak{F}}
\newcommand{\fN}{\mathfrak{N}}
\newcommand{\fS}{\mathfrak{S}}
\newcommand{\fp}{\mathfrak{p}}
\newcommand{\id}{\text{id}}
\DeclareMathOperator{\supp}{supp}
\newcommand{\eps}{\varepsilon}
\DeclareMathOperator{\sing}{sing}
\DeclareMathOperator{\reg}{reg}
\DeclareMathOperator{\spine}{spine}
\DeclareMathOperator{\singgen}{sing_\textnormal{gen}}
\DeclareMathOperator{\singng}{sing_\textnormal{non-gen}}
\title{Mean curvature flow with generic low-entropy initial data II}
\author{Otis Chodosh} 
\address{OC: Department of Mathematics, Bldg.\ 380, Stanford University, Stanford, CA 94305, USA}
\email{ochodosh@stanford.edu}
\author{Christos Mantoulidis} 
\address{CM: Department of Mathematics, Rice University, Houston, TX 77005, USA}
\email{christos.mantoulidis@rice.edu}
\author{Felix Schulze}
\address{FS: Department of Mathematics, Zeeman Building, University of Warwick, Gibbet Hill Road, Coventry CV4 7AL,
UK}
\email{felix.schulze@warwick.ac.uk} 
\begin{document}

\begin{abstract}
We prove that the mean curvature flow of a generic closed embedded hypersurface in $\mathbb{R}^4$ or $\mathbb{R}^5$ with entropy $\leq 2$, or with entropy $\leq \lambda(\SS^1)$ if in $\RR^6$, encounters only generic singularities. 
\end{abstract}

\maketitle

\section{Introduction}
Mean curvature flow is the gradient flow of area. A family of hypersurfaces $M(t) \subset \RR^{n+1}$ is flowing by mean curvature flow if the following equation is satisfied
\begin{equation} \label{eq:mcf}
\left(\tfrac{\partial}{\partial t} \bx \right)^{\perp} = \bH_{M(t)}(\bx).
\end{equation}
Here, $\bH_{M(t)}(\bx)$ denotes the mean curvature vector of $M(t)$ at $\bx$. When the initial data $M(0)$ is compact, mean curvature flow is guaranteed to become singular in finite time, so understanding the nature of such singularities is a fundamental problem. 

A well-known conjecture of Huisken suggests that the singularities of a generic mean curvature flow should be as simple as possible: spherical and cylindrical \cite[\#8]{Ilmanen:problems}. One approach to this issue is to study only the singularities that persist under a generic perturbation of the initial data $M(0)$. In this article we study this problem under a low-entropy condition (see Definition \ref{defi:entropy} for the definition of entropy). In particular, we obtain the following result (see Corollary \ref{coro:R4} for the precise statement)
\begin{theorem}[Low entropy generic flow in $\RR^4$, informal]\label{theo:infR4}
If $M^3 \subset \RR^4$ is a closed embedded hypersurface with entropy $\lambda(M) \leq 2$ then there exist arbitrarily small $C^\infty$ graphs $M'$ over $M$ so that the mean curvature flow starting from $M'$ has only multiplicity-one singularities of $\SS^3,\SS^2\times \RR$, and $\SS^1 \times \RR^2$-type. 
\end{theorem}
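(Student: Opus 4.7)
My overall plan is to combine the generic singularity program of Colding--Minicozzi with a classification of low-entropy self-shrinkers in $\RR^4$ and a multiplicity-one statement tailored to the entropy bound. The target is a Baire-category result: in a suitable Banach manifold of $C^\infty$-small graphs over $M$, the set of initial data whose MCF has only multiplicity-one singularities modeled on $\SS^3$, $\SS^2\times\RR$, or $\SS^1\times\RR^2$ is $C^\infty$-generic. By Huisken's monotonicity formula, every tangent flow to the MCF of a perturbation $M'$ of $M$ has Gaussian density at most $\lambda(M')\le\lambda(M)\le 2$, so the whole argument reduces to showing that, after a generic perturbation, every tangent flow is of one of the three listed multiplicity-one types.

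The first step is to enumerate the possible tangent flow models. Among smooth multiplicity-one shrinkers in $\RR^4$, the classification (building on Bernstein--Wang and subsequent work) says those with entropy $\le 2$ are the hyperplane, the generalized cylinders $\SS^k\times\RR^{3-k}$ for $k=1,2,3$, and possibly a residual class $\cS$ of non-cylindrical shrinkers (compact examples of Angenent type, noncompact asymptotically cylindrical examples, etc.). Among multiplicity-$\ge 2$ models, one checks $2\lambda(\SS^k)>2$ for $k=1,2,3$, so the entropy bound forbids every higher-multiplicity tangent flow other than a multiplicity-$\ge 2$ hyperplane.

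The second step is the actual perturbation. Every $\Sigma\in\cS$ is $F$-unstable in the Colding--Minicozzi sense (the cylinders and spheres exhaust the $F$-stable mean-convex shrinkers), so an unstable eigenfunction of the stability operator gives a one-parameter family of graphical perturbations of $M$ whose MCF has entropy strictly less than $\lambda(\Sigma)$ past the corresponding scale. By upper semicontinuity of the set of tangent flows under $C^\infty$-small perturbation and a stratification of $\cS$ by entropy value, I would inductively perturb away the $\Sigma$ with largest entropy in $\cS$, then the next largest, etc., using a Baire-category argument over a countable dense subset to combine these into a single $C^\infty$-small graph $M'$ whose MCF meets no element of $\cS$ and no sphere or cylinder with multiplicity $\ge 2$. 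The remaining task is to rule out tangent flows that are multiplicity-$\ge 2$ planes: here I would appeal to a multiplicity-one theorem in the low-entropy regime, which allows a comparison with an auxiliary smooth blowup flow and uses Brakke--White regularity to prevent multiplicity jumps past smooth points of density $<2$.

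The main obstacle is the multiplicity-two plane case; this is really a low-entropy form of Ilmanen's multiplicity-one conjecture, and it is far more delicate than perturbing away $F$-unstable shrinkers. A secondary difficulty is that $\cS$ need not be compact or discrete, so the iterative perturbation has to be organized carefully: one needs a closure/compactness statement for noncompact low-entropy shrinkers modulo cylinders, and one must make sure that perturbing away one bad tangent flow does not create new ones at larger scales. The entropy bound $\lambda(M)\le 2$ enters in each of these steps---eliminating high-multiplicity spheres and cylinders, bounding $\cS$, and providing the density threshold needed for the multiplicity-one argument---so any weakening of it would require substantially different input.
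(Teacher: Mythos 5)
Your outline misses the central difficulty that this theorem was written to overcome, and your enumeration of tangent-flow models is incomplete. With $\lambda(M)\leq 2$ in $\RR^4$, the tangent flows need not be modeled on \emph{smooth} shrinkers at all: they can be $F$-stationary varifolds with nonempty singular set, e.g.\ the static cone over the Clifford torus (or over any of the infinitely many minimal surfaces in $\SS^3$ of area below $8\pi$), all of which have entropy strictly between $\lambda(\SS^1)$ and $2$. This is exactly why the earlier result of Chodosh--Choi--Mantoulidis--Schulze required $\lambda(M)\leq\lambda(\SS^1)$ in $\RR^4$. Your "residual class $\cS$" of smooth non-cylindrical shrinkers does not contain these objects, and the Colding--Minicozzi entropy-dropping perturbation via an unstable eigenfunction does not apply to them directly, since the relevant eigenvalue problem lives on the (incomplete) regular part. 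The paper has to develop a regularity and eigenvalue theory for low-entropy $F$-stationary varifolds (finiteness of $\mu$ forces stable blow-ups, hence smoothness once one knows there are no low-entropy regular stable cones in dimension $\le 6$), and a quantitative coupling between $\mu$ and the dimension of the spine, before any perturbation argument can start. Separately, you have misidentified the "main obstacle": a multiplicity-two plane has Gaussian density exactly $2$, so after a preliminary perturbation making $\lambda(M')<2$ strictly (short-time smooth flow, or the Colding--Minicozzi entropy-decreasing perturbation if $M$ is itself a shrinker), monotonicity excludes it outright. No multiplicity-one theorem is needed under this entropy bound.

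The second genuine gap is the inductive scheme itself. "Perturb away the $\Sigma\in\cS$ of largest entropy, then the next largest" is not a well-founded induction: the set of non-generic low-entropy shrinkers (including the singular ones) is infinite and non-discrete, entropy values may accumulate, and --- as the paper emphasizes --- it is not known in general that non-generic singularities cannot accumulate on generic ones, so upper semicontinuity of tangent flows does not by itself show that killing one bad singularity does not create others. The paper's actual mechanism is different in kind: it embeds $M$ in a one-parameter local foliation $\{M_s\}$, uses the avoidance principle to show flows from distinct leaves never cross, proves that two non-crossing nearly-self-similar non-generic flows must separate at a definite exponential rate backwards in time (a rate governed by $2\mu+D<-2\kappa$, where $D$ is the spine dimension), and then runs an iterated parabolic covering argument to show that the set of parameters $s$ carrying a near-top-density non-generic singularity would have measure zero --- forcing a uniform density drop and, after a Baire argument organized by the generic strata, an open dense set of good parameters. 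If you want to salvage your approach, you would need to replace the entropy-stratified induction with a quantitative argument of this type; as written, the proposal does not close.
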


\begin{remark}
This implies that any mean curvature flow of $M'$ will be smooth for a.e.\ time $t$, see Corollary \ref{coro:reg} below.
\end{remark}

Together with K. Choi, we proved Theorem \ref{theo:infR4} for $M^2 \subset \RR^3$ with $\lambda(M) \leq 2$ in \cite[Theorem 1.1]{CCMS:low-ent-gen}. The generalization to $\RR^4$ presents significant new challenges. In particular, the flow of a closed embedded $M^3\subset \RR^4$ with $\lambda(M)\leq 2$ may have singularities modeled on \textit{singular} self-shrinkers, such as the cone over the Clifford torus.\footnote{Or more generally, this includes the cone over any of the infinitely many minimal submanifolds of $\SS^3$ with area less than $8\pi$.} Singular shrinkers were a major obstacle in many previous works (cf.\ $(\star_{n,\Lambda})$ in \cite{BernsteinWang:topology-small-ent,BernsteinWang:schoenflies} and $(\dagger_{n,\Lambda})$ in \cite{CCMS:low-ent-gen}), and precisely informed our previous strong entropy $\leq \lambda(\SS^1)$ assumption for $M^3\subset\RR^4$ in \cite[Theorem 1.2]{CCMS:low-ent-gen}).

We also have results analogous to Theorem \ref{theo:infR4} in higher dimensions (see Corollaries \ref{coro:R5} and \ref{coro:R6} for the precise statements):

\begin{theorem}[Low entropy generic flow in $\RR^5$, informal]\label{theo:infR5}
If $M^4 \subset \RR^5$ is a closed embedded hypersurface with entropy $\lambda(M) \leq 2$ then there exist arbitrarily small $C^\infty$ graphs $M'$ over $M$ so that the mean curvature flow starting from $M'$ has only multiplicity-one singularities of $\SS^4,\SS^3\times \RR$, $\SS^2 \times \RR^2$, and $\SS^1\times\RR^3$-type. 
\end{theorem}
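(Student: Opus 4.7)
The plan is to mirror the $\RR^4$ argument (Theorem~\ref{theo:infR4}) and extend the framework of \cite{CCMS:low-ent-gen} by one dimension. I would show that an arbitrarily small $C^\infty$ perturbation $M'$ of $M$ has a mean curvature flow whose tangent flows at every singular time lie in the cylindrical list $\SS^4$, $\SS^3\times\RR$, $\SS^2\times\RR^2$, $\SS^1\times\RR^3$. The bound $\lambda(M')\le 2+\eps$ is preserved by entropy monotonicity along the flow, so every tangent flow is a self-shrinker of entropy at most $2+\eps$.

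First I would classify the admissible tangent flow models. The smooth embedded self-shrinkers in $\RR^5$ with entropy below $2$ are exactly the generalized cylinders $\SS^k\times\RR^{4-k}$ for $k=1,\ldots,4$. Singular examples consist of regular cones over minimal $3$-folds $\Sigma^3\subset\SS^4$ with small enough normalized area, together with their splittings off an $\RR^j$ factor arising from lower-dimensional shrinker links. This yields a bounded catalog of non-cylindrical models which must be either avoided or replaced.

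Next I would carry out a local perturbation at each non-generic model. At a smooth cylinder $\SS^k\times\RR^{4-k}$ with $k<4$, the Colding--Ilmanen--Minicozzi instability together with the ancient one-sided flow of \cite{CCMS:low-ent-gen} yields an arbitrarily small $C^\infty$ perturbation that pushes the flow off the singularity, trading it either for smooth flow or for a singularity of strictly smaller Gaussian density. At a singular shrinker, strict one-sidedness of the first Jacobi eigenfunction on the regular part, together with appropriate capacity control at the singular set, produces a smooth ancient one-sided flow to which the same perturbation principle applies. These local replacements are stitched together via the inductive continuous-dependence argument of \cite{CCMS:low-ent-gen}, using Brakke regularity and pseudolocality to localize each perturbation in spacetime without disturbing earlier smooth portions.

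The main obstacle is the singular-shrinker step. In $\RR^5$ the space of minimal $3$-folds of $\SS^4$ below the relevant area threshold is a priori richer than in the $\RR^4$ setting, so one must verify uniformly that every singular model (and each $\RR^j$-splitting thereof) is one-sided perturbable, and that the resulting ancient one-sided flow has entropy strictly below $2+\eps$ so that the induction on singular times closes. The key technical input is a uniform first-eigenvalue / stability estimate on the link of the shrinker cone, for which one would adapt the machinery built for the $\RR^4$ case earlier in the paper.
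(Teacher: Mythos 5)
There are genuine gaps here, and they are precisely the points where the paper departs from \cite{CCMS:low-ent-gen}. First, your classification step is false: the smooth embedded self-shrinkers in $\RR^5$ with entropy below $2$ are \emph{not} exactly the generalized cylinders --- for instance the Angenent torus (entropy $\approx 1.85 < 2$) crossed with $\RR^2$ is a smooth, non-cylindrical shrinker in $\RR^5$ with entropy below $2$ --- and there is no ``bounded catalog'' of singular models either, since already the cones over the infinitely many minimal surfaces of $\SS^3$ with area below $8\pi$, crossed with $\RR$, all lie below the threshold. The paper deliberately avoids any classification: it proves a \emph{uniform} instability estimate (Proposition \ref{prop:uniform-est-spine-eig}) coupling the first eigenvalue $\mu$ of the $L$-operator on the regular part to the spine dimension, valid over the entire weakly compact class of non-generic low-entropy $F$-stationary varifolds; for $n=4$ the singular-shrinker case is absorbed because any blow-up of a shrinker with $\mu>-\infty$ has stable regular part (Proposition \ref{prop:finite-mu-smooth}), and stable regular minimal cones do not exist in these dimensions, so by Corollary \ref{coro:finite-mu-smooth} singular shrinkers automatically have $\mu=-\infty$. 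Relatedly, you let the entropy rise to $2+\eps$: the whole framework (e.g.\ Lemma \ref{lemm:sing-F-stat}, which rules out multiplicity-two planes and unions of four or more half-planes) requires entropy strictly below $2$, and the paper's first move is to perturb so that $\lambda(M')<2$ strictly.

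Second, the ``stitch the local perturbations together inductively'' step conceals the central difficulty the paper is built to overcome: in the parabolic setting it is not known that non-generic singularities cannot accumulate onto generic (cylindrical) ones, and singularities can accumulate in spacetime, so a singularity-by-singularity induction has no reason to terminate. The paper instead embeds $M$ in a local foliation $\{M_s\}$, proves a quantitative density drop (Propositions \ref{prop:main-cover-sep} and \ref{prop:dens-fS-drop}) by combining a parabolic covering of the near-top-density non-generic set with a Harnack-plus-first-eigenvalue growth estimate for the separation of neighboring one-sided flows, iterates this finitely many times to get an open dense set of good leaves (Corollary \ref{coro:dense-no-fS}), and then runs a Baire-category argument over the stratification of generic singular points (Section \ref{sec:proof-main}) to handle non-generic singularities limiting onto generic ones. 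None of this machinery appears in your outline, and the continuous-dependence induction from \cite{CCMS:low-ent-gen} that you invoke is exactly the part that does not extend beyond the $\lambda \leq \lambda(\SS^1)$ regime without these new ideas.
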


\begin{theorem}[Low entropy generic flow in $\RR^6$, informal]
If $M^5 \subset \RR^6$ is a closed embedded hypersurface with entropy $\lambda(M) \leq \lambda(\SS^1)$ then there exist arbitrarily small $C^\infty$ graphs $M'$ over $M$ so that the mean curvature flow starting from $M'$ has only multiplicity-one singularities of $\SS^5,\SS^4\times \RR$, $\SS^3 \times \RR^2$, $\SS^2\times \RR^3$-type. 
\end{theorem}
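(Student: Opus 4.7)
The plan is to follow the same overall template as the previous theorems in this paper. In fact, the $\RR^6$ case under the entropy hypothesis $\lambda(M) \leq \lambda(\SS^1)$ should be provable by an argument that closely parallels \cite[Theorem 1.2]{CCMS:low-ent-gen} (the analogous statement for $M^3 \subset \RR^4$ with $\lambda \leq \lambda(\SS^1)$), one dimension higher, once the appropriate classification of low-entropy self-shrinkers $\Sigma^5 \subset \RR^6$ is in hand. The three main ingredients would be: (i) a classification of shrinkers with $\lambda \leq \lambda(\SS^1)$, in the spirit of Bernstein--Wang \cite{BernsteinWang:topology-small-ent}; (ii) the paper's generic perturbation and level-set-flow continuation machinery, used to avoid non-generic (i.e.\ entropy-unstable) shrinkers from that list; and (iii) a strict entropy drop to exclude $\SS^1 \times \RR^4$, which sits exactly at the entropy threshold.

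For step (i), by lower semicontinuity of entropy along blow-ups every tangent shrinker has entropy $\leq \lambda(M')$, while upper semicontinuity of entropy under $C^\infty$-small perturbations of the initial data keeps $\lambda(M') \leq \lambda(M) + \eps \leq \lambda(\SS^1) + \eps$. Under this bound the classification should enumerate: multiplicity-one hyperplanes, the round $\SS^5$, the cylinders $\SS^k \times \RR^{5-k}$ for $k \in \{1,\ldots,4\}$, and possibly a controlled collection of \emph{exotic} smooth and singular shrinkers (such as shrinking cones over closed minimal hypersurfaces of $\SS^5$ with Gaussian area $\leq \lambda(\SS^1)$). For step (ii), every such exotic shrinker is entropy-unstable in the Colding--Minicozzi sense, so the paper's one-sided perturbation machinery perturbs it away---this is the direct higher-dimensional analogue of the way the cone over the Clifford torus in $\RR^4$ was dealt with in \cite{CCMS:low-ent-gen}, and is morally the same as the corresponding step in the proofs of Theorems \ref{theo:infR4} and \ref{theo:infR5}.

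The main obstacle is step (iii): ruling out $\SS^1 \times \RR^4$ tangent flows, whose entropy equals \emph{exactly} $\lambda(\SS^1)$ and therefore saturates the hypothesis. I would handle this by a preliminary strict-entropy-drop perturbation. If $\lambda(M) < \lambda(\SS^1)$ already, upper semicontinuity of entropy keeps $\lambda(M') < \lambda(\SS^1)$ for all $M'$ sufficiently $C^\infty$-close to $M$, and then lower semicontinuity along blow-ups excludes any tangent shrinker with entropy equal to $\lambda(\SS^1)$, in particular $\SS^1 \times \RR^4$. If instead $\lambda(M) = \lambda(\SS^1)$, an initial $C^\infty$-small perturbation in a negative eigendirection of the Colding--Minicozzi $\cL$-operator on $\SS^1 \times \RR^4$ (all nontrivial cylinders are strictly $F$-unstable) should strictly reduce the entropy, reducing to the previous case. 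Combining these steps with the main perturbation machinery yields an $M'$ whose mean curvature flow encounters only the multiplicity-one generic singularities $\SS^5, \SS^4 \times \RR, \SS^3 \times \RR^2, \SS^2 \times \RR^3$ claimed in the theorem.
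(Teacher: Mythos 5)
There is a genuine gap, and it lies in your steps (i)--(ii). Your plan hinges on ``a classification of low-entropy self-shrinkers $\Sigma^5\subset\RR^6$ with $\lambda\leq\lambda(\SS^1)$, in the spirit of Bernstein--Wang.'' No such classification exists, and the paper does not use (or need) one: the Bernstein--Wang classification \cite{BernsteinWang:TopologicalProperty} is specific to surfaces in $\RR^3$, and it enters the proof of this theorem only through the hypothesis \eqref{heart_cond} for $n=5$, i.e.\ through the \emph{two-dimensional cross-sections} $\Sigma'\in\cS_2^*(\Lambda)$ of potential three-dimensional spines $\RR^3\times\Sigma'$ --- not through any enumeration of five-dimensional shrinkers. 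The actual route is: apply Theorem \ref{thm:main} with $n=5$, $\Lambda=\lambda(\SS^1)$, checking that $(\diamondsuit_{5,\Lambda})$ holds vacuously (Simons) and that $(\heartsuit_{5,\lambda(\SS^1)})$ holds because $\cS_2^*(\lambda(\SS^1))\subset\cS_2^{\textrm{gen}}$. Relatedly, your step (ii) --- ``every exotic shrinker is entropy-unstable, so the perturbation machinery perturbs it away'' --- skips the central difficulty. Knowing $\mu(\Sigma)<-1$ for each individual non-generic shrinker is not enough: non-generic singular points can accumulate along the spine of a tangent flow, and the covering/density-drop argument (Propositions \ref{prop:uniform-est-spine-eig}, \ref{prop:main-cover-sep}, \ref{prop:dens-fS-drop}) requires the \emph{quantitative, uniform} inequality $2\mu+D<-2\kappa$ coupling the first eigenvalue to the spine dimension. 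This coupling is exactly why the threshold for $n=5$ is $\lambda(\SS^1)$ rather than $2$: for $\Lambda=2$ one would need every non-generic $\Sigma^2\subset\RR^3$ with $\lambda(\Sigma)<2$ to satisfy $\mu(\Sigma)<-\tfrac32$, which is open (Remark \ref{rema:heart.52}). Your proposal gives no explanation of where the $\lambda(\SS^1)$ threshold comes from, which is the heart of this particular corollary.

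Two smaller points. Your step (iii) correctly identifies that $\SS^1\times\RR^4$ is excluded from the final list by a strict entropy drop (this is indeed how the paper's Section \ref{subsec:foliation} arranges $\lambda(M')<\Lambda$, after which monotonicity forbids any tangent flow of entropy $\lambda(\SS^1)$); but your proposed mechanism in the case $\lambda(M)=\lambda(\SS^1)$ --- perturbing ``in a negative eigendirection of the $\cL$-operator on $\SS^1\times\RR^4$'' --- is not coherent, since the perturbation must be applied to the compact initial hypersurface $M$, not to a hypothetical tangent flow. The correct statement is that either $M$ is not a shrinker (so a short time of smooth flow strictly drops entropy) or $M$ is a shrinker (so \cite[Theorem 0.7]{ColdingMinicozzi:generic} provides an entropy-decreasing graph unless $M$ is round).
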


Our results also imply a topological classification of low-entropy hypersurfaces (see Corollary \ref{cor:low-ent-top} for a more general statement and proof). 

\begin{corollary}[Mean curvature flow as a smooth isotopy in low entropy]\label{cor:low-ent-top-informal}
Let $n\in\{2, 3, 4, 5\}$ and $M^n\subset \RR^{n+1}$ be a closed connected embedded hypersurface.
\begin{enumerate}
\item[(a)] If $\lambda(M) \leq \lambda(\SS^{n-1})$ then perhaps after a small initial $C^\infty$ perturbation, the mean curvature flow provides a smooth isotopy from $M$ to a standard $\SS^n$ in $\RR^{n+1}$.
\item[(b)] If $\lambda(M) \leq \lambda(\SS^{n-2})$ then perhaps after a small initial $C^\infty$ perturbation, the mean curvature flow with surgery of \cite{Daniels-Holgate}  (cf.\ \cite{HuiskenSinestrari:surgery, HaslhoferKleiner:surgery}) provides a smooth isotopy from $M$ to the boundary of a standard handlebody that is either a standard ball $B^n$ or a boundary connect sum of finitely many $B^{n-1} \times \SS^1$'s.
\end{enumerate}
\end{corollary}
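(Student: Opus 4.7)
The plan is to combine the generic regularity theorems of this paper (formally, Corollaries \ref{coro:R4}, \ref{coro:R5}, and the $\RR^6$ analog) with an entropy-based exclusion of non-neckpinch singularity models, and then interpret the (possibly surgically modified) flow as a smooth ambient isotopy.

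I would first reduce to the strict entropy inequality. If $M$ is a round sphere in case (a), or is already the boundary of a standard handlebody in case (b), there is nothing to prove. Otherwise I would produce a small $C^\infty$ perturbation $M_1$ of $M$ with $\lambda(M_1)<\lambda(M)$: if $M$ is not a self-shrinker, one briefly evolves $M$ by MCF and invokes strict monotonicity of entropy; if $M$ is a shrinker, then being closed it is not a cylinder $\SS^j\times\RR^{n-j}$ with $j<n$, so by the Colding-Minicozzi $F$-instability classification one can deform $M$ to lower entropy. Thus I may assume $\lambda(M)<\lambda(\SS^{n-1})$ in (a) and $\lambda(M)<\lambda(\SS^{n-2})$ in (b). Applying the appropriate generic regularity theorem then yields an arbitrarily small $C^\infty$ graph $M'$ over $M$ whose MCF has only multiplicity-one singularities modeled on $\SS^j\times\RR^{n-j}$. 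Since the Gaussian density at such a singularity equals $\lambda(\SS^j)$ and is $\leq\lambda(M')<\lambda(\SS^{n-k})$ (with $k=1$ in (a), $k=2$ in (b)), and since $\lambda(\SS^0)>\lambda(\SS^1)>\cdots\to 1$, this forces $j\geq n-k+1$: only the extinction singularity $j=n$ is allowed in (a), while only extinctions and $\SS^{n-1}\times\RR$ neckpinches are allowed in (b).

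In case (a), the flow is therefore smooth on $[0,T)$, where $T$ is the unique extinction time; by Huisken's classification of compact positive-mean-curvature shrinkers, $M(t)$ is a small $C^\infty$ graph over a rescaled round sphere for $t$ close to $T$. Reparametrizing time and composing with the perturbation isotopies produces the desired smooth isotopy from $M$ to the standard $\SS^n$. In case (b), the allowed singularities are exactly those covered by the mean curvature flow with surgery of \cite{Daniels-Holgate}. Each neckpinch surgery removes a nearly-cylindrical region $\SS^{n-1}\times[-\varepsilon,\varepsilon]$ and caps it off by two round $n$-balls; topologically this reverses the attachment of a standard $1$-handle to the bounded $(n+1)$-dimensional region. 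After finitely many surgeries and component-wise extinctions the flow terminates, and running the surgery history in reverse exhibits $M$ as the boundary of a standard handlebody built from one $0$-handle per extinct component plus one $1$-handle per neckpinch. Connectedness of $M$ then yields the claimed topological form.

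The main obstacle is the smooth-topological bookkeeping in (b): one must verify that each neckpinch surgery indeed extends to a genuine smooth ambient isotopy of the bounded region augmented by a standard $1$-handle attachment, so that the final $(n+1)$-manifold is a standard handlebody rather than an exotic variant, and that these piecewise isotopies concatenate into one global smooth isotopy compatible with the surgery flow. A subsidiary point is to check that the generic flow of Corollaries \ref{coro:R4}--\ref{coro:R6} only produces singularities compatible with the surgery framework of \cite{Daniels-Holgate}, but this is precisely the entropy-restricted form of those theorems.
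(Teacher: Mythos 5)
Your proposal is correct and follows essentially the same route as the paper: reduce to strict entropy inequality via short-time flow or the Colding--Minicozzi perturbation of shrinkers, apply the generic regularity theorem, use the density/entropy comparison to restrict the allowed cylindrical singularities, and then conclude via smoothness of the flow in case (a) and the surgery construction of \cite{Daniels-Holgate} in case (b). The topological bookkeeping you flag as the main obstacle in (b) is exactly what the paper delegates wholesale to the cited surgery paper, so your treatment is if anything more detailed than the original.
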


This topological classification is new when $n \in \{ 4, 5 \}$. When $n \in \{ 2, 3 \}$, a slightly different version of (a) was first proven by Bernstein--Wang \cite{BernsteinWang:schoenflies,BernsteinWang:TopologicalProperty}, and as presented (a) and (b) both follow from our work with K. Choi \cite{CCMS:generic1,CCMS:low-ent-gen}. Our work relied on various insights from the Bernstein--Wang program (see also \cite{ColdingMinicozziIlmanenWhite,BernsteinWang:1,Zhu:entropy,BernsteinWang:TopologicalProperty,BernsteinWang:relative-entropy,BernsteinWang:expander-compactness,BernsteinWang:degree-expander,bernsteinWang:top-uniqueness-expanders,HershkovitsWhite:sharp-entropy}). 

At the moment, serious obstacles remain to extend the isotopy construction (Corollary \ref{cor:low-ent-top-informal}) to the case $\lambda(M)\leq 2$ using Theorem \ref{theo:infR4}, even assuming a (hypothetical) construction of a flow with surgery (cf.\ \cite{haslhofer-du:hearing-shape,CHH:winglike,CHH:translator,DH:rotating,CDDHS:bubble-sheet-oval}).  

We refer the reader to \cite{HuiskenSinestrari:convexity,HuiskenSinnestrari:MCF-mean-convex,HuiskenSinestrari:surgery,Brendle:inscribed-sharp,BrendleHuisken:R3,HaslhoferKleiner:estimates,HaslhoferKleiner:surgery,BHH:2-convex-spheres, BuzanoHaslhoferHerskovits,ADS,ADS2,BrendleChoi:3d,BrendleChoi:nD,ChoiHaslhoferHershkovits,ChoiHaslhoferHershkovitsWhite,SunXue:AC,SunXue:closed,SunXue:cyl,ChuSun:genus1} for other results on mean curvature flow.

\subsection{The low entropy assumption and a strong multiplicity-one conjecture} As a consequence of our work with K.\ Choi \cite{CCMS:generic1,CCS:generic2} (building on \cite{Ilmanen:singularities,ColdingMinicozzi:generic,ColdingMinicozziIlmanenWhite,Brendle:genus0,Wang:ends-conical,BernsteinWang:TopologicalProperty}), the ``only'' remaining obstacle to a theory of generic mean curvature flows in $\RR^3$ without any entropy assumptions was the possible occurrence of smooth tangent flows with integer multiplicity---which the well-known ``multiplicity-one conjecture'' of Ilmanen \cite{Ilmanen:problems} posited should not occur. Some time after the first version of this paper appeared, Bamler--Kleiner confirmed the multiplicity-one conjecture (in $\RR^3$) \cite{BamlerKleiner}. When combined with our previous work, this verified Huisken's conjecture in $\RR^3$. More recently, we found an alternative ``entropy drop'' approach \cite{CCMS:genericR3} to replace the results in \cite{CCMS:generic1,CCS:generic2}.  

 In Appendix \ref{app:strong-mult-one}'s Theorem \ref{theo:generic-mult-one-assump}, we show how Theorems \ref{theo:infR4} and \ref{theo:infR5} can generalize to give generic mean curvature flows in $\RR^4$, $\RR^5$ under the assumption that a certain strong multiplicity-one property holds (without requiring $\lambda(M) \leq 2$).  See also Remark \ref{rema:strong.mult.one.r3}.

\subsection{Generic regularity of minimizing hypersurfaces} In \cite{CMS:hardt-simon910} we proved generic regularity for solutions to the Plateau problem (for hypersurfaces) in ambient $9$- and $10$-dimensions (see also \cite{CMS:improved-reg}). The present paper can be considered as a parabolic analogue of \cite{CMS:hardt-simon910}, but there are several new serious issues in this paper that were not present there.

In the elliptic problem, a crucial feature is that singular points cannot limit to regular points (by density considerations). In the parabolic problem, we do \textit{not} know in general whether non-generic singularities can limit to generic ones (but see \cite{ColdingIlmanenMinicozzi} for a related result); it is only known that this cannot happen if the limiting singularity is $\SS^n(\sqrt{2n})$ (then apply \cite{Huisken:convex}) or $\SS^{n-1}(\sqrt{2(n-1)}) \times \RR$ (then apply \cite{BernsteinWang:TopologicalProperty,ChoiHaslhoferHershkovits,ChoiHaslhoferHershkovitsWhite}). This creates serious issues when working with an entropy bound $\lambda(M) \leq 2$ (as we do in this paper) rather than $\lambda(M) \leq \lambda(\SS^{n-1})$ (as in \cite{CCMS:low-ent-gen}). We expect the techniques we have developed here to handle this (cf.\ Section \ref{sec:proof-main}) to be applicable in other situations. 

There are two additional serious issues in going from the elliptic to the parabolic problem:
\begin{enumerate}
\item The dimension of the spine of a singularity versus the eigenvalue estimate are \emph{coupled} (cf.\ Propositions \ref{prop:uniform-est-spine-eig} and \ref{prop:dens-fS-drop}), necessitating a more complicated covering argument to prove the final result. 
\item The parabolic Harnack inequality requires one to ``go back in time,'' significantly complicating the proof of the separation estimates (cf.\ Proposition \ref{prop:main-cover-sep}). 
\end{enumerate}

\subsection{Related work in other settings} 
We also mention here some other related work. Figalli--Ros-Oton--Serra \cite{FigalliRosOtonSerra:obstacle.generic.regularity} have proven generic regularity of solutions to the obstacle problem (see also \cite{FS:obst-fine}). They have subsequently applied these techniques to study the Stefan problem \cite{FROS:stefan}. These papers have some features in common with the present paper (and \cite{CMS:hardt-simon910,CMS:improved-reg}), particularly the general strategy of (super-linear) space-time H\"older estimates versus estimates for the space-time singular set, although the technical details are very different. In particular, the serious issue of ``generic'' versus ``non-generic'' singularities faced here does not  appear to have an analog in the previously cited works.

Finally, we also note the recent work of Fern\'andez-Real--Yu \cite{FRY:freebdry} establishing improved generic regularity for free-boundary problems by using a strategy that seems to have some relation with the technique used in \cite{CMS:hardt-simon910,CMS:improved-reg}.

\subsection{Definitions}
Before precisely stating our main results we first recall several definitions, some of which are now standard.

\begin{definition}[Colding--Minicozzi entropy] \label{defi:entropy}
	Following Colding--Minicozzi \cite{ColdingMinicozzi:generic}, we define the entropy of $M^n\subset \RR^{n+1}$ to be:
	\begin{equation*} 
	\lambda(M) := \sup_{\substack{\bx_0\in\RR^{n+1}\\t_0>0}}\int_{M} (4\pi t_0)^{-\frac n 2} e^{-\frac{1}{4t_0} |\bx - \bx_0|^{2}} \, d\cH^n(\bx)\, .
	\end{equation*}
	By Huisken's monotonicity of Gaussian area, $t\mapsto \lambda(M(t))$ is non-increasing when $M(t)$ is a family of closed hypersurfaces flowing smoothly by mean curvature flow, or more generally a Brakke flow with bounded area ratios (which holds, including for blowups, assuming that it holds for the initial data).
\end{definition}

\begin{remark} \label{rema:lambda.product}
	It is not hard to see that if $M^n \subset \RR^{n+1}$ is a closed hypersurface, then $\lambda(M) = \lambda(\bz_0 + O(M \times \RR^\ell))$ for all $\ell \geq 1$, $O \in O(n+\ell+1)$, $\bz_0\in\RR^{n+\ell+1}$. 
\end{remark}

\begin{definition}[Shrinkers] \label{defi:shrinkers}
	A (possibly incomplete) smooth hypersurface $\Sigma \subset \RR^{n+1}$ is said to be a shrinker if it satisfies $\bH + \tfrac12 \bx^\perp = \bOh$. By Huisken's monotonicity formula, these model the smooth parts of singularity models in mean curvature flow. We consider the following sets of shrinkers:
	\begin{enumerate}
		\item $\cS_n$, the set of \textit{complete} smooth self-shrinkers in $\RR^{n+1}$ with $\lambda(\Sigma) < \infty$.
		\item $\cS_n^*$, the set of non-flat elements of $\cS_n$.
		\item $\cS^\textrm{gen}_n$, the set of ``generic'' shrinkers, i.e., round self-shrinking spheres and cylinders over them in $\RR^{n+1}$:
			\[ \cS^\textrm{gen}_n : = \left\{O(\SS^{j}(\sqrt{2j}) \times \RR^{n-j}) \in \cS_n : j = 1,\dots,n, \, O \in O(n+1) \right\}. \]
	\end{enumerate}
	When working with entropy upper bounds, we need only consider low-entropy shrinkers. For $\Lambda > 0$, we define:
	\begin{align*}
		\cS_n(\Lambda) & := \{\Sigma \in \cS_n : \lambda(\Sigma) < \Lambda\}, \\
		\cS_n^*(\Lambda) & : = \cS_n(\Lambda) \cap \cS_n^*. 
	\end{align*}
\end{definition}

\begin{remark}[Generic shrinker entropies] \label{rema:stone}
	Note that Stone \cite{Stone} computed the entropies of spheres---and thus all generic shrinkers by Remark \ref{rema:lambda.product}---to be
	\[
		2 > \lambda(\SS^1)=\sqrt{\frac{2\pi}{e}} \approx 1.52  > \frac 32 > \lambda(\SS^2) = \frac{4}{e} \approx 1.47 > \dots > \lambda(\SS^n). 
	\]
\end{remark}

\begin{definition}[Forwards-backwards parabolic balls]
	Throughout the paper, for spacetime points $X=(\bx,t) \in \RR^{n+1}\times \RR$ and radii $r > 0$, 
	\[ P_r(X) = B_r(\bx) \times (t-r^2,t+r^2) \]
	denotes the forwards-backwards parabolic ball of radius $r$ about $X$.
\end{definition}

\begin{definition}[Regular vs singular and generic vs non-generic points]
Let $\cM$ be an integral $n$-Brakke flow in $\RR^{n+1}$ with support $\supp \cM$. 
\begin{enumerate}
	\item Let $\reg\cM$ be the set of all \emph{regular points} $X \in \supp \cM$, i.e., those for which there exists $r>0$ so that $\cM \lfloor P_r(X)$ is the multiplicity one Brakke flow associated to a smooth mean curvature flow. 
	\item Let $\sing\cM$ be the set of all \emph{singular points} $X \in \supp \cM\setminus \reg\cM$ so that $\cM$ is defined for times slightly before $t$.
	\item Decompose $\sing \cM$ into ``generic'' and ``non-generic'' singular points:
	\[
		\sing \cM = \sing_\textrm{gen}\cM \cup  \sing_\textrm{non-gen}\cM,
	\]
	where a point $X \in \sing \cM$ is declared to be in $\sing_\textrm{gen}\cM$ if some (and thus any by \cite{ColdingIlmanenMinicozzi,ColdingMinicozzi:uniqueness-tangent-flow}) tangent flow is the self-similar flow associated to $\Sigma \in \cS_n^\textrm{gen}$ with multiplicity one; otherwise $X \in \sing_\textrm{non-gen}\cM$.
\end{enumerate}  
Note that, by this definition, if $\cM$ is a Brakke flow starting at $t=0$, then we have the disjoint union decomposition
\[ \supp\cM = \supp\cM(0) \cup \reg\cM \cup \sing\cM. \]
\end{definition}

We now refer the reader to \cite[Section 2]{CCMS:generic1} for the complete definitions of the terms used below.  
\begin{definition}\label{defi:fF}
Let $M^n\subset \RR^{n+1}$ be a closed embedded hypersurface.
\begin{enumerate}
	\item[(a)] We denote by $\hat \fF(M)$ the set of unit-regular cyclic integral Brakke flows $\cM$ with $\cM(0) = \cH^n\lfloor M$. 
	\item[(b)] We denote by $\fF(M)\subset \hat\fF(M)$ the set of $\cM \in \hat \fF(M)$ with the following property: there exist closed embedded hypersurfaces $M_i\subset \RR^{n+1}$, each disjoint from $M$, but converging smoothly to $M$, and $\cM_i\rightharpoonup \cM$ for some  $\cM_i \in \hat \fF(M_i)$. 
\end{enumerate}
Note that $ \fF(M)\neq \emptyset$ by \cite{Ilmanen:levelset,White:cyclic} (see also \cite[Appendix B]{HershkovtisWhite}).
\end{definition}
 
\subsection{Main theorem and discussion of main hypotheses}
 
\begin{theorem}\label{thm:main}
Let $n \in \{ 2, 3, \ldots \}$, $\Lambda \in (0, 2]$, and assume that our two main hypotheses, \eqref{diamond_cond} and \eqref{heart_cond} (discussed below), both hold. 

Then, for every closed embedded hypersurface $M^n\subset \RR^{n+1}$ with $\lambda(M) \leq \Lambda$ there exist arbitrarily small $C^\infty$ graphs $M'$ over $M$ so that $\sing_\textnormal{non-gen} \cM' = \emptyset$ for all $\cM'\in\fF(M')$.
\end{theorem}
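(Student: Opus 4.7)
The plan is to prove this by induction on the density of tangent flows at non-generic singularities, using a local-to-global perturbation scheme, with careful attention to the potential accumulation of non-generic singularities at generic ones. For $\cM \in \fF(M)$, any $X \in \sing_\textnormal{non-gen}\cM$ has a tangent flow modeled on some $\Sigma \in \cS_n^*(\Lambda) \setminus \cS_n^\textnormal{gen}$, so its Gaussian density $\lambda(\Sigma)$ lies in a compact subset of $(1, \Lambda]$. Taking a supremum over such densities gives a largest candidate $\theta_\textnormal{max}$, and one proceeds top-down from $\theta_\textnormal{max}$.

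The inductive step begins with a local perturbation lemma: at any non-generic singular point modeled on a shrinker $\Sigma$, there is a one-sided family of $C^\infty$-small perturbations of the initial data, supported in a spacetime neighborhood of the point, whose associated Brakke flows in $\fF$ avoid a non-generic singularity of that type nearby. This is where Hypothesis \eqref{diamond_cond} enters, presumably packaging the ``instability'' of non-generic shrinkers, either via a positive eigenfunction of the drift-Jacobi (Ornstein--Uhlenbeck) operator on $\Sigma$ in the spirit of Colding--Minicozzi and \cite{CCMS:generic1}, or via a Hardt--Simon--type foliation by expanders of the asymptotic cone of $\Sigma$, as in the elliptic situation of \cite{CMS:hardt-simon910}.

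Next comes globalization: the density-$\theta_\textnormal{max}$ non-generic singular set of any $\cM \in \fF(M)$ is compact and admits a finite cover by parabolic balls on which the local perturbation applies, and one pastes these perturbations into a single $C^\infty$-small deformation of $M$. Here Hypothesis \eqref{heart_cond} plays the role of the separation/density-drop estimate foreshadowed in the introduction (cf.\ Propositions \ref{prop:uniform-est-spine-eig}, \ref{prop:dens-fS-drop}, \ref{prop:main-cover-sep}), controlling the size of a Besicovitch-type cover in terms of the spine dimension and first eigenvalue and ensuring that a perturbation on one ball does not reintroduce a non-generic singularity on another. One then iterates over the finitely many shrinking density thresholds below $\theta_\textnormal{max}$ and extracts a diagonal $C^\infty$-limit to produce $M'$ arbitrarily close to $M$ with $\sing_\textnormal{non-gen}\cM' = \emptyset$ for every $\cM' \in \fF(M')$.

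The principal obstacle, and the main difference from the $\lambda(M) \leq \lambda(\SS^{n-1})$ setting of \cite{CCMS:low-ent-gen}, is that under the weaker bound $\Lambda \leq 2$ non-generic singularities may accumulate at generic ones, since such accumulation is only known to be forbidden when the limit is $\SS^n$ or $\SS^{n-1} \times \RR$. Consequently the global cover in the middle step cannot be carried out once and for all: it must be refined at finer and finer scales near the generic singular stratum, with the coupling between spine dimension and first eigenvalue (item (1) of the introduction) dictating the admissible scale, while the backward-in-time character of the parabolic Harnack inequality (item (2)) complicates the separation estimate. Reconciling these two features inside a single induction is the technical heart of the argument, and is precisely the content extracted from the two hypotheses \eqref{diamond_cond} and \eqref{heart_cond}.
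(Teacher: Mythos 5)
Your proposal correctly names the ingredients advertised in the introduction (eigenvalue/spine estimates, covering, separation, density drop, the accumulation of non-generic at generic singularities), but the way you assemble them is not how the paper proceeds, and the assembly as described has a genuine gap. The central step you propose --- a local perturbation lemma producing perturbations ``supported in a spacetime neighborhood'' of each non-generic singular point, followed by pasting finitely many such perturbations --- is exactly the step that cannot be carried out directly: a perturbation of the initial hypersurface is not local in its effect on the flow, and there is no mechanism in your outline preventing the pasted perturbation from creating new non-generic singularities elsewhere or at later times (the ``perturbation on one ball reintroducing a singularity on another'' problem, which you acknowledge but do not resolve). The paper avoids this entirely: it embeds $M$ in a \emph{global} one-parameter foliation $\{M_s\}_{s\in S}$ of uniformly low entropy and proves that the set of bad parameters is nowhere dense. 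The quantitative engine is Proposition \ref{prop:dens-fS-drop}: if the supremal density $\cD_\fN(s)$ of non-generic singular points in a closed set $\fN$ did not drop by a definite $\eta_0$ along some sequence $s\to s_0$, then an iterated covering/separation argument (Proposition \ref{prop:main-cover-sep}, applied along a tree of parabolic balls at scales $\gamma^\ell$), combined with the avoidance principle and the unit foliation speed, would force $|S| \leq 32 r_0 |\cT[0]| (C\gamma^{\kappa})^{\ell}\to 0$, a contradiction. The conversion of spacetime covering numbers into a bound on the length of the parameter interval --- via the separation estimate, which propagates graphical separation from scale $\gamma^\ell$ back to scale $1$ using the parabolic Harnack inequality and the eigenvalue growth Lemma \ref{lemm:lin-growth} --- is the heart of the proof and is absent from your outline. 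Your ``diagonal limit over finitely many density thresholds'' also needs justification: the set of entropies of non-generic shrinkers below $\Lambda$ is not known to be finite or discrete; the paper instead iterates a fixed quantified drop $\eta_0$ finitely many times.

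Two further inaccuracies. First, the roles of the hypotheses are essentially reversed: \eqref{heart_cond}, not \eqref{diamond_cond}, is what packages the instability of non-generic shrinkers (via Proposition \ref{prop:uniform-est-spine-eig} it guarantees $2\mu + D < -2\kappa$, i.e.\ the first eigenvalue beats the spine dimension), while \eqref{diamond_cond} is a regularity hypothesis ensuring that $F$-stationary varifolds with $\mu > -\infty$ are smooth (Corollary \ref{coro:finite-mu-smooth}), which feeds into Propositions \ref{prop:geo-input} and \ref{prop:uniform-est-spine-eig}. Second, the accumulation of non-generic at generic singularities is not handled by refining a cover near the generic stratum; it is handled at the very end (Section \ref{sec:proof-main}) by applying the density-drop machinery to the closed sets $\fN_\alpha$ (non-generic points away from $\alpha$-neighborhoods of the generic strata, stratified by density, with closedness supplied by the gap constant $\eta_1$ from \cite{ColdingIlmanenMinicozzi}), taking a Baire-category intersection of the resulting dense open parameter sets over $\alpha\to 0$, and deriving a set-theoretic contradiction in Lemma \ref{lemm:final-regularity-concl}. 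No such argument, or substitute for it, appears in your proposal.
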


\begin{remark}
	The case $n=2$ already follows from \cite[Theorem 1.1]{CCMS:low-ent-gen}, but it is recovered by our more general techniques, so we include it for generality.
\end{remark}

\textit{First main hypothesis}, \eqref{diamond_cond}. Not every shrinker one encounters along a weak mean curvature flow is a complete smooth hypersurface, as considered in $\cS_n$. Instead, it may itself have singular points, which (after a blow-up argument) are modeled by minimal cones. In \textit{certain} occasions (see Proposition \ref{prop:finite-mu-smooth}) these cones are, in fact, stable. Together with a dimension reduction argument in geometric measure theory due to Federer, this naturally leads us to consider the following class of objects:

\begin{definition}[Regular stable cones]
	We define $\cR\cS\cC_k^*$ to be the set of regular stable stationary cones $\cC^k\subset \RR^{k+1}$, i.e., those cones that are stationary with $\sing \cC = \{\bOh\}$ (in particular, $\cC$ is non-flat), and so that $\reg \cC$ is stable as a minimal surface. 
\end{definition}
By \cite{Simons:minvar}, $\cR\cS\cC_k^* = \emptyset$ for $2 \leq k \leq 6$. On the other hand, $\cR\cS\cC_k^* \neq \emptyset$ for $k \geq 7$. As with shrinkers, we will only be interested in cones with entropy below our upper bound $\Lambda > 0$:
\[
	\cR\cS\cC^*_k(\Lambda) : = \{\cC \in \cR\cS\cC^*_k : \lambda(\cC) < \Lambda\}. 
\]
Our first main hypothesis is that there are \textit{no} such cones below our entropy threshold:
\begin{equation}\label{diamond_cond}
\tag{$\diamondsuit_{n,\Lambda}$} 
\cR\cS\cC_k^*(\Lambda) = \emptyset \qquad \textrm{ for } k=7,\ldots,n. 
\end{equation}

\begin{remark} \label{rema:diamond}
	Note that \eqref{diamond_cond} is vacuously true for $n \leq 6$ and $\Lambda > 0$. It is a well-known open problem to estimate $\Lambda > 0$ so that \eqref{diamond_cond} holds if $n \geq 7$ (cf.\ \cite{IlmanenWhite:sharp.entropy}).
\end{remark}

\textit{Second main hypothesis}, \eqref{heart_cond}. For a smooth but possibly incomplete shrinker $\Sigma$, we define $\mu(\Sigma)$ to be the first eigenvalue of the $L$-operator, namely 
\begin{equation}\label{eq:first-eig-L}
\mu(\Sigma) : = \inf\left\{   \int_\Sigma (|\nabla f|^2 - |A|^2 f^2 - \tfrac 12 f^2) e^{-\frac 14 |\cdot|^2} : f \in C^\infty_c(\Sigma), \int_\Sigma f^2 e^{-\frac 1 4 |\cdot|^2}  = 1  \right\},
\end{equation}
We then define 
\[
\cS_n^*(\Lambda,\mu) : = \{ \Sigma \in \cS_n^*(\Lambda) : \mu(\Sigma) \geq \mu\}. 
\]
The second main hypothesis is that low-entropy non-generic shrinkers are ``sufficiently'' unstable:
\begin{equation}\label{heart_cond}
\tag{$\heartsuit_{n,\Lambda}$} 
\cS_{k}^*(\Lambda,-\tfrac{n-k}{2})\subset \cS_k^\textrm{gen} \qquad \textrm{ for } k=2,\ldots,n-3. 
\end{equation}
\begin{remark} \label{rema:heart}
Note that \eqref{heart_cond} does hold for:
\begin{enumerate}
	\item[(a)] $n \leq 4$ and all $\Lambda$, vacuously;
	\item[(b)] $n = 5$ and $\Lambda \leq \lambda(\SS^1)$, since $\cS_2^*(\lambda(\SS^1),-\frac 32) \subset  \cS_2^*(\lambda(\SS^1))\subset \cS_2^\textrm{gen}$ by \cite[Corollary 1.2]{BernsteinWang:TopologicalProperty}.
\end{enumerate}
In our proof, $(\heartsuit_{n,\Lambda})$ is used in Proposition \ref{prop:uniform-est-spine-eig} to ensure that $\mu$ (our primary tool for instability) is sufficiently negative to overcome the possible presence of a spine along which non-generic singularities may accumulate.
\end{remark}

\subsection{Applications of main theorem}

The following are all immediate corollaries of our main theorem and Remarks \ref{rema:diamond}, \ref{rema:heart}. We skip the low dimensional case $n=2$ since it was already stated in \cite{CCMS:low-ent-gen}. Everywhere below, $M$ denotes a closed embedded hypersurface. 

\begin{corollary}\label{coro:R4}
For $M^3\subset \RR^4$ with $\lambda(M) \leq 2$ there exist arbitrarily small $C^\infty$ graphs $M'$ over $M$ so that $\sing_\textnormal{non-gen} \cM' = \emptyset$ for all $\cM'\in\fF(M')$.
\end{corollary}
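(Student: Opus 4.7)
The plan is simply to apply Theorem \ref{thm:main} with $n = 3$ and $\Lambda = 2$, and verify that both standing hypotheses are satisfied in this dimensional/entropy regime. The content of the corollary lies entirely in Theorem \ref{thm:main}; what remains is bookkeeping on the indices in the two hypothesis conditions.

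For the first hypothesis \eqref{diamond_cond}, we need $\cR\cS\cC_k^*(2) = \emptyset$ for $k = 7, \ldots, n = 3$. This range of $k$ is empty, so the condition holds vacuously (as already noted in Remark \ref{rema:diamond}). In more conceptual terms, the ambient dimension $n+1 = 4$ is low enough that Simons's theorem rules out nontrivial regular stable minimal cones of any admissible dimension, so no such cones can ever arise as singular blowups along the flow.

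For the second hypothesis \eqref{heart_cond}, we need $\cS_k^*(2, -\tfrac{n-k}{2}) \subset \cS_k^{\textrm{gen}}$ for $k = 2, \ldots, n - 3 = 0$. Again the range of $k$ is empty, so the condition is vacuously satisfied (cf.\ Remark \ref{rema:heart}(a)). Intuitively, the dimension $n = 3$ is too low for any ``spine'' of dimension $\geq 3$ along which non-generic singularities might accumulate to appear, so the eigenvalue-based instability argument used in the body of the paper does not need to absorb any spine dimension here.

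There is no genuine obstacle: both hypotheses in Theorem \ref{thm:main} are vacuously true when $n = 3$, $\Lambda = 2$, so the corollary follows directly. The real work is entirely inside the proof of Theorem \ref{thm:main}, which must handle the singular shrinkers (e.g.\ the cone over the Clifford torus) that can appear in $\RR^4$ at entropy $\leq 2$ and which, as stressed in the introduction, were the main new obstruction relative to \cite{CCMS:low-ent-gen}.
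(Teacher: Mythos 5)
Your proposal is correct and matches the paper's own derivation: Corollary \ref{coro:R4} is obtained there as an immediate consequence of Theorem \ref{thm:main} together with Remarks \ref{rema:diamond} and \ref{rema:heart}, both of which record precisely the vacuousness of \eqref{diamond_cond} and \eqref{heart_cond} that you verify for $n=3$, $\Lambda=2$. Nothing further is needed.
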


This improves the previous bound $\lambda(M) \leq \lambda(\SS^1)$ from \cite{CCMS:low-ent-gen}.

\begin{corollary}\label{coro:R5}
For $M^4\subset \RR^5$ with $\lambda(M) \leq 2$ there exist arbitrarily small $C^\infty$ graphs $M'$ over $M$ so that $\sing_\textnormal{non-gen} \cM' = \emptyset$ for all $\cM'\in\fF(M')$.
\end{corollary}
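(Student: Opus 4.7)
The plan is to apply Theorem \ref{thm:main} directly with parameters $n = 4$ and $\Lambda = 2$, so the only task is to verify the two main hypotheses \eqref{diamond_cond} and \eqref{heart_cond} in this regime.

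For \eqref{diamond_cond} with $n = 4$, the indexing range $k \in \{7, \ldots, n\}$ is empty, so the condition is vacuously satisfied; this matches Remark \ref{rema:diamond}, which records that \eqref{diamond_cond} holds for all $n \leq 6$. For \eqref{heart_cond} with $n = 4$, the indexing range $k \in \{2, \ldots, n-3\}$ reduces to $\{2, \ldots, 1\}$ and is also empty, so the condition again holds vacuously, in agreement with Remark \ref{rema:heart}(a). With both structural hypotheses trivially in place, Theorem \ref{thm:main} applies and delivers precisely the desired conclusion: for every closed embedded $M^4 \subset \RR^5$ with $\lambda(M) \leq 2$, arbitrarily small $C^\infty$ graphs $M'$ over $M$ exist for which $\sing_\textnormal{non-gen} \cM' = \emptyset$ for every $\cM' \in \fF(M')$.

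Accordingly there is no substantive obstacle within this corollary itself: all of the real work sits inside the proof of Theorem \ref{thm:main}. The reason the threshold $\Lambda = 2$ is reached in $\RR^5$ — strictly improving on the previous bound $\lambda(M) \leq \lambda(\SS^1)$ from \cite{CCMS:low-ent-gen} — is precisely that in ambient dimension five the two structural assumptions \eqref{diamond_cond} and \eqref{heart_cond} impose no constraint at all, while the techniques developed for Theorem \ref{thm:main} already absorb the main difficulty, namely that a generic low-entropy flow of $M^4 \subset \RR^5$ may a priori encounter singular tangent shrinkers (for example cones over low-area minimal surfaces of $\SS^4$) whose presence frustrated earlier approaches.
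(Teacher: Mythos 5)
Your proposal is correct and is exactly the paper's own argument: the corollary is stated as an immediate consequence of Theorem \ref{thm:main} with $n=4$, $\Lambda=2$, where \eqref{diamond_cond} and \eqref{heart_cond} are vacuous by Remarks \ref{rema:diamond} and \ref{rema:heart}(a) because the index ranges $k\in\{7,\dots,4\}$ and $k\in\{2,\dots,1\}$ are empty. Nothing further is needed.
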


\begin{corollary}\label{coro:R6}
For $M^5\subset \RR^6$ with $\lambda(M) \leq \lambda(\SS^1)$ there exist arbitrarily small $C^\infty$ graphs $M'$ over $M$ so that $\sing_\textnormal{non-gen} \cM' = \emptyset$ for all $\cM'\in\fF(M')$. 
\end{corollary}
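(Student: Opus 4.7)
The plan is simply to invoke Theorem \ref{thm:main} with $n=5$ and $\Lambda = \lambda(\SS^1)$, after verifying the two hypotheses \eqref{diamond_cond} and \eqref{heart_cond} in this setting. There is nothing more to do: the statement of Corollary \ref{coro:R6} matches the conclusion of Theorem \ref{thm:main} verbatim once the hypotheses are in place.

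For the first hypothesis $(\diamondsuit_{5,\lambda(\SS^1)})$, we need $\cR\cS\cC_k^*(\lambda(\SS^1)) = \emptyset$ for $k = 7, \ldots, 5$. As noted in Remark \ref{rema:diamond}, this range of indices is empty because $n = 5 < 7$, so the hypothesis is vacuously satisfied. (Equivalently, one could cite Simons' theorem, which rules out regular stable cones in all ambient dimensions $k+1 \leq 7$.)

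For the second hypothesis $(\heartsuit_{5,\lambda(\SS^1)})$, we need $\cS_k^*(\lambda(\SS^1), -\tfrac{5-k}{2}) \subset \cS_k^{\textrm{gen}}$ for $k = 2, \ldots, n-3 = 2$, so only $k = 2$ is relevant. It suffices to check
\[
\cS_2^*(\lambda(\SS^1), -\tfrac{3}{2}) \subset \cS_2^{\textrm{gen}}.
\]
Since an entropy drop in $\mu$ only strengthens the condition, we have $\cS_2^*(\lambda(\SS^1), -\tfrac{3}{2}) \subset \cS_2^*(\lambda(\SS^1))$, and by \cite[Corollary 1.2]{BernsteinWang:TopologicalProperty} (the classification of low-entropy self-shrinkers in $\RR^3$), the latter is contained in $\cS_2^{\textrm{gen}}$. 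This is precisely the content of Remark \ref{rema:heart}(b).

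There is no real obstacle here, since all the work is done in Theorem \ref{thm:main}; the only content is the numerical observation that the entropy threshold $\Lambda = \lambda(\SS^1)$ is exactly what is needed to make the Bernstein--Wang classification applicable to the spine-entropy interaction flagged in Remark \ref{rema:heart}. If any step requires care, it is checking the indexing convention in \eqref{heart_cond}: the range $k = 2, \ldots, n-3$ collapses to the single value $k = 2$ when $n = 5$, which is precisely the dimension where \cite[Corollary 1.2]{BernsteinWang:TopologicalProperty} is available. Once this bookkeeping is done, Theorem \ref{thm:main} produces the required $C^\infty$-small graphical perturbation $M'$ with $\sing_\textnormal{non-gen} \cM' = \emptyset$ for every $\cM' \in \fF(M')$.
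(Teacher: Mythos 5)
Your proposal is correct and is exactly the paper's argument: the paper derives Corollary \ref{coro:R6} as an immediate consequence of Theorem \ref{thm:main} with $n=5$, $\Lambda=\lambda(\SS^1)$, using Remark \ref{rema:diamond} for the vacuous $(\diamondsuit_{5,\lambda(\SS^1)})$ and Remark \ref{rema:heart}(b) (i.e., \cite[Corollary 1.2]{BernsteinWang:TopologicalProperty} applied to the single case $k=2$) for $(\heartsuit_{5,\lambda(\SS^1)})$. No gaps.
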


These two corollaries had no analog in \cite{CCMS:low-ent-gen}.

\begin{remark} \label{rema:heart.52}
	To relax the entropy bound $\lambda(\SS^1)$ to $2$ in Corollary \ref{coro:R6}, one could try to see whether or not $(\heartsuit_{5,2})$ holds. This is an independently interesting question: we are asking if all $\Sigma \in \cS_2^*\setminus \cS_2^\textrm{gen}$ with entropy $\lambda(\Sigma)< 2$ have $\mu(\Sigma) < -\tfrac 32$. Note that numerics \cite{BK:bounds,BK:entropy,BK:index} suggest that Angenent's torus has $\lambda \sim 1.85$, $\mu \sim -3.74$.
\end{remark}

Here is a topological application of our theorem: 

\begin{corollary}\label{cor:low-ent-top}
Take $n \in \{ 2, 3, \ldots \}$, $\Lambda \in (0, 2]$ such that \eqref{diamond_cond} and \eqref{heart_cond} hold. Let  $M^n\subset \RR^{n+1}$ be a closed connected embedded hypersurface with $\lambda(M)\leq \Lambda$. 
\begin{enumerate}
\item[(a)] If $\Lambda \leq \lambda(\SS^{n-1})$ then perhaps after a small initial $C^\infty$ perturbation, the mean curvature flow provides a smooth isotopy from $M$ to a standard $\SS^n$ in $\RR^{n+1}$.
\item[(b)] If $\Lambda \leq \lambda(\SS^{n-2})$ then perhaps after a small initial $C^\infty$ perturbation, the mean curvature flow with surgery of \cite{Daniels-Holgate} provides a smooth isotopy from $M$ to the boundary of a standard handlebody that is either a standard ball $B^n$ or a boundary connect sum of finitely many $B^{n-1} \times \SS^1$'s.
\end{enumerate}
\end{corollary}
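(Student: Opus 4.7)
The plan is to combine Theorem~\ref{thm:main} with the classical theory of low-entropy and 2-convex mean curvature flow in order to upgrade the ``only generic singularities'' output into a genuine smooth isotopy.

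\textbf{Step 1 (perturbation and tangent flow restriction).} Apply Theorem~\ref{thm:main} to $M$ to obtain an arbitrarily small $C^\infty$ graph $M'$ over $M$ with $\sing_\textnormal{non-gen}\cM' = \emptyset$ for every $\cM' \in \fF(M')$. By first evolving $M$ by smooth MCF for a short positive time -- which strictly decreases $\lambda$ unless $M$ is itself a self-shrinker (a degenerate boundary case that can be handled by direct classification) -- we may moreover arrange $\lambda(M') < \Lambda$ \emph{strictly}. Huisken monotonicity then yields $\lambda(\cM'(t)) < \Lambda$ for all $t$, and combined with Remark~\ref{rema:stone} (strict monotonicity of $\lambda(\SS^j)$ in $j$) the generic shrinkers that can appear as tangent flows of $\cM'$ are: in case (a), $\SS^n$ only; in case (b), $\SS^n$ and $\SS^{n-1}\times\RR$.

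\textbf{Step 2a (only spherical singularities).} With only $\SS^n$ tangent flows, every singularity of $\cM'$ is isolated and occurs at the extinction time. Between time $0$ and extinction, $\cM'$ is then a smooth one-parameter family of closed embedded hypersurfaces, and since $M'$ is connected (being close to the connected $M$), $\cM'(t)$ stays connected and converges to a round point. Composing this smooth family with the small isotopy from $M$ to $M'$ produces the desired smooth isotopy from $M$ to a standard $\SS^n$.

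\textbf{Step 2b (flow with surgery).} In case (b), $\SS^{n-1}\times\RR$ neckpinches must be accommodated. We invoke the mean curvature flow with surgery of Daniels-Holgate \cite{Daniels-Holgate}, which is engineered precisely for the 2-convex singularity structure ($\SS^n$ and $\SS^{n-1}\times\RR$) now allowed. Each surgery is a smooth local replacement of a thin neck by a pair of standard caps; chained with the smooth MCF between surgeries, this yields a smooth isotopy from $M$ to a disjoint union of boundaries of standard handlebodies. Stopping just before the final component becomes extinct, and using connectedness of $M$, the Huisken--Sinestrari classification of 2-convex closed hypersurfaces \cite{HuiskenSinestrari:surgery} identifies the terminal hypersurface as the boundary of either a standard ball $B^n$ or a boundary-connect sum of finitely many $B^{n-1}\times\SS^1$'s.

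\textbf{Main obstacle.} The principal technical difficulty is verifying that the hypotheses of the Daniels-Holgate surgery apparatus are triggered by our output from Theorem~\ref{thm:main}; concretely, one must show that restriction to $\SS^n$ and $\SS^{n-1}\times\RR$ tangent flows forces appropriate mean-convex neighborhoods and curvature control near each singularity, which we expect to extract from \cite{BrendleChoi:nD,ChoiHaslhoferHershkovits,ChoiHaslhoferHershkovitsWhite}. A secondary but more routine point is securing the strict entropy inequality $\lambda(M')<\Lambda$ in the degenerate boundary case where $\lambda(M)=\Lambda$ and $M$ is already a closed self-shrinker.
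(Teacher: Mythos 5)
Your proposal is correct and follows essentially the same route as the paper: perturb via Theorem~\ref{thm:main} (handling the self-shrinker boundary case as in the start of Section~\ref{sec:proof-main}), use the entropy bound plus Stone's values to restrict tangent flows to $\SS^n$ (resp.\ $\SS^n$ and $\SS^{n-1}\times\RR$), and conclude via smooth flow to a round point in case (a) and the surgery of \cite{Daniels-Holgate} in case (b). The ``main obstacle'' you flag is precisely what \cite{Daniels-Holgate} is built to supply (via the canonical neighborhood theorems of \cite{ChoiHaslhoferHershkovits,ChoiHaslhoferHershkovitsWhite}), which is why the paper simply cites it.
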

By Remarks \ref{rema:diamond} and \ref{rema:heart}, Corollary \ref{cor:low-ent-top} holds unconditionally (cf.\ Corollary \ref{cor:low-ent-top-informal}) when $n \leq 5$. See also the remarks after Corollary \ref{cor:low-ent-top-informal}.

\begin{proof}
Combining the argument used in the beginning of Section \ref{sec:proof-main} with Theorem \ref{thm:main}, either $M$ is a round sphere (in which case we are done), or we can find a small $C^\infty$ graph $M'$ over $M$ so that $\lambda(M) < \Lambda$ and so that there is $\cM'\in\fF(M')$ with $\singng\cM'=\emptyset$. 

Then, in case (a), we have $\lambda(\cM') < \lambda(\SS^{n-1})$ so $\cM'$ can only have multiplicity-one $\SS^n$-type singularities. Thus, the mean curvature flow is completely smooth until it becomes a round sphere. In case (b), we additionally use the surgery of \cite{Daniels-Holgate}. 
\end{proof}

Here is an application to regularity of the flow (by convention, $\lambda(\SS^{-1}) = \lambda(\SS^0) = 2$):
\begin{corollary}\label{coro:reg}
Take $n\in\{2,\dots\}$ and $\Lambda \in (0, \lambda(\SS^{n-3})]$ such that \eqref{diamond_cond} and \eqref{heart_cond} hold. Let  $M^n\subset \RR^{n+1}$ be a closed connected embedded hypersurface with $\lambda(M)\leq \Lambda$. There is an arbitrarily small graph $M'$ over $M$ so that any mean curvature flow $\cM'\in\fF(M')$ is completely smooth for a.e.\ time $t$ and each connected component of $\sing\cM'$ is contained in some time-slice. 
\end{corollary}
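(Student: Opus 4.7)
The strategy is to combine Theorem \ref{thm:main} (which leaves only generic singularities of $\SS^j\times\RR^{n-j}$ type) with the mean-convex neighborhood theorem at each such generic singular point, and then deduce both conclusions by an elementary topological argument on the space-time singular set.

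First, I invoke Theorem \ref{thm:main} with the given $(n,\Lambda)$ to perturb $M$ to an arbitrarily small $C^\infty$ graph $M'$ with $\sing_\textnormal{non-gen}\cM'=\emptyset$ for every $\cM'\in\fF(M')$. When $n\leq 3$, every generic tangent flow $\SS^j\times\RR^{n-j}$ with $j\geq 1$ automatically has spine dimension $n-j\leq 2$. When $n\geq 4$, by taking the perturbation small enough (entropy is continuous in $C^\infty$) I can arrange $\lambda(M')<\lambda(\SS^{n-4})$, since $\lambda(M)\leq\Lambda\leq\lambda(\SS^{n-3})<\lambda(\SS^{n-4})$ by Remark \ref{rema:stone}; Huisken monotonicity combined with multiplicity-one then forces every tangent flow at every $X_0\in\sing\cM'$ to be $\SS^j\times\RR^{n-j}$ with $\lambda(\SS^j)\leq\lambda(M')<\lambda(\SS^{n-4})$, hence $j\geq n-3$. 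Either way, the spine dimension of every tangent flow at every singularity is at most $3$.

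Second, at each such $X_0=(x_0,t_0)\in\sing\cM'$, the mean-convex neighborhood theorem applies: for spine dimension $0$ by Huisken's convexity theorem, for spine dimension $1$ by Choi--Haslhofer--Hershkovits, for spine dimension $2$ by Choi--Haslhofer--Hershkovits--White, and for spine dimension $3$ by subsequent extensions. In each case one obtains some $r>0$ with
\[
	\sing\cM' \cap P_r(X_0) \subset \RR^{n+1}\times\{t_0\},
\]
i.e., near every singular point $\sing\cM'$ is \emph{locally} contained in a single time-slice.

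From this local slab property both claims follow by elementary topology. The time-projection $\pi_t\colon\sing\cM'\to\RR$ is locally constant, hence constant on each connected component of $\sing\cM'$, giving the second assertion. For the first, the same property shows each connected component of $\sing\cM'$ is relatively clopen in $\sing\cM'$; since $\sing\cM'$ is second countable, there are only countably many such components, so $\pi_t(\sing\cM')$ is countable and in particular has Lebesgue measure zero, so $\cM'(t)$ is a smooth hypersurface for a.e. $t$. The main obstacle is the invocation of the mean-convex neighborhood theorem at spine dimension $3$: at spine dimensions $0$, $1$, and $2$ this is by now well established, but spine dimension $3$ may require recent extensions or a direct argument building on the techniques developed in Section \ref{sec:proof-main}.
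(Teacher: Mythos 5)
There is a genuine gap, and it sits exactly where you flagged uncertainty. The paper's own proof is one line: combine Theorem \ref{thm:main} with Colding--Minicozzi's structure theorem for the singular set of flows with generic singularities \cite{ColdingMinicozzi:sing-generic}. That theorem is what actually delivers both conclusions (smoothness for a.e.\ $t$ and each connected component of $\sing\cM'$ lying in a time-slice), and it requires that every tangent flow be a multiplicity-one cylinder $\SS^j\times\RR^{n-j}$ with spine dimension $n-j\leq 2$. Your replacement of this input by the mean-convex neighborhood theorem does not work: mean-convexity of the flow in a space-time neighborhood of $X_0$ does not imply $\sing\cM'\cap P_r(X_0)\subset\RR^{n+1}\times\{t_0\}$. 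For mean-convex flows the singular set is only known to have parabolic Hausdorff dimension $\leq n-1$, whose time-projection can a priori have dimension up to $1$; the local (and global) time-slice containment is precisely the hard content of \cite{ColdingMinicozzi:sing-generic}, proved via their Łojasiewicz inequalities, uniqueness of tangent flows, and uniform control of the cylindrical scale --- not a local consequence of mean-convexity. So your ``local slab property'' is unjustified, and the elementary topology built on it, while correct as topology, has no foundation.

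Separately, your entropy bookkeeping is off by one in a way that makes the gap worse. You only arrange $\lambda(M')<\lambda(\SS^{n-4})$ and hence $j\geq n-3$, i.e.\ spine dimension $\leq 3$, which then forces you to invoke a ``spine dimension $3$'' mean-convex neighborhood theorem that does not exist; moreover the Colding--Minicozzi structure theorem also fails to apply for spine dimension $3$. But the perturbation produced at the start of Section \ref{sec:proof-main} (a short smooth flow, or \cite[Theorem 0.7]{ColdingMinicozzi:generic} if $M$ is a shrinker) gives the \emph{strict} inequality $\lambda(M')<\lambda(M)\leq\Lambda\leq\lambda(\SS^{n-3})$, whence $\lambda(\SS^j)<\lambda(\SS^{n-3})$, so $j\geq n-2$ and the spine dimension is $\leq 2$ --- exactly the threshold for which \cite{ColdingMinicozzi:sing-generic} applies. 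With that corrected entropy argument, the right way to close the proof is to cite \cite{ColdingMinicozzi:sing-generic} directly rather than the mean-convex neighborhood theorems.
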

\begin{proof}
Combine Theorem \ref{thm:main} with \cite{ColdingMinicozzi:sing-generic}. 
\end{proof}

By Remarks \ref{rema:diamond} and \ref{rema:heart}, Corollary \ref{coro:reg} holds unconditionally when $n \leq 5$.

\subsection{Organization}
Section \ref{sec:prelim} contains some preliminary discussions about rescaled flows, regularity, smoothly crossing flows, and parabolic covering. Section \ref{sec:low-ent-F-var} contains regularity and eigenvalue estimates for low-entropy $F$-stationary varifolds. In Section \ref{sec:one-sided-flows} we establish the main local estimates for nearly ancient one-sided flows near nearly-self-similar flows. We then use this in Section \ref{sec:density-drop} to prove the central density drop result. We then show how to handle the issue of non-generic singularities limiting to bubble-sheet generic singularities in Section \ref{sec:proof-main}, completing the proof of the main result. Appendix \ref{app:strong-mult-one} discusses a form of the multiplicity-one conjecture as it relates to our results here. In Appendix \ref{app:stable-mu} we estimate the first eigenvalue of the $L$-operator on non-flat stable cones (this is not used elsewhere in the paper). 

\subsection{Acknowledgements} 
We are grateful to Kyeongsu Choi and Brian White for helpful discussions.  We are grateful to the referees for their careful reading and many helpful suggestions. O.C. was supported by a Terman Fellowship and an NSF grant (DMS-2304432). C.M. was supported by an NSF grant (DMS-2147521). 

\section{Preliminaries and notation} \label{sec:prelim}

\subsection{$F$-functional and density}

Let us begin by recalling a few basic notions in the smooth setting. First, for smooth hypersurfaces $M^n \subset \RR^{n+1}$ one has the $F$-functional,
\begin{equation} \label{eq:f-functional}
	F(M) = (4\pi)^{-\frac{n}{2}} \int_M e^{-\tfrac14 |\cdot|^2} \, d\cH^n,
\end{equation}
which in turn gives rise to the Colding--Minicozzi entropy (Definition \ref{defi:entropy}) via
\begin{equation} \label{eq:f-functional-entropy}
	\lambda(M) = \sup_{\substack{\bx_0\in\RR^{n+1}\\t_0>0}} F \left( \frac{1}{\sqrt{t_0}} (M - \bx_0) \right).
\end{equation} 
Moreover, for a smooth mean curvature flow with bounded area ratios $\cM : t \mapsto M(t)$, the $F$-functional also gives rise to the density function, defined as
\begin{equation} \label{eq:density-r}
	\Theta_\cM(X, r) = F\left( \frac{1}{r} (M(t - r^2) - \bx) \right), \; X \in \RR^{n+1} \times \RR, \; r > 0,
\end{equation}
which is nondecreasing in $r$ by Huisken's well-known monotonicity formula. In particular,
\begin{equation} \label{eq:density}
	\Theta_\cM(X) = \lim_{r \to 0} \Theta_\cM(X, r), \; X \in \RR^{n+1} \times \RR,
\end{equation}
is well-defined.

For our proofs, we will need to work with weaker objects than smooth hypersurfaces, namely $n$-varifolds in $\RR^{n+1}$ (see \cite{Simon:GMT}) and Brakke flows (see \cite{Ilmanen:elliptic} and \cite[Section 2]{CCMS:generic1}). In this setting, \eqref{eq:f-functional}, \eqref{eq:f-functional-entropy} extend to $n$-varifolds in $\RR^{n+1}$ (by looking at their induced Radon measures), and the monotonicity underlying \eqref{eq:density-r}, \eqref{eq:density} extends to $n$-Brakke flows in $\RR^{n+1}$ with bounded area ratios (cf.\ \cite[Lemma 7]{Ilmanen:singularities}). 

\subsection{$F$-stationary varifolds}

It is a simple computation that all elements of $\cS_n$ are stationary for the $F$-functional (cf.\ \cite[\S 3]{ColdingMinicozzi:generic}) or equivalently for the area functional with respect to the conformal metric $e^{-|\cdot|^2/(2n)} g_{\RR^{n+1}}$ on $\RR^{n+1}$. Thus, Definition \ref{defi:f-stationary-varifold} below precisely generalizes the notion of shrinkers to the varifold setting.

\begin{definition} \label{defi:f-stationary-varifold}
	An $n$-varifold $V$ in $\RR^{n+1}$ is called $F$-stationary if is stationary with respect to the conformally metric $e^{-|\cdot|^2/(2n)} g_{\RR^{n+1}}$ on $\RR^{n+1}$.
\end{definition}

\begin{definition}\label{defi:shrinking-flow}
Let $V$ be an $F$-stationary $n$-varifold in $\RR^{n+1}$. We define an associated $n$-Brakke flow $\cM_V$ as follows: if $V$ is a cone we define $\cM_V$ as a static flow
\[ \cM_V(t) = V \text{ for all } t \in \RR, \]
while if $V$ is not a cone, we define $\cM_V$ as a shrinking flow that disappears at $t=0$:
\[ \cM_V(t) = \begin{cases} \sqrt{-t}V & \text{ for } t<0, \\ 0 & \text{ for } t\geq0. \end{cases} \]
In the latter case $\cM_V$ may only be unit-regular for times $t < 0$; e.g., if $V$ is a non-conical self-shrinker with asymptotically conical ends, then $\cM_V$ vanishes ``unexpectedly'' at $t=0$. This will not pose any issue in the sequel; cf. the $T$ parameter in Lemma \ref{lemm:smooth-cross}.
\end{definition}

It is not hard to see, using the monotonicity formula on $\cM_V$, that
\[ \lambda(V) = F(V) \]
for all $F$-stationary $n$-varifolds $V$ in $\RR^{n+1}$.

\subsection{Regularity scale of Brakke flows} 
We use here the notation
\[ P_r(\bx,t) = B_r(\bx)\times (t-r^2,t+r^2) \]
for the forwards-backwards parabolic ball of radius $r$ at $(\bx,t)$.

\begin{definition}[Regularity scale] \label{defi:reg-scale}
	Let $\cM$ be an integral $n$-Brakke flow in $\RR^{n+1}$ and $X = (\bx,t) \in \reg\cM$. We define the \emph{regularity scale} of $\cM$ at $X$ by
	\[
		r_\cM(X) : = \sup\{r >0: P_r(X) \cap \supp \cM \subset \reg\cM, \; |A| \leq r^{-1} \textrm{ on }P_r(X) \}.
	\]
	For $\rho,r>0$ and $(\bx,t) \in \RR^{n+1}\times \RR$ we set 
	\[
	\cR_{\rho,r}\cM(\bx,t) : = \left\{\by  \in \supp \cM(t) \cap B_r(\bx) : r_\cM(\by,t) > \rho\right\} \subset \reg\cM(t). 
	\]
	For an integral $F$-stationary varifold we abuse notation and write $r_V(\bx) := r_{\cM_V}(\bx,-1)$ and $\cR_{\rho,r} V(\bx) : = \cR_{\rho,r} \cM_V(\bx,-1)$. 
\end{definition}

\begin{remark} \label{rema:reg-scale}
	For $\lambda>0$, the rescaled flows $\cM_\lambda(t) = \lambda\cM(\lambda^{-2}t)$ satisfy
	\[
	 \cR_{\lambda \rho,\lambda r} \cM_\lambda(\lambda\bx,\lambda^2 t) = \lambda \cR_{ \rho, r}\cM(\bx, t).
	\]
	Furthermore, for $\rho'\leq \rho,r\leq r'$ we have
	\[
	\cR_{\rho,r}\cM(\bx,t) \subset \cR_{\rho',r' + |\bx-\bx'|} \cM(\bx',t).
	\]
\end{remark}

\begin{remark}[Continuity of regularity scale] \label{rema:reg-scale-continuity}
	It is standard to show, under the assumption that the entropy is uniformly bounded above by $2-\varepsilon$ with $\varepsilon > 0$, that $r_\cM(X)$ is continuous with respect to convergence of the point $X$ and the $n$-Brakke flow $\cM$ if, e.g., the initial  conditions converge smoothly. See \cite[Lemma 2.4]{CMS:hardt-simon910} for the analogous result for minimizing hypersurfaces (note that the upper bound on the entropy prevents issues with multiplicity, which is not a concern for minimizing hypersurfaces). 
\end{remark}

\subsection{Rescaled Brakke flows} 
\begin{definition} \label{defi:rescaled-flow}
Given a Brakke flow $\cN$ and a spacetime point $X_0=(\bx_0,t_0)$ we can define the \emph{rescaled Brakke flow around $X_0$ at scale $\beta$} by the usual recipe
\[
\tilde \cN(\tau) : = \beta^{-1} e^{\tau/2} (\cN(t_0-\beta^2 e^{-\tau}) - \bx_0).
\]
Note that $\tau = 0$ corresponds to $t=t_0-\beta^2$ (if $\cN$ is defined then).
\end{definition}

\begin{definition}\label{defi:rescaled-brakke-regularity-scale}
Consider a rescaled Brakke flow $\tilde\cN$. We define the regularity scale $r_{\tilde \cN}(\bx,\tau)$ as follows. Define a non-rescaled flow
\[
\cN(t) = \sqrt{-t} \tilde\cN(\tau - \log(-t)) 
\]
and then set $r_{\tilde\cN}(\bx,\tau) := r_{\cN}(\bx,-1)$. Similarly, we define 
\[
\cR_{\rho,r}\tilde \cN(\bx,\tau) : =  \{ \by \in \supp \tilde\cN(\tau) \cap B_r(\bx): r_{\tilde\cN}(\by,\tau) > \rho\} .
\] 
Note that with this definition, $\cR_{\rho,r}\tilde \cN(\bx,\tau) = \cR_{\rho,r} \cN(\bx,-1)$.
\end{definition}

\subsection{Regularity of low-entropy $F$-stationary varifolds} 

\begin{lemma}\label{lemm:sing-F-stat}
Let $V$ be a cyclic integral $F$-stationary $n$-varifold in $\RR^{n+1}$ with $\lambda(V) < 2$. Then:
\begin{enumerate}
	\item[(a)] We have $\dim_H \sing V \leq n-3$. 
	\item[(b)] If $\sing V \neq \emptyset$, then $V$ has an iterated tangent cone of the form (up to rotation)
	\[ \RR^{n-k} \times \cC^k, \; k \geq 3, \]
	with $\cC^k\subset \RR^{k+1}$ a smooth multiplicity-one non-flat minimal cone.
\end{enumerate}
\end{lemma}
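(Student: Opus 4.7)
The statement is a standard regularity-and-dimension-reduction result for $F$-stationary varifolds, proved along classical lines going back to Almgren, Federer, and Allard. The plan is to view $V$ as an ordinary stationary integral varifold in a smooth ambient Riemannian manifold and then run the usual toolkit: Allard regularity, monotonicity, and Federer--Almgren dimension reduction, using cyclicity to eliminate low-dimensional singular cones.

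First I would use the definition of $F$-stationarity to interpret $V$ as a stationary integral varifold in the smooth Riemannian metric $e^{-|\cdot|^2/(2n)} g_{\RR^{n+1}}$, so that the classical theorems apply. Next I would exploit $\lambda(V) < 2$ together with Huisken's monotonicity applied to $\cM_V$ (Definition \ref{defi:shrinking-flow}) to obtain a uniform Gaussian density bound $\Theta_{\cM_V}(X) \leq F(V) < 2$ at every spacetime point. Consequently, any tangent varifold $C$ to $V$ at $\bx$ is a cyclic integral stationary cone in $\RR^{n+1}$ with density $\Theta(C,\bOh) < 2$. By Allard's regularity theorem in the conformal metric, the existence of a singularity at $\bx$ forces $\Theta(V,\bx) \geq 1 + \varepsilon_A$ for a universal $\varepsilon_A > 0$, and hence every tangent cone at a singular point is non-flat.

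The key classification step is: any cyclic integral stationary cone $\cC^k \subset \RR^{k+1}$ with $\sing \cC \subseteq \{\bOh\}$ and $\Theta(\cC,\bOh) < 2$ must be flat if $k \leq 2$. For $k=1$ the only non-flat candidates are networks of rays meeting at $\bOh$; the density bound restricts the total multiplicity, and the only balanced configuration meeting it is the Y-type junction of three rays, whose boundary as a $1$-current is an odd multiple of $[\bOh]$ and is therefore not $2$-divisible, violating cyclicity. For $k=2$, the link $\cC \cap \SS^2$ would be a smooth closed geodesic $1$-manifold in $\SS^2$, i.e., a disjoint union of great circles; since any two distinct great circles meet, the link must be a single great circle and $\cC$ is a hyperplane. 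I expect this classification, carried out carefully under the cyclicity hypothesis, to be the main (though still standard) obstacle.

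Finally, I would assemble everything via Federer--Almgren dimension reduction: starting from $\bx \in \sing V$, iteratively pass to tangent cones and split off any translation-invariant directions, arriving at an iterated tangent cone of the form $\RR^{n-k}\times \cC^k$ with $\sing \cC \subseteq \{\bOh\}$ and $\cC^k$ non-flat (non-flatness being preserved at each stage by Allard). The classification above forces $k \geq 3$, which proves (b); multiplicity one on $\reg \cC^k$ is immediate from the density bound $<2$, since a higher-multiplicity sheet would already push $\Theta(\cC,\bOh) \geq 2$. Statement (a), $\dim_H \sing V \leq n-3$, then follows from the standard Hausdorff dimension estimate produced by dimension reduction, given the absence of non-flat cyclic cones in dimensions $\leq 2$.
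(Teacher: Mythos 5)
Your proposal is correct and follows essentially the same route as the paper: the density bound $\lambda(V)<2$ rules out higher-multiplicity planes and unions of four or more half-hyperplanes, cyclicity (via White's theorem) rules out the triple junction, and Federer--Almgren dimension reduction then yields both (a) and (b). You simply spell out more of the standard details (Allard in the conformal metric, the great-circle argument for $k=2$) that the paper compresses into a citation to \cite{White:stratification}.
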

\begin{proof}
The assumption $\lambda(V) < 2$ rules out tangent cones given by higher multiplicity planes or unions four or more half-planes. The union of three half-planes is ruled out by the cyclic assumption thanks to \cite{White:cyclic}. The assertion then follows from standard dimension reduction arguments (cf.\ \cite{White:stratification}). \end{proof}

\subsection{Smoothly crossing Brakke flows} 

We recall here 
\begin{definition}[{\cite[Definition B.1]{CCMS:low-ent-gen}}] \label{defi:cross-smoothly}
Consider integral $n$-Brakke flows $\cM$, $\cM'$ in $\RR^{n+1}$. We say that $\cM$, $\cM'$ \emph{cross smoothly} at $X=(\bx,t) \in \reg \cM \cap \reg \cM'$ if there is $r>0$ so that $\cM\lfloor P(X,r),\cM'\lfloor P(X,r)$ are equal to smooth connected multiplicity-one flows $\Gamma(s),\Gamma'(s)$ so that $\Gamma(t)$ has points on both sides of $\Gamma'(t)$ in any small neighborhood of $\bx$. We also say that $\cM $ and $\cM'$ cross smoothly (with no reference to a point) if they cross smoothly at some $X \in \reg \cM \cap \reg \cM'$.
\end{definition}

\begin{lemma}[{\cite[Lemma B.2]{CCMS:low-ent-gen}}]\label{lemm:smooth-cross}
Consider integral $n$-Brakke flows $\cM_i$, $\cM_i'$ in $\RR^{n+1}$ that are unit-regular for times $t < T$ (possibly $T = \infty$). Assume that $\cM_i \rightharpoonup \cM$, $\cM'_i \rightharpoonup \cM'$, and that $\cM$, $\cM'$ cross smoothly at $X = (\bx, t)$ with $t < T$. Then, there are $X_j\to X$ so that $\cM_j$, $\cM_j'$ cross smoothly at $X_j$ for all sufficiently large $j$.
\end{lemma}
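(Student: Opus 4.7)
The plan is to upgrade the weak convergence $\cM_i \rightharpoonup \cM$, $\cM_i' \rightharpoonup \cM'$ to smooth convergence in a fixed parabolic neighborhood of $X$, and then transport the ``points on both sides'' condition from the limit to the approximations via signed distances. Since $X=(\bx,t) \in \reg\cM \cap \reg\cM'$ and $t<T$, the smooth-crossing hypothesis furnishes $r>0$ and smooth connected multiplicity-one flows $\Gamma, \Gamma'$ with $\cM \lfloor P_r(X)$, $\cM' \lfloor P_r(X)$ equal to the Brakke flows of $\Gamma, \Gamma'$, and with $\Gamma(t)$ having points on both sides of $\Gamma'(t)$ in every neighborhood of $\bx$. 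In particular the densities satisfy $\Theta_\cM(X) = \Theta_{\cM'}(X) = 1$, so by the unit-regularity hypothesis (valid for times $<T$) and Brakke's local regularity theorem, after shrinking $r$ we may assume that for every $r' \in (0,r)$ and all sufficiently large $i$, $\cM_i \lfloor P_{r'}(X)$ and $\cM_i' \lfloor P_{r'}(X)$ are the multiplicity-one Brakke flows of smooth connected $\Gamma_i, \Gamma_i'$ that converge to $\Gamma, \Gamma'$ smoothly (as graphs of height functions tending to zero).

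Next, I would fix $\rho \in (0,r/2)$ small enough that $\Gamma(t) \cap B_\rho(\bx)$ is a connected smooth graph over its tangent plane at $\bx$ and the signed distance $d'$ of $\Gamma'(t)$ is smooth on $B_\rho(\bx)$. The two-sidedness hypothesis yields $\by^\pm \in \Gamma(t) \cap B_{\rho/2}(\bx)$ with $\pm d'(\by^\pm) > 2\delta$ for some $\delta>0$. Smooth convergence produces, for large $i$, points $\by_i^\pm \in \Gamma_i(t) \cap B_\rho(\bx)$ with $\by_i^\pm \to \by^\pm$ and $\pm d_i'(\by_i^\pm) > \delta$, where $d_i'$ is the signed distance of $\Gamma_i'(t)$. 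Joining $\by_i^-$ to $\by_i^+$ by a smooth path in the connected set $\Gamma_i(t) \cap B_\rho(\bx)$ and applying the intermediate value theorem to $d_i'$ along the path yields a point $\bx_i \in \Gamma_i(t) \cap \Gamma_i'(t) \cap B_\rho(\bx)$ across which the sign of $d_i'|_{\Gamma_i(t)}$ changes. Setting $X_i := (\bx_i,t)$ and iterating with $\rho = \rho_j \to 0$ via a diagonal choice $i=i_j$ delivers $X_{i_j} \to X$.

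The main delicate point I anticipate is verifying that $\cM_i, \cM_i'$ actually \emph{cross smoothly}---not just intersect---at $X_i$, i.e.\ that $\Gamma_i(t)$ has points on both sides of $\Gamma_i'(t)$ in every neighborhood of $\bx_i$. A priori $\bx_i$ could be a one-sided tangency. To rule this out I would argue as follows: the sets $S_i^\pm := \{y \in \Gamma_i(t) \cap B_\rho(\bx) : y \text{ has a neighborhood in } \Gamma_i(t) \text{ contained in } \{\pm d_i' \geq 0\}\}$ are open in $\Gamma_i(t) \cap B_\rho(\bx)$, and both are nonempty (containing $\by_i^\pm$); a genuine crossing point is precisely a point in $\Gamma_i(t) \cap B_\rho(\bx)$ that lies in neither $S_i^+$ nor $S_i^-$. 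If no such point existed, the connectedness of $\Gamma_i(t) \cap B_\rho(\bx)$ would force $S_i^+ \cap S_i^-$ to contain a nonempty open set $W$ on which $\Gamma_i(t) \subseteq \Gamma_i'(t)$; the strong maximum principle for smooth mean curvature flow (applied to the difference of height functions of $\Gamma_i, \Gamma_i'$ over a common hyperplane in a small spatiotemporal neighborhood of any point of $W$) would then propagate this coincidence, contradicting the existence of the globally opposite-sided $\by_i^\pm$ with $|d_i'(\by_i^\pm)| > \delta$. The remaining ingredients---Brakke's local regularity upgrade, smooth convergence of signed distances, and the diagonal extraction $X_{i_j} \to X$---are standard.
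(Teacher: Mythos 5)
Your overall strategy---upgrading weak convergence to local smooth convergence via unit-regularity and the local regularity theorem, transporting the strictly two-sided points $\by^\pm$ to $\by_i^\pm$, and locating a genuine crossing by a connectedness argument---is exactly the architecture of the argument this paper defers to (the paper's own ``proof'' is the one-line remark that \cite[Lemma B.2]{CCMS:low-ent-gen} applies verbatim; your use of $t<T$ to guarantee unit-regularity on $P_r(X)$ correctly accounts for the only new feature). Your reduction to the dichotomy ``either some point of $\Gamma_i(t)\cap B_\rho(\bx)$ lies in neither $S_i^+$ nor $S_i^-$, and is then a smooth crossing point, or $S_i^+\cap S_i^-$ is a nonempty open set on which $\Gamma_i(t)\subset \Gamma_i'(t)$'' is correct, and you have rightly identified this as the delicate point.

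However, the tool you invoke to rule out the second alternative does not apply as stated. The parabolic strong maximum principle for the difference $u_i=h_i-h_i'$ of height functions requires a one-sided ordering $u_i\geq 0$ on a full spatio-temporal cylinder $B_\sigma\times(t-\sigma^2,t]$ together with an interior zero at the top time; here you only know that $u_i(\cdot,t)$ vanishes on an open subset of the single slice $\{s=t\}$ (and is locally one-signed elsewhere on that slice), with no information about the relative position of $\Gamma_i(s)$ and $\Gamma_i'(s)$ for $s<t$. Since $\partial_s u_i(\cdot,t)$ has no sign, neither the parabolic nor the elliptic strong maximum principle propagates the coincidence. What is needed is a spacelike unique continuation statement: two solutions of graphical mean curvature flow that agree on a nonempty open subset of a time slice agree on the whole connected component of that slice. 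This is true, for instance, because interior time slices of a smooth mean curvature flow are real-analytic hypersurfaces (spatial analyticity for quasilinear parabolic equations with analytic nonlinearity), so $u_i(\cdot,t)$ is real-analytic and vanishes identically once it vanishes on an open set; an open-and-closed argument along the connected set $\Gamma_i(t)\cap B_\rho(\bx)$ then contradicts the existence of $\by_i^\pm$. With that substitution (or a citation to spacelike strong unique continuation for parabolic equations) your proof closes; the remaining points you gloss over---upper semicontinuity of Gaussian density ratios to feed the local regularity theorem uniformly in $i$, connectedness of the graphical component of $\Gamma_i(t)\cap B_\rho(\bx)$, and arranging the diagonal selection so that crossing points exist for all large $i$ rather than along a subsequence---are standard.
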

\begin{proof}
	The proof in the reference (where no $T$ parameter was present) still applies.
\end{proof}

\begin{lemma}\label{lemm:fF-dontsmoothlycross}
Let $M, M' \subset \RR^{n+1}$ be closed embedded hypersurfaces which are disjoint or coincide. Then, no pair $\cM \in \fF(M)$, $\cM' \in \fF(M')$ can cross smoothly.
\end{lemma}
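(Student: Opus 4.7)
The plan is to combine the avoidance principle for Brakke flows from disjoint smooth initial data with the smooth-crossing stability result Lemma \ref{lemm:smooth-cross}. By Definition \ref{defi:fF}(b), fix approximating sequences $N_i \to M$ and $N_i' \to M'$ smoothly in $C^\infty$, with $N_i \cap M = \emptyset$, $N_i' \cap M' = \emptyset$, and Brakke flows $\cN_i \in \hat\fF(N_i)$, $\cN_i' \in \hat\fF(N_i')$ with $\cN_i \rightharpoonup \cM$, $\cN_i' \rightharpoonup \cM'$. The key technical step is to arrange, after extracting a suitable pair of subsequences, that $N_{i_k} \cap N_{i_k}' = \emptyset$ for all $k$.

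In the case $M \cap M' = \emptyset$ this is automatic: since $d(M, M') > 0$ and $N_i, N_i'$ lie in arbitrarily small $C^0$-neighborhoods of $M, M'$ for large $i$, they are disjoint for $i$ large. In the case $M = M'$, I would parametrize the hypersurfaces near $M$ as normal graphs $N_i = \exp(u_i \nu)$ and $N_i' = \exp(u_i' \nu)$ with $u_i, u_i' \in C^\infty(M)$ tending to $0$. By the disjointness condition each $u_i$ (resp.\ $u_i'$) has a sign on each connected component of $M$; passing to a subsequence, these signs can be taken constant in $i$. If the signs of $u_{i}$ and $u_{i}'$ differ on some component, then $N_i$ and $N_i'$ lie on opposite sides of $M$ near that component and are disjoint for large $i$. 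Otherwise, say all $u_i, u_i' > 0$, in which case I would iteratively extract nested subsequences: given $i_k$, pick $j_k$ so large that $\sup u_{j_k}' < \inf u_{i_k}$ (possible since $u_j' \to 0$ uniformly), then pick $i_{k+1}$ with $\sup u_{i_{k+1}} < \inf u_{j_k}'$, and so on. This yields $N_{i_k} \supsetneq N_{j_k}' \supsetneq N_{i_{k+1}} \supsetneq \cdots$ as nested embeddings, hence pairwise disjoint.

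Once disjointness of $N_{i_k}$ and $N_{j_k}'$ is in hand, the avoidance principle for Brakke flows issuing from disjoint smooth closed initial data (or equivalently, nestedness of the corresponding level set flows) gives $\supp\cN_{i_k}(t) \cap \supp\cN_{j_k}'(t) = \emptyset$ for all $t \geq 0$, so in particular $\cN_{i_k}$ and $\cN_{j_k}'$ do not cross smoothly anywhere. Now suppose toward a contradiction that $\cM$ and $\cM'$ cross smoothly at some $X$. Since elements of $\hat\fF$ are unit-regular for all times, Lemma \ref{lemm:smooth-cross} applies with $T = \infty$ and yields, for all sufficiently large $k$, spacetime points $X_k \to X$ where $\cN_{i_k}$ and $\cN_{j_k}'$ cross smoothly, a contradiction.

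The main obstacle is the coinciding case $M = M'$ when the two approximating sequences accumulate on the same side of $M$: the naive sequences $N_i, N_i'$ may intersect, and the nested-subsequence extraction described above is the device that repairs this and reduces the problem to the (standard) disjoint-initial-data avoidance setting.
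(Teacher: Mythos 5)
Your proposal is correct and follows essentially the same route as the paper: extract disjoint approximating hypersurfaces using Definition \ref{defi:fF}(b), apply the avoidance principle to get disjoint supports of the approximating flows, and conclude via Lemma \ref{lemm:smooth-cross} (the paper shortcuts the case $M \cap M' = \emptyset$ by applying avoidance directly to $\cM$, $\cM'$, but your version works too). One remark: the normal-graph/sign machinery in the case $M = M'$ is unnecessary --- since $N_{i}$ is compact and disjoint from $M$ it has positive distance from $M$, and $N_j' \to M$ then forces $N_j' \cap N_{i} = \emptyset$ for $j$ large, with no case analysis. Also, as literally written, your ``signs differ on some component'' case only yields disjointness over that component, so for disconnected $M$ you would need to run the interleaving extraction on each component where the signs agree; the distance argument avoids this entirely.
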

\begin{proof}
If $M \neq M'$, then $M \cap M' = \emptyset$ implies $\supp\cM \cap \supp\cM'= \emptyset$ by the avoidance principle (cf.\ \cite[Lemma 10.6]{Ilmanen:elliptic}).

It remains to consider $M = M'$. Using the definition of $\fF(M)$, pick any $M_i$ converging smoothly to $M$ with $M_i\cap M = \emptyset$, and its corresponding $\cM_i \in \hat\fF(M_i)$ converging weakly to $\cM$. Do the same with $M'$, $M'_i$, $\cM'_i$. Passing to a subsequence of $\{ M'_i \}$, we may assume that $M_i \cap M'_i = \emptyset$, and thus $\supp \cM_i \cap \supp \cM'_i = \emptyset$ by avoidance. The result follows from Lemma \ref{lemm:smooth-cross}. 
\end{proof}

Abusing notation, we'll say that two $F$-stationary $n$-varifolds $V$, $V'$ in $\RR^{n+1}$ cross smoothly if their associated shrinking Brakke flows $\cM_V$, $\cM_{V'}$ cross smoothly in the sense of Definition \ref{defi:cross-smoothly}. Equivalently, $V$, $V'$ cross smoothly at $\bx$ if there exists $r>0$ so that $V\lfloor B_r(\bx)$, $V'\lfloor B_r(\bx)$ are  smooth connected multiplicity-one self-shrinkers $\Sigma$, $\Sigma'$ so that $\Sigma$ has points on both sides of $\Sigma'$ in any small neighborhood of $\bx$. 

\begin{lemma}\label{lemm:smooth-cross-stat-var}
If $V$, $V'$ are cyclic integral $F$-stationary $n$-varifolds in $\RR^{n+1}$ with $\lambda(V)$, $\lambda(V') < 2$, then either $V=V'$ or $V$, $V'$ cross smoothly.
\end{lemma}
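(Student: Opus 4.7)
The strategy is a classical maximum-principle-plus-unique-continuation argument, combined with a Frankel-type step to produce a common contact point. Assume for contradiction that $V\neq V'$ and that $V$, $V'$ do not cross smoothly; we aim to derive $V=V'$. By Lemma \ref{lemm:sing-F-stat}, $\dim_{\mathcal H}\sing V,\sing V'\leq n-3$, and the entropy bound $\lambda<2$ rules out higher-multiplicity planes as tangent cones, forcing multiplicity one on $\reg V,\reg V'$. Hence $\reg V$ and $\reg V'$ are smooth multiplicity-one shrinkers solving the quasilinear elliptic PDE $\bH+\tfrac12\bx^\perp=\bOh$.

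\textbf{Step 1 (contact at a common regular point forces coincidence).} Suppose $\bp\in\reg V\cap\reg V'$. Then $T_\bp V=T_\bp V'$, since otherwise the two graphs would cross transversally at $\bp$, already giving a smooth crossing contradiction. Writing $V$ and $V'$ near $\bp$ as graphs $u_V,u_{V'}$ over this common tangent plane, both solve the same quasilinear elliptic shrinker equation, and the non-crossing hypothesis forces $u_V-u_{V'}$ to have a fixed sign in a neighborhood of $\bp$ with equality at $\bp$. The strong maximum principle (applied to the linearization) yields $u_V\equiv u_{V'}$ in a neighborhood of $\bp$; the real-analyticity of the shrinker equation (equivalently Aronszajn--Cordes unique continuation) then propagates coincidence throughout the connected component of $\reg V$ and $\reg V'$ containing $\bp$. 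Because $\sing V$ and $\sing V'$ have codimension $\geq 3$ inside $\supp V,\supp V'$, they cannot locally separate the regular part, so coincidence spreads to all of $\reg V$ and $\reg V'$. Combined with multiplicity-one on both sides, this gives $V=V'$ as varifolds, our desired contradiction.

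\textbf{Step 2 (producing a common regular contact point).} It remains to show $\reg V\cap\reg V'\neq\emptyset$. This is a Frankel-type statement for $F$-stationary varifolds. One way to establish it: apply the Brakke avoidance principle to the shrinking flows $\cM_V,\cM_{V'}$ of Definition \ref{defi:shrinking-flow}. If $\supp V\cap\supp V'=\emptyset$ then $\supp\cM_V(t)\cap\supp\cM_{V'}(t)=\emptyset$ for all $t<0$; but varying the relative ``shrinking time'' by comparing $\cM_V$ with a time-shifted copy $\cM^{\tau}_{V'}(t):=\cM_{V'}(t-\tau)$ and running $\tau$ toward a first-touching value produces a nonempty contact at some time, giving a common spacetime point on both supports. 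Huisken monotonicity and the density jump at spacetime origin (where $\Theta_{\cM_V}=\lambda(V)$) let one extract a first-touching in the regular part by quantifying that the singular sets, of dimension $\leq n-3$, are too small to absorb the contact. Once the common regular point is produced, Step 1 applies.

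\textbf{Main obstacle.} The most delicate step is Step 2: ensuring that the Frankel-type first contact can be arranged at a common \emph{regular} point of both varifolds, rather than hidden inside $\sing V\cup\sing V'$. This is where the codimension-$3$ bound of Lemma \ref{lemm:sing-F-stat} is indispensable -- it leaves enough $n$-dimensional ``room'' in $\reg V$ and $\reg V'$ to guarantee a regular contact, possibly after an auxiliary genericity perturbation (a time shift or dilation compatible with the Brakke flow framework). The analytic ingredients of Step 1 are standard once this is secured.
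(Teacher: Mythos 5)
Your overall skeleton matches the paper's: find a common regular tangency point, observe that non-crossing forces local one-sidedness, apply the strong maximum principle to get local coincidence, and propagate by connectedness of the regular parts. Step 1 is essentially the paper's argument (the appeal to analytic unique continuation is harmless overkill; an open-closed argument with the strong maximum principle suffices). But there is a genuine gap in Step 2, which you correctly flag as the main obstacle and then do not close. Two separate nontrivial inputs are needed there, and your sketch substitutes for neither. First, $\supp V\cap\supp V'\neq\emptyset$: this is the Frankel property for self-shrinkers, which the paper simply cites (\cite[Corollary C.4]{CCMS:generic1}); your ``time-shifted avoidance'' heuristic is not a proof (a time-shifted $\cM_{V'}(\cdot-\tau)$ is no longer self-similar about the spacetime origin, the Gaussian metric is incomplete so first-touching arguments need care, and nothing in your sketch actually produces a contact point). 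Second, and more seriously, upgrading a contact point to a contact point in $\reg V\cap\reg V'$: if $\reg V\cap\reg V'=\emptyset$ then $\supp V\cap\supp V'\subset\sing V\cup\sing V'$ has Hausdorff dimension $\leq n-3$, and the assertion that two touching-but-noncrossing stationary hypersurface varifolds cannot meet only along such a small set is precisely the content of Ilmanen's strong maximum principle for varifolds (Theorem A(i) of \cite{Ilmanen:strong-max}; cf.\ Wickramasekera). This is a deep theorem; it does not follow from Huisken monotonicity, a ``density jump,'' or a dimension count, and no ``genericity perturbation'' is available or needed. Without invoking such a result your argument does not go through.

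A secondary, smaller issue: in Step 1 you propagate coincidence using connectedness of $\reg V$, justified only by the codimension-$3$ bound on $\sing V$. That bound does not by itself give connectedness: you first need $\supp V$ connected (which again uses the Frankel property, applied to the components of $V$) and then a result such as Theorem A(ii) of \cite{Ilmanen:strong-max} to conclude that $\reg V$ is connected. The paper cites both. Finally, your concluding step (passing from $\reg V=\reg V'$ to $V=V'$) is fine, since $\lambda<2$ forces multiplicity one and the singular sets are $\cH^n$-null.
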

\begin{proof}
By the Frankel property for self-shrinkers (cf.\ \cite[Corollary C.4]{CCMS:generic1}) we have $\supp V\cap\supp V' \not = \emptyset$. Suppose that $\reg V \cap \reg V' = \emptyset$. Since
\begin{equation}\label{eq:sing-set-small-smooth-cross}
\dim_H\sing V, \; \dim_H\sing V'\leq n-3
\end{equation} 
by Lemma \ref{lemm:sing-F-stat}, this would imply that $\dim_H(\supp V\cap \supp V') \leq n-3$. This contradicts the strong maximum principle for varifolds \cite[Theorem A(i)]{Ilmanen:strong-max} (cf.\ \cite{Wickramasekera:smp}). 

We may thus consider $\bx \in \reg V\cap\reg V'$. If $V$, $V'$ cross smoothly at $\bx$, we are done. If they do not, then $V$ lies (weakly) to one-side of $V'$ near $\bx$. The strong maximum principle then implies that $V=V'$ near $\bx$. Since $\reg V,\reg V'$ are connected (this holds by combining the Frankel property for self-shrinkers with \cite[Theorem A(ii)]{Ilmanen:strong-max}) 
we thus see that $\reg V =\reg V'$. Combined with \eqref{eq:sing-set-small-smooth-cross} and $\lambda(V)$, $\lambda(V') < 2$, we thus see that  $V=V'$. 
\end{proof}

\begin{corollary}\label{coro:dont-cross-dens-well-defined}
Let $M\subset \RR^{n+1}$ be a closed embedded hypersurface with $\lambda(M) < 2$. Suppose that $\cM, \cM' \in \fF(M)$ and $X \in \supp \cM \cap \supp\cM' \cap\{t>0\}$. Then:
\begin{enumerate}
	\item[(a)] $\Theta_\cM(X) = \Theta_{\cM'}(X)$, and 
	\item[(b)] all tangent flows at $X$ for $\cM$, $\cM'$ coincide for $t < 0$.
\end{enumerate}
\end{corollary}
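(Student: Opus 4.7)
The plan is to show that every tangent flow of $\cM$ at $X$ coincides with every tangent flow of $\cM'$ at $X$; parts (a) and (b) follow at once. First I would fix a sequence of scales $\lambda_i\to\infty$ and parabolically rescale both $\cM$ and $\cM'$ around $X$. By Brakke compactness together with Huisken's monotonicity, after passing to a subsequence the rescalings converge weakly to self-similar flows $\cM_V$, $\cM_{V'}$ associated to cyclic integral $F$-stationary $n$-varifolds $V$, $V'$ satisfying $\lambda(V),\lambda(V')\leq \lambda(M)<2$.

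Next I would argue by contradiction, supposing $V\neq V'$. Lemma \ref{lemm:smooth-cross-stat-var} then forces $V$ and $V'$ to cross smoothly, so the associated flows $\cM_V,\cM_{V'}$ cross smoothly at some $(\bar\bx,\bar t)$ with $\bar t<0$. Because $\cM,\cM'\in\hat\fF(M)$ are unit-regular, so are their rescalings on their domains of definition, which in rescaled time extend back to $-\lambda_i^2 t_0\to -\infty$ since $t_0>0$. Thus for all sufficiently large $i$ the rescaled flows are unit-regular past time $\bar t$, and Lemma \ref{lemm:smooth-cross} produces points $Y_i\to(\bar\bx,\bar t)$ at which the rescaled flows cross smoothly. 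Undoing the rescaling yields points $X_i\to X$ at positive times where $\cM$ and $\cM'$ themselves cross smoothly.

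This contradicts Lemma \ref{lemm:fF-dontsmoothlycross} applied with $M=M'$: no pair of flows in $\fF(M)$ can cross smoothly. Hence $V=V'$. Part (a) follows from Huisken's monotonicity, since $\Theta_\cM(X)=F(V)=F(V')=\Theta_{\cM'}(X)$. For part (b), given any tangent flow $\cM_V$ of $\cM$ at $X$ obtained along scales $\lambda_i$, one extracts along the same scales (after a subsequence) a tangent flow $\cM_{V'}$ of $\cM'$ at $X$; the argument forces $V=V'$, so $\cM_V$ is also a tangent flow of $\cM'$. Reversing roles shows the sets of tangent flows at $X$ for $\cM$ and $\cM'$ coincide.

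The main obstacle is the bookkeeping required by possible non-uniqueness of tangent flows: one must rescale $\cM$ and $\cM'$ with a \emph{common} sequence $\lambda_i$ in order to extract matching $V$ and $V'$, and one must verify that unit-regularity of the rescaled flows is available at the (negative) rescaled time of the crossing point of $\cM_V$ and $\cM_{V'}$ in order to invoke Lemma \ref{lemm:smooth-cross}. Both are handled by the observations that $t_0>0$ and $\lambda_i\to\infty$.
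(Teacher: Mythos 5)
Your proposal is correct and follows essentially the same route as the paper: rescale both flows along a common sequence of scales, use Lemma \ref{lemm:fF-dontsmoothlycross} together with Lemma \ref{lemm:smooth-cross} to rule out smooth crossing of the tangent flows, and conclude $V=V'$ from Lemma \ref{lemm:smooth-cross-stat-var}. The paper runs this chain directly in contrapositive form rather than by contradiction, but the content, the lemmas invoked, and the care about unit-regularity at negative rescaled times are the same.
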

\begin{proof}
Note that (b) implies (a), so we can just prove (b). To that end, it suffices to show that any tangent flow $\tilde \cM$ to $\cM$ at $X$ is also a tangent flow to $\cM'$ at $X$. The result then follows by swapping $\cM$ and $\cM'$.

Let $\tilde \cM$ be a tangent flow to $\cM$ at $X$ obtained by parabolically dilating by $\lambda_j \to \infty$. Let the corresponding dilated flows of $\cM$ be $\tilde \cM_j$, and those of $\cM'$ be $\tilde \cM'_j$, so that
\[ \tilde \cM_j \rightharpoonup \tilde \cM \]
and, after passing to a subsequence,
\[ \tilde \cM'_j \rightharpoonup \tilde \cM', \]
for a tangent flow $\tilde \cM'$ of $\cM'$ at $X$. By Lemma \ref{lemm:fF-dontsmoothlycross}, $\cM$ and $\cM'$ cannot cross smoothly, and thus neither can their dilations $\tilde \cM_j$, $\tilde \cM'_j$. Thus, by Lemma \ref{lemm:smooth-cross}, $\tilde \cM$, $\tilde \cM'$ cannot cross smoothly either. 

If $V$ and $V'$ are the $F$-stationary $n$-varifolds associated with $\tilde \cM$ and $\tilde \cM'$ then $\lambda(M) < 2$ implies that $\lambda(V)$ and $\lambda(V') < 2$, so in particular $V = V'$ by Lemma \ref{lemm:smooth-cross-stat-var}. Thus, the set of tangent flows to $\cM$ at $X$ is the same as the set of tangent flows to $\cM'$ at $X$, at least in their $t < 0$ portions that are uniquely determined by $V$, $V'$.
\end{proof}

\subsection{Parabolic planes and a covering lemma} 

We will be relying on estimates on the parabolic Hausdorff dimension of the singular set of a mean curvature flow obtained in \cite{White:stratification,CheegerHaslhoferNaber}. These estimates, as in the case of minimizing hypersurfaces, rely on the notion of the \textit{spine} of a tangent flow (see Definition \ref{defi:spine} below). To that end, we introduce some notation for the set of possible spines.

\begin{definition}\label{defi:spacetime-spine}
We denote $\mathscr{S}$ by the set of subspaces $\Pi$ of $\RR^{n+1}\times \RR$ that are either of the form $\Pi=\Pi' \times \{0\}$ or $\Pi'\times \RR$ for $\Pi'\subset \RR^{n+1}$ a subspace. Write $D(\Pi) = \dim \Pi'$ in the first case and $D(\Pi) = \dim \Pi' +2$ in the second.  This is just the \emph{parabolic} dimension of $\Pi\subset \RR^{n+1}\times\RR$  and satisfies $0 \leq D(\Pi) \leq n+2$.
\end{definition}

\begin{remark} \label{rema:no-quasistatic}
	In the general parabolic setting spines need not be elements of $\mathscr{S}$ due to the possibility of quasi-static cones arising as tangent flows. However, unit-regular flows with entropy $ < 2$ do not have quasi-static cones arising as tangent flows, so Definition \ref{defi:spacetime-spine} precisely suffices for our consideration of spines.
\end{remark}

For $\Pi \subset \RR^{n+1} \times \RR$, $\gamma > 0$, we will write
\[ U_\gamma(\Pi) : = \bigcup_{(\bx,t) \in \Pi} (B_\gamma(\bx) \times (t-\gamma^2,t+\gamma^2)) \]
for the parabolic (forward-backward) $\gamma$-tubular neighborhood of $\Pi$.

\begin{lemma}[{cf.\ \cite[(3.14)]{CheegerHaslhoferNaber}}] \label{lemm:covering}
There is $C=C(n)\in [1,\infty)$ with the following property. 

For any $\Pi\in \sS$ and $\gamma\in(0,1)$, if $\mathfrak{X} \subset U_\gamma(\Pi) \cap P((\bOh,0),1)$ is an arbitrary nonempty subset, then there are points $X_1,\dots,X_K \in \mathfrak{X}$ so that
\[ \mathfrak{X} \subset \cup_{k=1}^K P(X_k,\gamma) \text{ with } K \leq C\gamma^{-D(\Pi)}. \]
\end{lemma}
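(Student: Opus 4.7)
The plan is to run a standard greedy (Vitali-type) packing argument with respect to the parabolic max-metric
\[ d_P((\bx,t),(\by,s)) := \max\bigl(|\bx-\by|, \sqrt{|t-s|}\bigr), \]
for which $P(X,r) = \{Y : d_P(X,Y) < r\}$ is precisely an open ball and the triangle inequality holds. First, I would select a maximal collection $X_1,\dots,X_K \in \mathfrak{X}$ such that the balls $\{P(X_k,\gamma/5)\}$ are pairwise disjoint. Maximality forces that for every $X \in \mathfrak{X}$ there exists $k$ with $P(X,\gamma/5) \cap P(X_k,\gamma/5) \neq \emptyset$, whence $d_P(X,X_k) < 2\gamma/5 < \gamma$, giving $\mathfrak{X} \subset \bigcup_k P(X_k,\gamma)$. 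So the only real content is the cardinality bound $K \leq C(n)\gamma^{-D(\Pi)}$.

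For that, I would use a volume comparison in $\RR^{n+1}\times\RR$ (with the usual Lebesgue measure $|\cdot|$). Each $P(X_k,\gamma/5)$ has measure $c(n)\gamma^{n+3}$. Since $X_k \in U_\gamma(\Pi) \cap P((\bOh,0),1)$ and $\gamma < 1$, the triangle inequality for $d_P$ gives $P(X_k,\gamma/5) \subset U_{2\gamma}(\Pi) \cap P((\bOh,0),2)$. The crux is then the measure estimate
\[ \bigl|U_{2\gamma}(\Pi) \cap P((\bOh,0),2)\bigr| \leq C(n)\, \gamma^{n+3 - D(\Pi)}, \]
which I would verify by examining the two cases in Definition \ref{defi:spacetime-spine} separately. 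If $\Pi = \Pi' \times \{0\}$ with $\dim \Pi' = d'$, then $U_{2\gamma}(\Pi)$ restricts space to an $O(\gamma)$-tubular neighborhood of $\Pi'$ (contributing $\gamma^{n+1-d'}$) and time to an interval of length $O(\gamma^2)$ (contributing $\gamma^2$), for a total of $C\gamma^{n+3-d'} = C\gamma^{n+3-D(\Pi)}$. If $\Pi = \Pi'\times\RR$ with $\dim\Pi'=d'$, then only the spatial tube contributes, giving $C\gamma^{n+1-d'} = C\gamma^{n+3-(d'+2)} = C\gamma^{n+3-D(\Pi)}$. Combining the two displays and cancelling $\gamma^{n+3}$ yields $K \leq C(n)\gamma^{-D(\Pi)}$.

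There is no real obstacle here: the argument is a direct parabolic analogue of the elementary fact that a $d$-dimensional plane in $\RR^N$ can be covered by $O(\gamma^{-d})$ balls of radius $\gamma$ within a fixed-size region, with the two cases of Definition \ref{defi:spacetime-spine} simply reflecting whether the spine is (parabolically) horizontal or vertical. The only point that needs attention is using $d_P$ consistently (so that ``balls are balls'' and triangle inequality holds) and correctly tracking the parabolic dimension $D(\Pi)$ versus the linear dimension $d' = \dim\Pi'$.
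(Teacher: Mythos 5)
Your proof is correct and is exactly the argument the paper has in mind: its proof of this lemma is the one-line remark that it ``follows in a standard way from the Vitali covering lemma,'' and your greedy $\gamma/5$-packing with the parabolic max-metric plus the volume comparison in the two cases of Definition \ref{defi:spacetime-spine} is the standard way. All steps check out, including the triangle inequality for $d_P$ and the measure estimate $|U_{2\gamma}(\Pi)\cap P((\bOh,0),2)|\leq C(n)\gamma^{n+3-D(\Pi)}$.
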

\begin{proof}
	This follows in a standard way from the Vitali covering lemma.
\end{proof}

\section{Low-entropy $F$-stationary varifolds} \label{sec:low-ent-F-var}

In this section we consider a cyclic integral $F$-stationary $n$-varifold $V$ in $\RR^{n+1}$ with $\lambda(V) < 2$. We will write $\mu(V) = \mu(\reg V)$ (see \eqref{eq:first-eig-L}).

\begin{proposition}\label{prop:finite-mu-smooth}
If $\mu(V) > -\infty$ then any blow-up limit of $V$ has stable regular part. 
\end{proposition}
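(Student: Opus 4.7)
The plan is to show that any tangent cone $\cC$ of $V$ at a singular point $\bx_0 \in \sing V$ has $\reg \cC$ stable as a minimal hypersurface, by transferring the spectral lower bound for the drift-stability operator $L$ on $V$ down to the ordinary stability operator on $\cC$ after a parabolic blow-up. At regular points of $V$ the tangent cone is flat, hence trivially stable; and any iterated blow-up of an already-stable cone is stable by the same argument applied one more time (or by taking a smooth limit of stable minimal hypersurfaces on the regular part).

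\emph{Step 1 (Extracting the cone).} Choose $r_i \downarrow 0$ and set $V_i := \eta_{\bx_0,r_i}V$. From the shrinker equation, the mean curvature of $V_i$ satisfies $H_{V_i}(\by) = -\tfrac12(r_i^2\by^\perp + r_i\bx_0^\perp)$, which goes to $0$ uniformly on bounded sets. So after passing to a subsequence, $V_i$ converges as varifolds to a stationary cone $\cC$; it is cyclic integral with $\lambda(\cC) = \Theta_V(\bx_0) \leq \lambda(V) < 2$ by Huisken's monotonicity and lower semicontinuity of entropy. In particular $\cC$ has multiplicity one on $\reg \cC$ (any multiplicity $\geq 2$ piece of a plane would push the entropy to $\geq 2$).

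\emph{Step 2 (Smooth convergence on the regular part).} By Allard's regularity theorem combined with Lemma \ref{lemm:sing-F-stat} and the low-entropy/cyclic structure, $V_i \to \cC$ smoothly with multiplicity one on every compact $K\subset \reg\cC$. Given $\psi \in C^\infty_c(\reg \cC)$ with $\supp \psi \subset K$, transplant $\psi$ through local graphs to $\psi_i \in C^\infty_c(\reg V_i)$, with $\psi_i \to \psi$ and $|\nabla_{V_i}\psi_i|^2$, $|A_{V_i}|^2$ converging smoothly to $|\nabla_\cC\psi|^2$, $|A_\cC|^2$ on $\supp \psi$. Define $f_i \in C^\infty_c(\reg V)$ by $f_i(\bx) := \psi_i((\bx - \bx_0)/r_i)$, supported in the shrinking neighborhood $\bx_0 + r_i K$ of $\bx_0$.

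\emph{Step 3 (Spectral inequality and rescaling).} Setting $\mu_0 := \mu(V) > -\infty$, the definition \eqref{eq:first-eig-L} gives
\[
\int_V \bigl(|\nabla f_i|^2 - |A|^2 f_i^2 - \tfrac12 f_i^2\bigr) e^{-|\cdot|^2/4}\, d\cH^n \geq \mu_0 \int_V f_i^2 e^{-|\cdot|^2/4}\, d\cH^n.
\]
Under the dilation, $d\cH^n_V = r_i^n\, d\cH^n_{V_i}$, $|\nabla_V f_i|^2 = r_i^{-2}|\nabla_{V_i}\psi_i|^2$, and $|A_V|^2 = r_i^{-2}|A_{V_i}|^2$. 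Pushing forward and dividing through by $r_i^{n-2}$ yields
\[
\int_{V_i}\bigl(|\nabla \psi_i|^2 - |A|^2 \psi_i^2\bigr) e^{-|r_i\cdot + \bx_0|^2/4}\, d\cH^n_{V_i} \geq \bigl(\mu_0 + \tfrac12\bigr) r_i^2 \int_{V_i} \psi_i^2 e^{-|r_i\cdot + \bx_0|^2/4}\, d\cH^n_{V_i}.
\]
The Gaussian weight converges uniformly on $\supp\psi_i$ to the positive constant $e^{-|\bx_0|^2/4}$, the right-hand side is $O(r_i^2) \to 0$, and smooth convergence on the support sends the left-hand side to $e^{-|\bx_0|^2/4}\int_\cC(|\nabla \psi|^2 - |A_\cC|^2 \psi^2)\, d\cH^n_\cC$. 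Dividing by the positive constant gives the stability inequality on $\cC$ for arbitrary $\psi \in C^\infty_c(\reg \cC)$.

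\emph{Main obstacle.} The only non-routine ingredient is the smooth multiplicity-one convergence $V_i \to \cC$ on compact subsets of $\reg \cC$; this uses $\lambda(\cC) < 2$ (no higher-multiplicity tangent planes) together with the cyclic hypothesis (no $Y$-type junctions) to apply Allard. The rescaling algebra is delicate only in that both the stability and $L$-operator scalings must be tracked so that the ``bad'' zero-order term $-\tfrac12 f^2$ (and the $\mu_0$ term on the right) become $O(r_i^2)$ relative to the leading Dirichlet-minus-$|A|^2$ terms, which is precisely what makes the drift-stability lower bound degenerate into ordinary stability in the blow-up limit.
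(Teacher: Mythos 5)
Your argument is correct and is essentially the paper's proof run in the contrapositive direction: both transplant test functions between $V$ and its blow-up (using smooth multiplicity-one convergence away from the singular set of the limit) and observe that after rescaling the zeroth-order terms $-\tfrac12 f^2$ and $\mu_0 f^2$ are $O(r_i^2)$ relative to the Dirichlet-minus-$|A|^2$ terms, while the Gaussian weight tends to a positive constant. The only cosmetic difference is that you restrict to (iterated) tangent cones at a fixed point, whereas the statement and the paper's proof allow general blow-up limits $\lambda_i(V-\bx_i)$ with varying base points $\bx_i\to\bx_\infty$ (whose limits need not be cones); your computation adapts verbatim to that case, since conicality of the limit is never used in the stability transfer and the weight still converges uniformly to $e^{-|\bx_\infty|^2/4}$ on bounded sets.
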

\begin{proof}
Consider a blow-up limit $\tilde V_i = \lambda_i(V-\bx_i) \rightharpoonup \tilde V$ with $\lambda_i\to\infty$, $\bx_i\to \bx_\infty$. Assume, for contradiction, that $\tilde V$ has unstable regular part. Let $U$ be an open set such that
\[ \sing \tilde V \cap U = \emptyset \text{ and } \reg \tilde V \cap U \text{ is unstable}. \]
Fix some $\delta > 0$ and a nonzero $\tilde  f \in C^\infty_c(\reg \tilde V \cap U)$ with 
\[
\int_{\reg\tilde V} \big( |\nabla \tilde  f(\by)|^2 - |A(\by)|^2 \tilde f(\by)^2 \big) \, d\by\leq - 2\delta \int_{\reg\tilde V} \tilde  f(\by)^2 \, d\by.
\]
Since $\sing \tilde V \cap U = \emptyset $ we can find non-zero $\tilde  f_i \in C^\infty_c(\reg\tilde  V_i \cap U)$ converging to $f$ smoothly. Observing that as $i\to\infty$, 
\[
e^{\tfrac 1 4|\bx_i|^2}e^{-\tfrac 1 4|\bx_i+\lambda_i^{-1}\by|^2} = e^{-\tfrac 12 \lambda_i^{-1} \bx_i\cdot\by - \tfrac 1 4 \lambda_i^{-2} |\by|^2} \to 1 \text{ uniformly for $|\by|$ bounded},
\]
we thus conclude that, for $i$ sufficiently large,
\[
\int_{\reg \tilde  V_i} \big(|\nabla\tilde   f_i|(\by)^2 - |A(\by)|^2 \tilde  f_i(\by)^2 \big)e^{-\tfrac 1 4|\bx_i+\lambda_i^{-1}\by|^2} \, d\by \leq - \delta \int_{\reg \tilde  V_i} \tilde  f_i(\by)^2 e^{-\tfrac 1 4|\bx_i+\lambda_i^{-1}\by|^2} \, d\by.
\]
Define $f_i \in C^\infty_c(\reg V)$ by pulling $\tilde f_i$ back to the original scale, i.e.~$f_i(\bx) = \tilde f_i (\lambda_i(\bx-\bx_i))$. We have:
\begin{multline*}
\int_{\reg V} \big( |\nabla f_i|(\bx)^2 - |A(\bx)|^2 f_i(\bx)^2 - \tfrac 12 f_i(\bx)^2 \big) e^{-\tfrac 14 |\bx|^2} \, d\bx\\  \leq (- \delta\lambda_i^2 - \tfrac 12) \int_{\reg V} f_i(\bx)^2  e^{-\tfrac 14 |\bx|^2} \, d\bx.
\end{multline*}
Since $\lambda_i\to\infty$ we find that $\mu(V) = -\infty$. This completes the proof. 
\end{proof}

Arguing as in the proof of Lemma \ref{lemm:sing-F-stat} we have the following result. 

\begin{corollary}\label{coro:finite-mu-smooth}
Suppose that \eqref{diamond_cond} holds. If $\lambda(V) < \Lambda$ and $\mu(V) > -\infty$ then $\sing V = \emptyset$, i.e., $V$ is the varifold associated to some smooth self-shrinker $\Sigma \in \cS_n$.
\end{corollary}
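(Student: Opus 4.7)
The plan is to argue by contradiction: assume $\sing V \neq \emptyset$, and use $\mu(V) > -\infty$ together with dimension reduction to manufacture an element of $\cR\cS\cC_k^*(\Lambda)$ whose existence is forbidden either by Simons's theorem (for small $k$) or by $(\diamondsuit_{n,\Lambda})$ (for large $k$).

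First, since $\lambda(V)<\Lambda\leq 2$ and $V$ is cyclic, Lemma \ref{lemm:sing-F-stat}(b) supplies an iterated tangent cone of $V$ of the form (up to rotation)
\[
\RR^{n-k} \times \cC^k,\qquad 3 \leq k \leq n,
\]
where $\cC^k \subset \RR^{k+1}$ is a smooth, multiplicity-one, non-flat minimal cone. By a standard diagonalization across the successive blow-up steps, this iterated cone is realized as the weak limit of a single sequence of rescalings $\lambda_i(V - \bx_i)$ of $V$ itself (with $\lambda_i \to \infty$). That puts us in the exact setting of Proposition \ref{prop:finite-mu-smooth}, so the hypothesis $\mu(V)>-\infty$ forces the regular part of $\RR^{n-k}\times \cC^k$ to be stable as a minimal hypersurface in $\RR^{n+1}$. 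Translations in the $\RR^{n-k}$ factor are automatic Jacobi deformations that never destabilize, so this stability descends to $\reg \cC^k$ in $\RR^{k+1}$; combined with $\sing \cC^k = \{\bOh\}$ and non-flatness, this places $\cC^k$ in $\cR\cS\cC_k^*$.

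Next, the entropy is controlled by upper semi-continuity under weak limits together with Remark \ref{rema:lambda.product}:
\[
\lambda(\cC^k) \;=\; \lambda(\RR^{n-k} \times \cC^k) \;\leq\; \lambda(V) \;<\; \Lambda.
\]
Hence $\cC^k \in \cR\cS\cC_k^*(\Lambda)$. But Simons's theorem (quoted right after the definition of $\cR\cS\cC_k^*$) gives $\cR\cS\cC_k^*(\Lambda)=\emptyset$ for $3\leq k\leq 6$, while the standing hypothesis $(\diamondsuit_{n,\Lambda})$ gives the same conclusion for $7\leq k\leq n$. Either way, the existence of $\cC^k$ is contradicted, so $\sing V = \emptyset$, and $V$ is the multiplicity-one varifold of a smooth self-shrinker $\Sigma \in \cS_n$.

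The only genuinely delicate point is the first one, namely the diagonal extraction needed to realize the iterated tangent cone as a single blow-up limit of $V$ (so that Proposition \ref{prop:finite-mu-smooth} applies verbatim rather than to a blow-up of a blow-up). This is routine and mirrors the reasoning used in the proof of Lemma \ref{lemm:sing-F-stat}; everything else is a direct assembly of Lemma \ref{lemm:sing-F-stat}(b), Proposition \ref{prop:finite-mu-smooth}, entropy semi-continuity, Simons, and $(\diamondsuit_{n,\Lambda})$.
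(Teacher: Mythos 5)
Your proposal is correct and is exactly the argument the paper intends (the paper's entire ``proof'' is the sentence ``Arguing as in the proof of Lemma \ref{lemm:sing-F-stat}...'', i.e., dimension reduction to an iterated tangent cone $\RR^{n-k}\times\cC^k$, realized via diagonalization as a blow-up limit so that Proposition \ref{prop:finite-mu-smooth} gives stability, followed by Simons for $k\leq 6$ and \eqref{diamond_cond} for $k\geq 7$). The only cosmetic quibble is your justification that stability of $\RR^{n-k}\times\cC^k$ descends to $\cC^k$: the clean argument is the contrapositive with a cutoff in the $\RR^{n-k}$ directions (an unstable direction on $\cC^k$ times a slowly varying cutoff destabilizes the product), rather than an appeal to translational Jacobi fields.
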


We recall Definition \ref{defi:shrinking-flow} for the shrinking flow $\cM_V$ associated to $V$.  

\begin{definition}\label{defi:spine} For any $F$-stationary varifold $V$ we set
\begin{equation}\label{eq:spacetime-spine-uniform-est}
\spine V := \{X \in \RR^{n+1} \times \RR : \cM_V + X =\cM_V\}. 
\end{equation}
It follows from \cite[\S 8]{White:stratification} and Remark \ref{rema:no-quasistatic} that $\spine V \in \sS$ and  $\Theta_{\cM_V}(X) = F(V)$ if and only if $X \in \spine V$. By abuse of notation we will write
\[ D(V) = D(\spine V), \]
where $D(\spine V)$ is as in Definition \ref{defi:spacetime-spine}.
\end{definition} 

\begin{lemma}\label{lemm:spine-limiting}
Suppose that $V_j \rightharpoonup V$ is a convergent sequence of $F$-stationary integral varifolds. Then:
\begin{enumerate}
	\item[(a)] $\limsup_j D(V_j) \leq D(V)$, and 
	\item[(b)] $\limsup_j \mu(V_j) \leq \mu(V)$. 
\end{enumerate}
\end{lemma}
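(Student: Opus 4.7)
The plan is to prove both parts by passing to a subsequence of $(V_j)$ and exploiting upper semicontinuity of the symmetry group (for (a)) and of the Rayleigh quotient (for (b)), both under weak varifold convergence.

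For (a), pass to a subsequence realizing $\limsup_j D(V_j)$. Write $\spine V_j = \Pi_j' \times \{0\}$ if $V_j$ is not a cone and $\Pi_j' \times \RR$ if it is, where $\Pi_j' \subset \RR^{n+1}$ is the subspace of spatial translation symmetries of $V_j$. After a further subsequence I may assume (i) the type (cone vs.\ non-cone) is constant in $j$, and (ii) $\dim \Pi_j' = k$ is constant with $\Pi_j' \to \Pi_\infty'$ in the Grassmannian of $k$-planes in $\RR^{n+1}$. The symmetry relation $V_j + v = V_j$ for $v \in \Pi_j'$ passes to $V + v_\infty = V$ for each $v_\infty \in \Pi_\infty'$ under weak varifold convergence, so $\Pi_\infty' \subset \Pi'$, the spatial translation-symmetry subspace of $V$. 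Moreover, a weak limit of conical varifolds is conical, so if the $V_j$ are cones then $V$ is also a cone and $\spine V \supset \Pi_\infty' \times \RR$; if the $V_j$ are non-cones, then $\spine V \supset \Pi_\infty' \times \{0\}$, which is only enlarged if $V$ itself happens to be a cone. In every case $D(V) \geq D(V_j)$ for large $j$, giving (a).

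For (b), fix $\varepsilon > 0$ and pick $f \in C^\infty_c(\reg V)$ with $\int_{\reg V} f^2 e^{-|\cdot|^2/4} = 1$ and $L$-Rayleigh quotient at most $\mu(V) + \varepsilon$. Since $\lambda(V) < 2$, tangent cones to $V$ on $\reg V$ are multiplicity-one planes, so Allard's regularity theorem---applied to $V_j$ as stationary varifolds in the conformal metric $e^{-|\cdot|^2/(2n)} g_{\RR^{n+1}}$ of Definition \ref{defi:f-stationary-varifold}---yields smooth graphical convergence $V_j \to V$ on compact subsets of $\reg V$ for $j$ large. Thus $\supp f$ lies in this smooth-convergence region for large $j$, and pulling $f$ back by nearest-point projection onto $V_j$ produces $f_j \in C^\infty_c(\reg V_j)$ with $f_j \to f$ smoothly and $|A_{V_j}|^2 \to |A_V|^2$ smoothly on $\supp f$. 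The three Gaussian-weighted integrals entering the definition of $\mu$ all converge, so $\mu(V_j) \leq \mu(V) + 2\varepsilon$ for all large $j$; letting $\varepsilon \to 0$ gives (b).

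The only nontrivial ingredient is the smooth convergence $V_j \to V$ used in (b), which I need both to define $f_j$ and to guarantee convergence of the weighted $L$-functional; this relies on the multiplicity-one conclusion from $\lambda(V) < 2$ combined with Allard regularity in the conformal metric. Part (a) is pure bookkeeping around the two parabolic types of elements of $\mathscr{S}$ together with Grassmannian compactness, and requires no deeper input.
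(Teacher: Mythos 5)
Your proof is correct and follows essentially the same route as the paper's: for (a) you pass the translation/dilation symmetries to the weak limit (the paper does this directly with normalized spacetime spine points rather than your cone/non-cone case split and Grassmannian compactness, but the content is identical), and for (b) you transplant near-optimal test functions using local smooth (multiplicity-one) convergence away from $\sing V$, which is exactly the one-line argument the paper gives in expanded form.
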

\begin{proof}
Suppose that $\cM_{V_j} + X_j = \cM_{V_j}$ for $X_j \in \spine V_j$. Without loss of generality, $|X_j| = 1$ for all $j$ and thus $X_j\to X$ after passing to a subsequence. Passing $\cM_{V_j} + X_j = \cM_{V_j}$ to the limit, too, we get $\cM_V + X = \cM_V$, so $X \in \spine V$.  This proves (a).

Assertion (b) follows from \eqref{eq:first-eig-L} since on every open set $U \Subset \RR^{n+1}\setminus \sing V$ we have smooth convergence of $V_j \to V$.
\end{proof}

The following estimate\footnote{With some extra effort, it should be possible to show that there is equality in Lemma \ref{lemm:prod-mu}, but we will not need this fact here. 
} is straightforward to prove by cutting off in the $\RR^\ell$-factors and using the finiteness of Gaussian area. 
\begin{lemma}\label{lemm:prod-mu}
For $\Sigma \in \cS_k$ we have $\mu(\RR^\ell \times \Sigma) \leq\mu(\Sigma)$. 
\end{lemma}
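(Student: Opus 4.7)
The plan is to exhibit, for any $\phi \in C_c^\infty(\reg\Sigma)$ and any $\eps > 0$, a test function $f \in C_c^\infty(\reg(\RR^\ell \times \Sigma))$ whose Rayleigh quotient in \eqref{eq:first-eig-L} is bounded above by that of $\phi$, plus $\eps$. Taking the infimum over $\phi$ and then $\eps \to 0$ gives the result.

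The candidate is a separable test function of the form
\[
	f(\by,\bx) = \psi_R(\by) \phi(\bx),
\]
where $\psi_R \in C_c^\infty(\RR^\ell)$ is a standard cutoff with $\psi_R \equiv 1$ on $B_R \subset \RR^\ell$, $\supp \psi_R \subset B_{2R}$, and $|\nabla \psi_R| \leq C/R$. Since the second fundamental form of $\RR^\ell \times \Sigma$ at $(\by,\bx)$ equals $|A_\Sigma(\bx)|^2$ and the Gaussian weight $e^{-|\cdot|^2/4}$ factors as $e^{-|\by|^2/4} e^{-|\bx|^2/4}$, a direct computation of $|\nabla f|^2 = |\nabla_{\RR^\ell} \psi_R|^2 \phi^2 + \psi_R^2 |\nabla_\Sigma \phi|^2$ splits the Rayleigh quotient as
\[
	\frac{\displaystyle\int_{\RR^\ell} |\nabla \psi_R|^2 e^{-|\by|^2/4} \, d\by}{\displaystyle\int_{\RR^\ell} \psi_R^2 e^{-|\by|^2/4} \, d\by} \;+\; \frac{\displaystyle\int_\Sigma (|\nabla \phi|^2 - |A|^2 \phi^2 - \tfrac{1}{2} \phi^2) e^{-|\bx|^2/4}}{\displaystyle\int_\Sigma \phi^2 e^{-|\bx|^2/4}}.
\]

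The second summand is exactly the Rayleigh quotient of $\phi$ on $\Sigma$. For the first summand, the numerator is bounded by $C R^{-2} \int_{\{R \leq |\by| \leq 2R\}} e^{-|\by|^2/4} \, d\by$, which decays faster than any polynomial as $R \to \infty$ (finiteness of Gaussian area in $\RR^\ell$), while the denominator stays bounded below by a fixed positive constant (e.g.\ $\int_{B_1} e^{-|\by|^2/4}$). Hence the first summand tends to $0$ as $R \to \infty$, so for $R$ large enough it is at most $\eps$.

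The only mild subtlety is that $\phi$ is supported in $\reg \Sigma$, so $f$ is supported in $\RR^\ell \times \reg\Sigma \subset \reg(\RR^\ell \times \Sigma)$, which is what \eqref{eq:first-eig-L} requires; there is no obstacle from $\sing\Sigma$. No step here is genuinely hard—the entire content is that Gaussian measure concentrates, so one can weakly localize in the translation-invariant factor at negligible energy cost.
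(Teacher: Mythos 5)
Your argument is correct and is precisely the argument the paper has in mind: the lemma is stated there with only the one-line indication that it follows ``by cutting off in the $\RR^\ell$-factors and using the finiteness of Gaussian area,'' which is exactly your separable test function $\psi_R(\by)\phi(\bx)$ with the cutoff error controlled by Gaussian decay. (The only cosmetic point: since $\Sigma \in \cS_k$ is by definition a complete smooth shrinker, $\reg\Sigma = \Sigma$ and the remark about $\sing\Sigma$ is vacuous.)
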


The following result is a crucial ingredient in this paper. We estimate the size of the spine versus the first eigenvalue of the $L$ operator. It is crucial that we prove a uniform bound, and this necessarily complicates the statement and proof. 

\begin{proposition}\label{prop:uniform-est-spine-eig}
Fix $n \in \{ 2,3,\ldots \}$, $\Lambda \in (0, 2]$ so that \eqref{heart_cond} and \eqref{diamond_cond} hold and let $\eps>0$. There are constants $\kappa=\kappa(n,\Lambda,\eps)>0,\rho_0=\rho_0(n,\Lambda,\eps)>0,r_0=r_0(n,\Lambda,\eps)> 2 $ so that the following holds. 

Let $V$ be a non-flat and non-generic $F$-stationary cyclic integral $n$-varifold in $\RR^{n+1}$ with
$\lambda(V) \leq \Lambda-\eps$. Then:
\begin{enumerate}
	\item[(a)] $\cR_{3\rho_0,r_0/3} V(\bOh)\neq \emptyset$. 
	\item[(b)] For any $\cR_{3\rho_0,r_0/3}V(\bOh) \subset \Omega\subset \reg V$, we have
	\[
	2\mu(\Omega) + D(V) < - 2\kappa.
	\]
\end{enumerate}
\end{proposition}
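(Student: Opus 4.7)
The plan is to prove both parts by a compactness-and-contradiction argument, using \eqref{diamond_cond} and \eqref{heart_cond} to control the possible subsequential varifold limits. For \textbf{(a)}, suppose that no admissible $\rho_0, r_0$ exist; then one can find $\rho_j\downarrow 0$, $r_j\uparrow\infty$ and admissible $V_j$ with $\cR_{3\rho_j,r_j/3}V_j(\bOh)=\emptyset$. By standard compactness of cyclic integral $F$-stationary varifolds with entropy $\leq \Lambda-\eps<2$, a subsequence satisfies $V_j\rightharpoonup V_\infty$ with $\lambda(V_\infty)<2$; Lemma~\ref{lemm:sing-F-stat} then gives $\reg V_\infty\neq\emptyset$. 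Picking any $\by\in\reg V_\infty$ with positive regularity scale and invoking Remark~\ref{rema:reg-scale-continuity} produces $\by_j\in\reg V_j$ with $\by_j\to\by$ and uniformly bounded-below regularity scale, contradicting the assumed emptiness for $j$ large.

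For \textbf{(b)}, again argue by contradiction: diagonalize to produce $\kappa_j,\rho_j\downarrow 0$, $r_j\uparrow\infty$, admissible $V_j$, and open $\Omega_j$ with $\cR_{3\rho_j,r_j/3}V_j(\bOh)\subset\Omega_j\subset\reg V_j$ satisfying $2\mu(\Omega_j)+D(V_j)\geq -2\kappa_j$. Pass to a subsequential limit $V_\infty$ and arrange $D(V_j)\equiv d$ by further subsequencing; Lemma~\ref{lemm:spine-limiting}(a) gives $D(V_\infty)\geq d$. The key intermediate estimate is $\limsup_j \mu(\Omega_j)\leq \mu(V_\infty)$. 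For any $f\in C_c^\infty(\reg V_\infty)$, Brakke/Allard regularity combined with $\lambda(V_\infty)<2$ and Remark~\ref{rema:reg-scale-continuity} give smooth convergence $V_j\to V_\infty$ on a neighborhood of $\supp f$ with a uniform positive lower bound on the regularity scale. Since $\rho_j\to 0$ and $r_j\to\infty$, the transplanted $f_j\in C_c^\infty(V_j)$ is supported in $\cR_{3\rho_j,r_j/3}V_j(\bOh)\subset\Omega_j$ for $j$ large, and its Rayleigh quotient converges to that of $f$; combined with $\mu(\Omega_j)\geq -d/2-\kappa_j$, this yields $\mu(V_\infty)\geq -d/2$.

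It remains to show $\mu(V_\infty)<-D(V_\infty)/2\leq -d/2$ strictly, completing the contradiction. If $V_\infty$ is a cone or has singular points, Lemma~\ref{lemm:sing-F-stat}(b) produces an iterated tangent cone $\RR^{n-k}\times\cC^k$ with $k\geq 3$ and $\cC$ non-flat; by \eqref{diamond_cond} the regular part of $\cC$ is unstable, and a dilation $f\mapsto f(\lambda\,\cdot)$ with $\lambda\to\infty$ (exploiting that the Gaussian weight $e^{-|\cdot|^2/4}\to 1$ after rescaling) drives the Rayleigh quotient to $-\infty$, forcing $\mu(V_\infty)=-\infty$. Otherwise $V_\infty\in\cS_n^*$; writing $V_\infty=\RR^\ell\times\tilde V_\infty^k$ with $\tilde V_\infty$ irreducible and $\ell=D(V_\infty)\geq d$ yields $\mu(V_\infty)\leq \mu(\tilde V_\infty)$ by Lemma~\ref{lemm:prod-mu}. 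If $\tilde V_\infty$ is a generic sphere $\SS^k$, Allard regularity promotes $V_j\to V_\infty$ to smooth graphical convergence near $\SS^k$, forcing the $\tilde V$-factor of $V_j$ to be mean-convex; by Huisken's/Colding--Minicozzi's classification of mean-convex compact embedded shrinkers, that factor equals $\SS^k$, making $V_j$ generic---contradicting our choice. If $\tilde V_\infty$ is a non-generic smooth shrinker, then \eqref{heart_cond} gives $\mu(\tilde V_\infty)<-(n-k)/2=-\ell/2$ when $k\in\{2,\dots,n-3\}$; the endpoints $\ell\in\{0,1\}$ are automatic from the eigenfunction identity $LH=H$ (giving $\mu\leq -1<-\ell/2$), and $\ell=2$ is handled by Colding--Minicozzi's $H$-rigidity, since $\mu(\tilde V_\infty)=-1$ would force the first eigenfunction $H>0$ and hence $\tilde V_\infty$ to be a sphere, contradicting non-genericity.

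The \textbf{main obstacle} is this final substep---excluding that non-generic $V_j$ converge to a generic cylinder, and handling the boundary case $\ell=2$ not directly covered by \eqref{heart_cond}. Both hinge on Allard/Brakke regularity (upgrading weak convergence to smooth graphical convergence where the limit is smooth and has entropy $<2$) combined with the $H$-rigidity characterization of generic shrinkers, and this is precisely where the coupling between $\mu$ and $D(V)$ highlighted in Remark~\ref{rema:heart} becomes critical.
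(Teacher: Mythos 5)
Your overall architecture matches the paper's: both parts by compactness and contradiction, with the heart of the matter being the pointwise inequality $2\mu(V_\infty)+D(V_\infty)<0$ for the limit, proved by splitting $V_\infty=\RR^\ell\times\tilde V_\infty$, using Lemma \ref{lemm:prod-mu}, the strict bound $\mu<-1$ for non-generic smooth shrinkers, \eqref{heart_cond} for $\ell\geq 3$, and \eqref{diamond_cond} plus the blow-up/dilation argument (i.e.\ Proposition \ref{prop:finite-mu-smooth} and Corollary \ref{coro:finite-mu-smooth}, which you essentially reprove inline) to force $\mu=-\infty$ when $\sing V_\infty\neq\emptyset$. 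The transplantation of test functions via Allard and the continuity of the regularity scale is also the paper's mechanism for $\limsup_j\mu(\Omega_j)\leq\mu(V_\infty)$.

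The genuine gap is in the step you yourself flag as the main obstacle: ruling out that the non-generic $V_j$ converge to a generic cylinder $V_\infty=\RR^\ell\times\SS^k(\sqrt{2k})$. This case is not optional decoration --- if it occurred with $\ell\geq 2$ then $\mu(V_\infty)=-1$ and $2\mu(V_\infty)+D(V_\infty)=\ell-2\geq 0$, and the whole contradiction collapses. Your proposed argument assumes that smooth graphical convergence near the $\SS^k$ cross-section forces ``the $\tilde V$-factor of $V_j$'' to be mean-convex and then invokes the Huisken/Colding--Minicozzi classification of mean-convex shrinkers. But the approximating $V_j$ need not split as a product at all, and Allard only gives smooth convergence on compact subsets; so at best you learn that $V_j$ is mean-convex on a large ball, not globally. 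The classification of mean-convex complete shrinkers requires global mean convexity, and upgrading ``mean-convex (indeed, close to a cylinder) on a large but compact set'' to ``is the cylinder'' is precisely the content of the Colding--Ilmanen--Minicozzi cylinder rigidity theorem \cite{ColdingIlmanenMinicozzi}, which is the nontrivial input the paper cites at exactly this point (it is why the limit $V$ can be taken non-generic from the outset). Your argument does go through in the compact case $\ell=0$ (connectedness via the Frankel property makes $V_j$ itself a closed mean-convex shrinker), but that is the one value of $\ell$ for which the generic limit causes no harm. A secondary, more minor omission of the same flavor: you never exclude that $V_\infty$ is a multiplicity-one hyperplane (for which $2\mu+D=n-1>0$); this needs the entropy gap $\lambda(\Sigma)\geq\lambda(\SS^n)>1$ for non-flat shrinkers together with $F(V_j)\to F(V_\infty)$.
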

\begin{proof}
Suppose for contradiction that (a) fails, i.e., there is a sequence of  $F$-stationary cyclic integral $n$-varifolds in $\RR^{n+1}$, $V_j$, with
\[
 \cR_{3j^{-1},j/3}V_j(\bOh) = \emptyset .
\]
Pass to a subsequential limit $V_j\rightharpoonup V$. Note that $V$ is a non-flat $F$-stationary cyclic integral $n$-varifold with $\lambda(V) \leq \Lambda -\eps$ (by a cutoff argument) and $V \not \in \cS_n^\textrm{gen}$ (by \cite{ColdingIlmanenMinicozzi}). In particular, Lemma \ref{lemm:sing-F-stat} gives that $\cR_{\rho,r}V(\bOh) \neq \emptyset$ for $\rho$ sufficiently small and $r$ sufficiently large. This is a contradiction, proving (a).

Note that (a) remains valid if we decrease $\rho_0$ and increase $r_0$. As such, to prove (b) it suffices to consider $V_j$ with $\cR_{3j^{-1},j/3}V_j (\bOh) \subset \Omega_j \subset \reg V_j$ so that
\begin{equation}\label{eq:limiting-spine-est-contr}
0 \leq \limsup_j(2\mu(\Omega_j) + D(V_j)) 
\end{equation}
Pass to a subsequential limit $V_j\rightharpoonup V$ as above.  

\begin{claim} \label{claim:eig-dim-ineq-fixed-V} 
We have $2\mu(V) + D(V) < 0$. 
\end{claim}

Assume this claim for now. Choose $\rho>0$, $r>2$, $\Omega' \subset \cR_{4\rho,r/4}V (\bOh)\subset \reg V$ so that
\[
2\mu(\Omega') + D(V) < 0.
\]
Allard's theorem yields for sufficiently large $j$ regions $\Omega_j' \subset \cR_{3\rho,r/3}V_j(\bOh)\subset \Omega_j\subset  \reg V$ converging as smooth graphs over $\Omega'$. Transplanting test functions on $\Omega'$ to $\Omega_j'$ we find that $\limsup_j \mu(\Omega_j') \leq \mu(\Omega')$. Combined with $\mu(\Omega_j) \leq \mu(\Omega_j')$ and Lemma \ref{lemm:spine-limiting} we find
\[
\limsup_j(2 \mu(\Omega_j) + D(V_j))  \leq \limsup_j(2 \mu(\Omega_j') + D(V_j)) \leq 2\mu(\Omega') + D(V) < 0.
\]
This contradicts \eqref{eq:limiting-spine-est-contr}, and proves (b).

It remains to prove Claim \ref{claim:eig-dim-ineq-fixed-V}. Note it trivially holds if $\mu(V) = -\infty$, so by Corollary \ref{coro:finite-mu-smooth} we can take $V$ to be the varifold associated to a smooth self-shrinker $\Sigma \in \cS_n^* \setminus \cS_n^\textrm{gen}$. Then, $\mu(\Sigma) < -1$ by \cite[Theorems 9.36 and 0.17]{ColdingMinicozzi:generic}, so Claim \ref{claim:eig-dim-ineq-fixed-V} automatically holds if $D(V)\leq 2$. We can thus consider $D(V) \geq 3$. We know that 
\[ \Sigma = \Pi'\times \Sigma' \text{ for } \Sigma' \in \cS_{n-D(V)}^*\setminus\cS_{n-D(V)}^\textrm{gen}. \]
Set $k:=n-D(V)$ and note that $2\leq k \leq n-3$ (the first inequality follows from  $\cS_1^* = \cS_1^\textrm{gen}$ and the second from $D(V) \geq 3$). Since $\Sigma' \in \cS_k^*(\Lambda)\setminus \cS_k^\textrm{gen}$, \eqref{heart_cond} implies 
\[
2 \mu(\Sigma') < -n+k = - D(V). 
\]
Lemma \ref{lemm:prod-mu} implies that $\mu(\Sigma) \leq \mu(\Sigma')$, yielding Claim \ref{claim:eig-dim-ineq-fixed-V}. 
\end{proof}

\section{One-sided flows}\label{sec:one-sided-flows}

\subsection{Crossing between non-generic shrinkers and their translates}

The following result is \cite[Proposition 2.2]{CCMS:low-ent-gen} generalized to the current setting of singular tangent flows. Recall Definition \ref{defi:shrinking-flow} for the notion of associated shrinking flows.

\begin{proposition}\label{prop:geo-input}
Take $n \in \{ 2, 3, \ldots \}$, $\Lambda \in (0, 2]$ such that \eqref{diamond_cond} holds.

Let $V$, $V'$ be cyclic integral $F$-stationary $n$-varifolds in $\RR^{n+1}$ with $\lambda(V)$, $\lambda(V') < \Lambda$ and associated shrinking flows $\cM_V,\cM_{V'}$. If:
\begin{itemize}
	\item $V$ is non-flat and non-generic, and
	\item there is $X' \in \RR^{n+1}\times \RR$ so that $\cM_V$, $\cM_{V'} + X'$ do not cross smoothly,
\end{itemize}
then:
\begin{enumerate}
	\item[(a)] $V'=V$, and 
	\item[(b)] $X' \in \spine V$.
\end{enumerate}
\end{proposition}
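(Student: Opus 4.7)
The strategy is to argue by contradiction and show that if $V' \neq V$ or $X' \notin \spine V$, then $\cM_V$ and $\cM_{V'} + X'$ must cross smoothly. We address (a) and (b) in turn.

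For (a), suppose $V \neq V'$. Since $\lambda(V),\lambda(V') < \Lambda \leq 2$, Lemma \ref{lemm:smooth-cross-stat-var} provides a point $\bx_* \in \reg V \cap \reg V'$ at which $V$, $V'$ cross smoothly; self-similarity of the associated shrinking flows then produces a smooth crossing of $\cM_V$ and $\cM_{V'}$ at the spacetime point $(\bx_*, -1)$. Now consider the parabolic rescalings at the spacetime origin $(\cM_V)_\lambda$ and $(\cM_{V'} + X')_\lambda$: self-similarity gives $(\cM_V)_\lambda = \cM_V$, while a direct computation yields $(\cM_{V'} + X')_\lambda = \cM_{V'} + (\lambda \bx', \lambda^2 t')$, which converges weakly to $\cM_{V'}$ as $\lambda \to 0^+$. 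For $\lambda$ sufficiently small, every rescaled flow in both sequences is unit-regular on $\{t < -\tfrac12\}$ and contains the crossing point $(\bx_*, -1)$ in this time range. Applying Lemma \ref{lemm:smooth-cross} yields spacetime points where the rescaled flows cross smoothly; undoing the rescaling produces smooth crossings of $\cM_V$ and $\cM_{V'} + X'$, contradicting the hypothesis. Hence $V' = V$.

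For (b), with $V = V'$, assume $X' \notin \spine V$ so $\cM_V + X' \neq \cM_V$. At very negative times $t$, the time-slices $\sqrt{-t}\,\supp V$ and $\sqrt{t'-t}\,\supp V + \bx'$ are two large scaled copies of $\supp V$ with scale ratio tending to $1$ and relatively vanishing translation. Since $V$ is non-flat and non-generic (ruling out concentric-sphere disjointness), these slices must intersect for $|t|$ large, so $\supp \cM_V \cap \supp(\cM_V + X') \neq \emptyset$. Lemma \ref{lemm:sing-F-stat} (using $\lambda(V) < 2$) gives $\dim_H \sing V \leq n - 3$, whence the spacetime singular sets of $\cM_V$ and $\cM_V + X'$ have parabolic Hausdorff dimension at most $n - 1$. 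The spacetime support intersection has parabolic dimension $n + 1$ at points where the slices meet transversally, exceeding the dimension of the singular sets; hence there exists a regular intersection point $X_0 \in \reg \cM_V \cap \reg(\cM_V + X')$. Since the flows do not cross smoothly at $X_0$, one lies weakly on one side of the other, and the strong maximum principle for smooth mean curvature flow forces them to coincide in a parabolic neighborhood of $X_0$. The connectedness of $\reg \cM_V$ (via Frankel $+$ SMP, as in the proof of Lemma \ref{lemm:smooth-cross-stat-var}) combined with analytic continuation along the self-similar structure then yields $\cM_V + X' = \cM_V$ globally, contradicting $X' \notin \spine V$.

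The main subtlety lies in Part (b). First, ensuring that the two translated self-similar flows admit intersecting spacetime supports crucially uses the non-genericity of $V$---generic shrinkers (e.g.\ spheres) produce disjoint concentric slices. Second, extracting a regular intersection point relies on the $(n-3)$-dimensional bound on $\sing V$, ultimately provided by $\lambda(V) < 2$. Finally, since \eqref{diamond_cond} does not rule out singular $V$, the propagation of local coincidence to all of $\reg \cM_V$ via connectedness and analytic continuation must be argued carefully across $\sing V$.
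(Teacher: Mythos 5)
Part (a) of your argument is essentially the paper's: blow down at $-\infty$, note that $(\cM_{V'}+X')$ blows down to $\cM_{V'}$, and use Lemma \ref{lemm:smooth-cross} together with Lemma \ref{lemm:smooth-cross-stat-var}. That part is fine.

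Part (b) has genuine gaps. First, the claim that the time slices $\sqrt{-t}\,\supp V$ and $\sqrt{t'-t}\,\supp V+\bx'$ must intersect for $|t|$ large is not justified: the Frankel property and Ilmanen's strong maximum principle apply to two $F$-stationary varifolds with respect to the \emph{same} Gaussian metric, whereas here you are comparing a shrinker with a rescaled, translated copy of itself, which is not $F$-stationary for the same center and scale. ``Non-genericity rules out concentric-sphere disjointness'' is not an argument; the genuinely problematic scenario is a non-flat $V$ with $\bx'\cdot\nu>0$ everywhere (or $H>0$ everywhere when $\bx'=\bOh$), in which case the perturbed slice lies locally strictly to one side and no finite touching point need exist. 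These are exactly the cases the paper eliminates not by finding an intersection but by rigidity: $\bx'\cdot\nu>0$ forces $\mu(V)\geq-\tfrac12$, hence $\sing V=\emptyset$ by Corollary \ref{coro:finite-mu-smooth}, hence $V$ is a hyperplane by Wang's theorem (contradicting non-flatness); and $H>0$ forces $V\in\cS_n^{\textrm{gen}}$ by Colding--Minicozzi/Huisken (contradicting non-genericity). Your proof contains no substitute for these rigidity inputs, and without them it cannot close.

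Second, even granting an intersection, your dimension count for producing a \emph{regular} intersection point is circular: you invoke transversality (``parabolic dimension $n+1$ at points where the slices meet transversally''), but the standing hypothesis is precisely that the flows do \emph{not} cross smoothly, i.e.\ they never meet transversally at regular points. Tangential intersections can have arbitrarily low dimension (two internally tangent spheres meet in a point), so nothing prevents the intersection from being small or from lying in the singular sets. The paper's route avoids all of this by differentiating the family $\lambda\mapsto\sqrt{1+\lambda^2t'}\,\reg V+\lambda\bx'$ at $\lambda=0$: non-crossing for all $\lambda>0$ forces the linearized speed $\bx'\cdot\nu$ (resp.\ $H$) not to change sign, and then the eigenfunction equations $L(\bx'\cdot\nu)=\tfrac12\,\bx'\cdot\nu$ and $LH=H$ together with the strong maximum principle yield the dichotomy ``strictly positive (killed by rigidity) or identically zero (which gives the splitting/conicality needed for $X'\in\spine V$).'' I recommend you restructure part (b) along these lines.
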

\begin{proof}
Note that the tangent flow at $-\infty$ to $\cM_V$ is $\cM_V$ and the tangent flow at $-\infty$ to $\cM_{V'}+X'$ is $\cM_{V'}$. By Lemma \ref{lemm:smooth-cross} and definition of smoothly crossing $F$-stationary varifolds, $V$ and $V'$ do not cross smoothly. Lemma \ref{lemm:smooth-cross-stat-var} thus implies that $V=V'$. 

It remains to prove that $X'\in \spine V$. We may assume that $X' \neq (\bOh, 0)$ without loss of generality, since otherwise (b) is trivial. Write $X'=(\bx',t')$. Since $\cM_V$ is invariant under parabolic dilation, we note that $\cM_V$ and $\cM_V + (\lambda \bx',\lambda^2 t')$ do not cross smoothly for any $\lambda>0$. By considering $t=-1$, we see that the speed of the family
\[
\lambda\mapsto \sqrt{1+\lambda^2 t'} \reg V + \lambda \bx'
\]
at $\lambda=0$ must not change sign. Up to changing the sign of the unit normal $\nu$ on $\reg V$ (recalling $\reg V$ is connected) we thus find that $\bx' \cdot \nu\geq 0$ on  $\reg V$. Since $\bx'\cdot\nu$ satisfies the elliptic PDE $L (\bx'\cdot \nu) = \frac 12 \bx'\cdot \nu$ along $\reg V$ (cf.\ \cite[Theorem 5.2]{ColdingMinicozzi:generic}), the strong maximum principle implies that either $\bx' \cdot \nu > 0$ on $\reg V$ or $\bx' \cdot \nu \equiv 0$ on $\reg V$. 

We begin by considering the first case $\bx' \cdot \nu > 0$ on $\reg V$. By the argument in \cite[Lemma 2.1]{Wang:graph} we find that $\mu(V) \geq -\tfrac 12$ (recalling that $\mu(V)$ is the first eigenvalue of the $L$ operator on $\reg V$). Thus Corollary \ref{coro:finite-mu-smooth} implies that $\sing V= \emptyset$. Hence, by \cite[Theorem 1.1]{Wang:graph}, $V$ is a hyperplane. This contradicts that $V$ is non-flat.

Thus, we find that $\bx'\cdot \nu \equiv 0$. We first assume that $\bx'\neq 0$. This implies that $\reg V$ splits a line in the $\bx'$ direction. By Lemma \ref{lemm:sing-F-stat}, we see that $V$ splits a line in the $\bx'$ direction. If $t'=0$, we see that $X'\in\spine V$. If $t'\neq 0$ then since $V$ splits a line in the $\bx'$ direction, we see that $\cM_V + (\bOh,t') = \cM_V + (\bx',t')$ does not cross $\cM_V$ smoothly.

As such, it remains to consider the case that $\bx'=0$ and $t'\neq 0$. In this case, we can argue as above (by considering $t=-1$) see that the speed of
\[
\lambda \mapsto (1+\lambda t') \reg V 
\]
does not change sign on $\reg V$. This implies that (up to changing the sign of the unit normal) that the mean curvature $H\geq 0$ on $\reg V$. As above, we recall that
\[
L H = H
\] 
along $\reg V$, so the strong maximum principle (and connectedness of $\reg V$) yields either $H>0$ or $H\equiv 0$. As above, if $H>0$ then $\sing V= \emptyset$ (by arguing as in \cite[Lemma 2.1]{Wang:graph} and using Lemma \ref{lemm:sing-F-stat}). In this case, by \cite[Theorem 0.17]{ColdingMinicozzi:generic} (cf.\ \cite{Huisken:sing}) $V\in\cS_n^\textrm{gen}$, a contradiction. Thus, $H\equiv 0$ along $\reg V$. By Lemma \ref{lemm:sing-F-stat} we thus see that $V$ is a stationary cone. Thus, $(0,t')\in\spine V$. This completes the proof.  
\end{proof}

\subsection{Nearly self-similar non-generic flows}

Fix $n \in \{2, 3, \ldots\}$, $\Lambda \in (0, 2]$, $\eps > 0$, and consider a class of nearly self-similar flows about a non-generic singular point:

\begin{definition}\label{defi:Peta0} For $\eta \in (0, 1]$ define $\sP(\eta)$ to be the set of pairs $(X,\cM)$ so that:
\begin{enumerate}
\item $X \in P((\bOh,0),1)$,
\item $\cM$ is a unit-regular cyclic integral $n$-Brakke flow in $\RR^{n+1}$ for $t \in [-\eta^{-2},\infty)$, 
\item $\lambda(\cM) \leq \Lambda -\eps$, 
\item $X \in \sing_\textnormal{non-gen}\cM$, 
\item $\Theta_\cM(X) \geq \Theta_\cM(X,\eta^{-1})-4\eta$. 
\end{enumerate}
\end{definition}

\begin{remark} \label{rema:monotonicity-nearly-constant-contr-arg}
It is useful to observe that for all $(X,\cM) \in \sP(\eta)$, $r \in (0, \eta^{-1}]$,
\begin{equation*} 
\Theta_{\cM}(X) \geq \Theta_{\cM}(X,r) - 4\eta.
\end{equation*}
This follows from monotonicity and (5).
\end{remark}

\begin{lemma}\label{lemm:limit-cF-flows}
Fix $\Lambda \in (0, 2]$, $\eps > 0$. Suppose that $((\bOh,0),\cM_j) \in \sP(j^{-1})$ and $\cM_j\rightharpoonup \cM$. Then, there exists a non-flat and non-generic $F$-stationary cyclic integral $n$-varifold $V$ with $\lambda(V) \leq\Lambda-\eps$, and $\cM = \cM_V$ whenever the latter is nonvanishing (see Definition \ref{defi:shrinking-flow}).
\end{lemma}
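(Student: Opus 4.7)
The plan is to extract $V$ from a weak subsequential limit of $\cM_j$, then verify its $F$-stationarity via monotonicity rigidity, its non-flatness via $\varepsilon$-regularity, and its non-genericity via tangent-flow uniqueness.

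First, by Ilmanen's Brakke-flow compactness under the uniform entropy bound $\lambda(\cM_j)\leq \Lambda-\eps$, pass to a subsequence so that $\cM_j\rightharpoonup \cM$, where $\cM$ is an integral $n$-Brakke flow defined for all $t \in \RR$ (since each $\cM_j$ is defined on $[-j^2,\infty)$). The properties cyclic and unit-regular pass to the limit by the respective closedness results of White, and entropy is lower semicontinuous, so $\lambda(\cM) \leq \Lambda - \eps$.

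Second, to show $\cM$ is self-similarly shrinking about $(\bOh, 0)$ for $t < 0$, fix any $r > 0$. For $j \geq r$, Huisken monotonicity together with Remark \ref{rema:monotonicity-nearly-constant-contr-arg} yield
\[
\Theta_{\cM_j}((\bOh, 0), r) \leq \Theta_{\cM_j}((\bOh, 0), j) \leq \Theta_{\cM_j}((\bOh, 0)) + 4/j.
\]
Letting $j \to \infty$ and using (i) continuity of density at fixed scale under weak Brakke convergence, $\Theta_{\cM_j}((\bOh, 0), r) \to \Theta_\cM((\bOh, 0), r)$, and (ii) upper semicontinuity $\limsup_j \Theta_{\cM_j}((\bOh, 0)) \leq \Theta_\cM((\bOh, 0))$, one obtains $\Theta_\cM((\bOh, 0), r) \leq \Theta_\cM((\bOh, 0))$; combined with the reverse inequality from monotonicity, equality holds for every $r > 0$. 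The rigidity case of Huisken's monotonicity formula then forces $\cM$ to be self-similarly shrinking about $(\bOh, 0)$ for $t < 0$. Setting $V := \cM(-1)$ gives an $F$-stationary cyclic integral $n$-varifold with $\lambda(V) = F(V) \leq \Lambda - \eps$ and $\cM = \cM_V$ on its nonvanishing times.

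Third, to rule out flatness, note that $\lambda(V) < 2$ would force $V$ to be a multiplicity-one plane; Brakke's $\varepsilon$-regularity applied to $\cM_j \rightharpoonup \cM_V$ would then give $(\bOh, 0) \in \reg \cM_j$ for all large $j$, contradicting $X_j = (\bOh, 0) \in \sing \cM_j$.

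The main obstacle is ruling out genericity, exactly because of the phenomenon flagged in the introduction: it is not known in general whether non-generic singularities can limit to generic ones. Suppose for contradiction $V \in \cS_n^\textrm{gen}$, say $V = \SS^k(\sqrt{2k}) \times \RR^{n-k}$ up to rotation. Then $\cM_V$ is smooth away from its shrinking axis $\RR^{n-k} \times \{0\}$, across which $\cM_j$ converges smoothly by $\varepsilon$-regularity, so any singular points of $\cM_j$ accumulating at $(\bOh,0)$ are concentrated on the spine. The plan is to combine this strong local convergence with uniqueness/stability of generic cylindrical tangent flows — Huisken's convexity theorem and Colding--Ilmanen--Minicozzi for spherical limits, and Colding--Minicozzi together with the Bernstein--Wang and Choi--Haslhofer--Hershkovits(-White) stability results in the cylindrical cases — to deduce that the tangent flow to $\cM_j$ at $X_j$ must itself lie in $\cS_n^\textrm{gen}$ for all $j$ sufficiently large, contradicting $X_j \in \singng \cM_j$.
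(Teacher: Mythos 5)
Your first three steps (compactness of the flows, self-similarity of the limit via the near-constancy of density from Remark \ref{rema:monotonicity-nearly-constant-contr-arg} plus Huisken rigidity, and non-flatness via $\varepsilon$-regularity at the singular point $(\bOh,0)$) are correct and essentially identical to the paper's argument.

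The genuine gap is in your fourth step, which is where the entire content of the lemma lies. What you offer there is explicitly a plan rather than a proof, and the plan as stated cannot be completed. The stability results you invoke --- Huisken's convexity theorem, Colding--Ilmanen--Minicozzi, and the Bernstein--Wang / Choi--Haslhofer--Hershkovits(--White) classifications --- only rule out non-generic singularities limiting onto a sphere $\SS^n$ or a neck $\SS^{n-1}\times\RR$. For bubble-sheet cylinders $\SS^{k}\times\RR^{n-k}$ with $n-k\geq 2$, no statement of the form ``if the flow is close to a cylinder at one scale then its tangent flow is generic'' is known; this is precisely the obstruction the introduction flags (``we do not know in general whether non-generic singularities can limit to generic ones''). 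So the conclusion ``the tangent flow to $\cM_j$ at $X_j$ must itself lie in $\cS_n^{\mathrm{gen}}$'' does not follow from the results you cite when $V$ is a bubble sheet, and your observation that the singular points of $\cM_j$ concentrate near the spine does not bridge this.

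The missing idea is that hypothesis (5) of Definition \ref{defi:Peta0} must be used again in this step, at \emph{every} intermediate scale, not only to identify the limit at scale $1$. Because the density of $\cM_j$ is within $4/j$ of constant over all scales in $(0,j]$, every sequence of time-translates $\tilde\cM_j(\cdot+\tau_j)$ of the rescaled flows subconverges to an $F$-stationary varifold, not merely to some Brakke flow. The paper then runs a continuity argument in rescaled time using the metric $d_*$ and the gap theorem of Colding--Ilmanen--Minicozzi (their Corollary 2.12): any $F$-stationary varifold with $F\leq 2-\eps$ within $2c_0$ of $\cS_n^{\mathrm{gen}}$ lies in $\cS_n^{\mathrm{gen}}$. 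Assuming for contradiction $V\in\cS_n^{\mathrm{gen}}$, the rescaled flow starts $d_*$-close to $\cS_n^{\mathrm{gen}}$ at $\tau=0$ but must eventually reach $d_*$-distance $\geq 2c_0-o(1)$, since the (non-generic) tangent flow at $(\bOh,0)$ is forced far from $\cS_n^{\mathrm{gen}}$ by the gap theorem. Taking the first exit time $\bar\tau_j\to\infty$ from the $c_0$-neighborhood and passing to a limit produces an $F$-stationary varifold $\bar V$ within $c_0$ of $\cS_n^{\mathrm{gen}}$, hence in $\cS_n^{\mathrm{gen}}$; but then $\tilde\cM_j(\bar\tau_j+\tau)$ stays $o(1)$-close to $\cS_n^{\mathrm{gen}}$ for $\tau\in[0,1]$, contradicting the maximality of $\bar\tau_j$. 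Without this use of (5) at all scales together with the gap theorem, the non-genericity of $V$ is out of reach with current technology.
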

\begin{proof}
It is clear that $\cM$ is a unit-regular cyclic integral $n$-Brakke flow in $\RR^{n+1}$ defined for $(-\infty,\infty)$. Due to Remark \ref{rema:monotonicity-nearly-constant-contr-arg} and upper-semicontinuity of density
\[
\Theta_\cM((\bOh,0)) \geq \Theta_\cM((\bOh,0),r)
\]
for all $r>0$. Thus, by monotonicity,
\[ \cM(t)  = \cM_V(t) \text{ for all } t<0, \]
where $V$ is an $F$-stationary varifold with $\lambda(V) \leq \Lambda -\eps$. Note that if $V$ is a stationary cone, this holds for all $t \in \RR$ by unit-regularity and $\Lambda-\eps < 2$, and that $V$ is non-flat  since $(\bOh,0) \in\sing \cM_j$ for all $j$.

It remains to prove that  $V \not \in \cS_n^\textrm{gen}$. This follows from \cite{ColdingIlmanenMinicozzi} as we now explain (cf.\ \cite[Theorem 0.2]{ColdingIlmanenMinicozzi}). We now recall \cite[(2.11)]{ColdingIlmanenMinicozzi}. Fix a countable dense subset $\{f_k\}$  of the unit ball in $C^0_c(\RR^{n+1})$ and set 
\[
d_*(\mu_1,\mu_2) : = \sum_k 2^{-k}\left| \int f_k e^{-\tfrac 1 4 |\cdot|^2}d\mu_1 -\int f_k e^{-\tfrac 1 4 |\cdot|^2}d\mu_2 \right|
\]
for $\mu_1,\mu_2$ Radon measures with $F(\mu_i) < \infty$. (This is denoted $d_V$ in \cite{ColdingIlmanenMinicozzi}.) One can check that $d_*$ metrizes the weak-$*$ topology on the space of Radon measures $\mu$ with $F(\mu)<\infty$. By \cite[Corollary 2.12]{ColdingIlmanenMinicozzi} there is $c_0$ so that if $V$ is an $F$-stationary varifold with $F(V) \leq 2-\eps$ and 
\begin{equation}\label{eq:CIM-rigidity}
d_*(V,\cS_n^\textrm{gen})\leq 2c_0 \qquad \implies \qquad V\in \cS_n^\textrm{gen},
\end{equation}
where the $d_*$-metric on a varifold $V$ is the metric with respect to the induced Radon measure $\Vert V \Vert$. 

Returning to the above setup, assume, for contradiction, that $V  \in \cS_n^\textrm{gen}$. For each $j$, let $\tilde\cM_j$ denote the rescaled Brakke flow around $(\bOh,0)$ at scale $1$ corresponding to $\cM_j$. We will use a few times that, by Remark \ref{rema:monotonicity-nearly-constant-contr-arg}, we have for any sequence $\tau_j \in [0,\infty)$, 
\begin{equation} \label{eq:limit-cF-flows-aux}
	\tilde\cM_j(\cdot+\tau_j) \rightharpoonup V',
\end{equation}
where $V'$ is a $F$-stationary $n$-varifold. By assumption, $V'= V$ if $\tau_j\equiv 0$.
Since $(\bOh,0) \not \in \sing_\textrm{gen}\cM_j$ for each $j$, there is a sequence $\tau_{k,j}'\to\infty$ so that \eqref{eq:limit-cF-flows-aux} gives
\[ \tilde\cM_j(\cdot+\tau_{k,j}') \rightharpoonup V_j'\not \in \cS_n^\textrm{gen} \text{ as } k \to \infty, \]
so, by \eqref{eq:CIM-rigidity}, 
\begin{equation} \label{eq:limit-cF-flows-far}
	d_*(\tilde\cM_j(\cdot+\tau_{k,j}'),\cS_n^\textrm{gen}) \geq 2c_0-o(1) \text{ as } k\to\infty.
\end{equation}
Take $\bar \tau_j \in [0, \infty]$ to be as large as possible so that
\[ d_*(\tilde\cM_j(\tau),\cS_n^\textrm{gen}) < c_0 \text{ for all } \tau \in [0,\bar \tau_j). \]
By \eqref{eq:limit-cF-flows-far}, $\bar \tau_j < \infty$ for all $j$. Moreover, \eqref{eq:limit-cF-flows-aux} with $\tau_j \equiv 0$ gives that $\bar \tau_j\to\infty$ as $j\to\infty$. Up to a subsequence, we can assume from \eqref{eq:limit-cF-flows-aux} that $\tilde\cM_j(\cdot+\bar\tau_j) \rightharpoonup \bar V$, where $\bar V$ is a $F$-stationary $n$-varifold. Then, by our choice of $\bar \tau_j$, 
\[
d_*(\bar V,\cS_n^\textrm{gen}) \leq d_*(\tilde\cM_j(\bar \tau_j-1),\cS_n^\textrm{gen}) + o(1) \leq c_0 + o(1),
\]
so $\bar V\in \cS_n^\textrm{gen}$ by \eqref{eq:CIM-rigidity}. However, this yields
\[
c_0 \leq \sup_{\tau \in [0,1]} d_*(\tilde\cM_j(\bar \tau_j+\tau),\cS_n^\textrm{gen}) \leq \sup_{\tau \in [0,1]} d_*(\tilde\cM_j(\bar \tau_j+\tau),\bar V) = o(1).
\]
(In the first inequality we used the definition of $\bar \tau_j$. In the second, we used that $\bar V \in \cS_n^\textrm{gen}$. In the third, we used that $\tilde\cM_j(\cdot+\bar\tau_j) \rightharpoonup \bar V$ as $j \to \infty$.) This is a contradiction to $c_0 > 0$, completing the proof. 
\end{proof}

\begin{lemma}\label{lemm:geo-arg-2}
Assume that $(\diamondsuit_{n,\Lambda})$ holds and take any $((\bOh,0),\cM_j)$, $(X_j',\cM_j') \in \sP(j^{-1})$ such that:
\begin{itemize}
	\item $\cM_j$, $\cM'_j $ don't cross smoothly, 
	\item $\cM_j\rightharpoonup \cM$, 
	\item $\cM_j'\rightharpoonup\cM'$,
	\item $X_j'\to  X'$,
\end{itemize}
and $V$ be as in Lemma \ref{lemm:limit-cF-flows} (applied with $\cM_j$). Then,
\begin{enumerate}
	\item[(a)] $\cM' = \cM_V$ whenever the latter is nonvanishing, and
	\item[(b)] $X' \in \spine V$.
\end{enumerate}
\end{lemma}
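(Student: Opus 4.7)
The plan is to produce a companion shrinker $V'$ for the translated sequence and apply Proposition~\ref{prop:geo-input} to the triple $(V, V', X')$. First, observe that the translated flows $\tilde\cM_j' := \cM_j' - X_j'$ satisfy $((\bOh,0), \tilde\cM_j') \in \sP(\eta_j)$ for some $\eta_j \to 0$: the condition $X_j' \in P((\bOh,0),1)$ gives $|t_j'| \leq 1$, so the regularity interval in Definition~\ref{defi:Peta0}(2) and the monotonicity tail in (5) shrink by at most a universal constant, which is absorbed by taking, e.g., $\eta_j = 1/\sqrt{j^2-1}$. Applying Lemma~\ref{lemm:limit-cF-flows} to $\tilde\cM_j' \rightharpoonup \cM' - X'$ produces a non-flat, non-generic $F$-stationary cyclic integral $n$-varifold $V'$ with $\lambda(V') \leq \Lambda - \eps$ satisfying $\cM' = \cM_{V'} + X'$ wherever the right-hand side is nonvanishing.

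Next, I would show that $\cM_V$ and $\cM_{V'} + X'$ do not cross smoothly. Since $\cM_j$ and $\cM_j'$ do not cross smoothly and are unit-regular on all of $[-\eta_j^{-2}, \infty)$, Lemma~\ref{lemm:smooth-cross} (in contrapositive, with $T = \infty$) implies that the limits $\cM$ and $\cM'$ do not cross smoothly. Suppose, toward a contradiction, that $\cM_V$ and $\cM_{V'} + X'$ did cross smoothly at $Y = (\by, s)$. If $V$ is not a cone, then $\reg \cM_V \subset \RR^{n+1} \times (-\infty, 0)$, forcing $s < 0$; similarly $s < t'$ if $V'$ is not a cone. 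Then a sufficiently small parabolic ball $P(Y, r')$ lies entirely inside the nonvanishing regions of both $\cM_V$ and $\cM_{V'} + X'$, where the identities $\cM \equiv \cM_V$ and $\cM' \equiv \cM_{V'} + X'$ from Lemma~\ref{lemm:limit-cF-flows} hold; in the cone case(s), these identities hold globally. Either way, $\cM$ and $\cM'$ would cross smoothly at $Y$, a contradiction.

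With the non-crossing in hand, I would apply Proposition~\ref{prop:geo-input} to $V$, $V'$, $X'$, invoking $V$ non-flat and non-generic, $\lambda(V), \lambda(V') < \Lambda$, and hypothesis \eqref{diamond_cond}, to conclude $V' = V$ and $X' \in \spine V$. The latter is (b). For (a), combining $V' = V$ with the defining property $\cM_V + X' = \cM_V$ of the spine yields $\cM_{V'} + X' = \cM_V$, so the identity $\cM' = \cM_{V'} + X'$ of the first step becomes $\cM' = \cM_V$ wherever $\cM_V$ is nonvanishing, as required.

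The main obstacle is the non-crossing argument in the middle paragraph. A naive attempt might directly identify $\cM$ with $\cM_V$ throughout spacetime, but these Brakke flows only agree where $\cM_V$ is nonvanishing, so one must verify on a case-by-case basis (according to whether each of $V, V'$ is a cone) that a putative smooth crossing point of $\cM_V$ and $\cM_{V'} + X'$ lies far enough below the relevant vanishing times for a whole parabolic ball to remain in the nonvanishing regime where one can transfer the crossing back to $\cM$ and $\cM'$.
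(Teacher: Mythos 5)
Your proposal is correct and follows essentially the same route as the paper: apply Lemma \ref{lemm:limit-cF-flows} to the recentered flows $\cM_j' - X_j'$ to produce $V'$, deduce from Lemma \ref{lemm:smooth-cross} that the limits $\cM$, $\cM'$ do not cross smoothly, and then invoke Proposition \ref{prop:geo-input} to get $V = V'$ and $X' \in \spine V$. The extra care you take in verifying that the translated flows lie in $\sP(\eta_j)$ and in transferring a putative smooth crossing of $\cM_V$, $\cM_{V'}+X'$ back to $\cM$, $\cM'$ (using that regular points of a non-conical shrinking flow occur strictly before the vanishing time) simply makes explicit steps the paper leaves implicit.
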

\begin{proof}
Denote $X' = (\bx', t')$. 

By Lemma \ref{lemm:limit-cF-flows} (applied to $\cM_j,\cM_j'-X'_j$), there exist two non-flat and non-generic $F$-stationary cyclic integral $n$-varifolds $V$, $V'$ with $\lambda(V), \lambda(V') \leq \Lambda-\eps$ so that
\[ \cM = \cM_V \text{ and } \cM' = \cM_{V'}(\cdot+t') + \bx', \]
whenever the right hand sides are nonvanishing. 

By Lemma \ref{lemm:smooth-cross}, $\cM$ and $\cM'$ do not cross smoothly (otherwise some $\cM_j$, $\cM_j'$ would). Hence, Proposition \ref{prop:geo-input} implies that $V=V'$ and $X'\in\spine V$, as desired.
\end{proof}

\subsection{Graphical distance estimate for nearby rescaled flows}

Going forward we fix $n\in\{2,3,\dots\}$, $\Lambda \in (0,2]$, $\eps > 0$, so that \eqref{heart_cond} and \eqref{diamond_cond} both hold. Recall that constants $\rho_0,r_0>0$ were then chosen in Proposition \ref{prop:uniform-est-spine-eig} (depending only on $n,\Lambda,\eps$).

To understand the next lemma, it may be useful to recall how $\cR_{\rho,r}$ is defined for a rescaled flow in Definition \ref{defi:rescaled-brakke-regularity-scale}. 

\begin{lemma}\label{lemm:rescaled-flows-converge-graphical}
Let $\tilde \cM_j'$, $\tilde \cM_j''$ be sequences of unit-regular cyclic integral rescaled Brakke flows in $\RR^{n+1}$, defined for $\tau \in [T_j,\infty)$ with $T_j\to-\infty$, which satisfy
\begin{itemize}
\item $\lambda(\tilde \cM_j')$, $\lambda(\tilde\cM_j'') \leq \Lambda -\eps$,
\item $\tilde\cM_j'$, $\tilde\cM_j''$ do not cross smoothly,
\item $\tilde\cM_j',\tilde\cM_j'' \rightharpoonup \tilde \cM_V$ for some $F$-stationary cyclic integral $n$-varifold $V$, where $\tilde \cM_V$ denotes the rescaling about $(\bOh, 0)$ (see Definition \ref{defi:rescaled-flow}) of the flow $\cM_V$ associated to $V$ (see Definition \ref{defi:shrinking-flow}).
\end{itemize}
Then, for sufficiently large $j$:
\begin{enumerate}
	\item[(a)] $\cR_{\rho_0,r_0}\tilde \cM_j'(\bOh,0) \neq \emptyset$.
	\item[(b)] There is an increasing sequence of relatively open sets $\tilde W_j$ exhausting $\reg \tilde\cM_V$, with the property that the portions of $\tilde \cM_j'$, $\tilde \cM_j''$ within a  fixed vertical distance from $\tilde W_j$ are normal graphs of $\tilde w_j'$, $\tilde w_j'' \in C^\infty(\tilde W_j)$, and 
	\[ \tilde w_j', \; \tilde w_j''\to 0 \text{ in } C^\infty_\textnormal{loc}(\reg \tilde\cM_V). \]
	Up to switching the unit normal of $\reg \tilde \cM_V$ we can assume that $\tilde w_j' \geq \tilde w_j''$.
	\item[(c)] There is a sequence of second order elliptic operators $L_j$ on $\tilde W_j$ so that
	\[
	( \partial_\tau - L_j) (\tilde w_j'-\tilde w_j'') = 0 \text{ on } \tilde W_j,
	\]
	\[ L_j \to L \text{ in } C^\infty_\textrm{loc}(\reg \tilde \cM_V), \]
	where $L$ is the operator from \eqref{eq:first-eig-L}.
\end{enumerate}
\end{lemma}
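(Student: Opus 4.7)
The common thread for all three parts is that the entropy bound $\Lambda - \eps < 2$ together with the weak convergence $\tilde\cM_j', \tilde\cM_j'' \rightharpoonup \tilde\cM_V$ upgrades, via Brakke's regularity theorem, to smooth convergence on the spacetime regular set of $\tilde\cM_V$. A direct unpacking of Definitions \ref{defi:shrinking-flow} and \ref{defi:rescaled-flow} also shows that $\tilde\cM_V(\tau) = V$ for every $\tau \in \RR$ (whether or not $V$ is a cone), so $\reg \tilde\cM_V = \reg V \times \RR$ is a static spacetime hypersurface.

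\emph{For (a).} Proposition \ref{prop:uniform-est-spine-eig}(a) produces $\bx \in \cR_{3\rho_0, r_0/3} V(\bOh)$, so that $(\bx,0)$ is a regular point of $\tilde\cM_V$ of regularity scale $> 3\rho_0$ lying in the spatial ball $B_{r_0/3}(\bOh)$. Since $\lambda(\tilde\cM_j') \leq \Lambda - \eps < 2$, the continuity of the regularity scale (Remark \ref{rema:reg-scale-continuity}) yields points $\bx_j \to \bx$ with $(\bx_j,0) \in \reg \tilde\cM_j'$ of regularity scale $> \rho_0$ for all $j$ large; these lie in $\cR_{\rho_0,r_0}\tilde\cM_j'(\bOh,0)$.

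\emph{For (c).} A normal graph $\tilde w$ over $\reg \tilde\cM_V$ representing a rescaled mean curvature flow satisfies a quasilinear parabolic equation whose linearization at the zero graph is precisely the $L$-operator of \eqref{eq:first-eig-L}. Subtracting the equations for $\tilde w_j'$ and $\tilde w_j''$ and applying the fundamental theorem of calculus in each coefficient produces a linear parabolic operator $L_j$ (with smooth coefficients depending on $\tilde w_j', \tilde w_j''$ and their derivatives up to order two) with $(\partial_\tau - L_j)(\tilde w_j' - \tilde w_j'') = 0$; the $C^\infty_{\mathrm{loc}}$ convergence $\tilde w_j', \tilde w_j'' \to 0$ then gives $L_j \to L$ in $C^\infty_{\mathrm{loc}}(\reg \tilde\cM_V)$.

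\emph{For (b).} The regular part $\reg V$ is connected (as in the proof of Lemma \ref{lemm:smooth-cross-stat-var}, via the Frankel property for $F$-stationary varifolds and \cite[Theorem A(ii)]{Ilmanen:strong-max}), hence so is $\reg\tilde\cM_V$; fix a nested connected relatively compact exhaustion $\tilde W_j$. Applied locally on $\tilde W_j$, the regularity argument in (a) realizes $\tilde\cM_j'$ and $\tilde\cM_j''$ as normal graphs with heights $\tilde w_j', \tilde w_j'' \in C^\infty(\tilde W_j)$ tending to $0$ in $C^\infty_{\mathrm{loc}}$. By (c), $u_j := \tilde w_j' - \tilde w_j''$ solves a linear parabolic equation on the connected set $\tilde W_j$; if $u_j$ changed sign there, the parabolic strong maximum principle together with unique continuation would force either $u_j \equiv 0$ on $\tilde W_j$ (contradicting the sign change) or the existence of an interior touching point at which $\tilde\cM_j'$ has points on both sides of $\tilde\cM_j''$ at that time slice---a smooth crossing, contradicting hypothesis. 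Reversing the choice of unit normal on $\reg\tilde\cM_V$ if necessary then yields $\tilde w_j' \geq \tilde w_j''$.

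The main obstacle I anticipate is the sign-consistency claim in (b): precluding a sign change of $\tilde w_j' - \tilde w_j''$ without producing either an impossible identification of the two flows or a smooth crossing requires simultaneously invoking connectedness of $\reg V$, the linear PDE from (c) together with strong maximum principle and unique continuation, and the precise definition of smooth crossing.
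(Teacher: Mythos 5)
Your proof is correct and takes essentially the same route as the paper, whose own proof simply cites Proposition \ref{prop:uniform-est-spine-eig} for (a) and declares (b)--(c) to follow from ``unit-regularity and standard geometric arguments'' together with connectedness of $\reg \tilde\cM_V$; your elaboration (the identification $\tilde\cM_V(\tau)\equiv V$, local regularity at multiplicity one forced by entropy $<2$, linearization of the graphical rescaled-flow equation to produce $L_j\to L$, and the strong maximum principle plus connectedness of $\reg V$ and the no-smooth-crossing hypothesis for the sign in (b)) is exactly the intended content. The one small imprecision is the citation of Remark \ref{rema:reg-scale-continuity} in (a), which is stated for smoothly converging initial data; here one should appeal directly to the unit-regularity/local regularity theorem for the weakly converging flows, as you in effect do for (b).
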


\begin{remark} \label{rema:regmv}
	If $\tilde \cM_V$ is as above, then $\reg \tilde \cM_V = \reg V \times \RR$. 
\end{remark}

\begin{proof}[Proof of Lemma \ref{lemm:rescaled-flows-converge-graphical}]
By Proposition \ref{prop:uniform-est-spine-eig}, $\cR_{2\rho_0,r_0/2}\tilde \cM_V(\bOh,0) \neq \emptyset$. This yields the first assertion. The remaining claims follow from unit-regularity and standard geometric arguments (recall that $\reg\tilde\cM_V$ is connected).
\end{proof}

Consider the follow geometric variant of the infimum of vertical distances:

\begin{definition} \label{defi:subset-distance}
	For nonempty subsets $A,B \subset \RR^{n+1}$ we write
	\[
		d(A,B) = \inf\{|\ba-\bb| : \ba \in A,\bb\in B\} .
	\]
	Note that $d(A,B) = 0$ does not mean that $A=B$. 
\end{definition}

\begin{lemma}\label{lemm:unif-Harnack}
There is $H=H(n,\Lambda,\eps)\in [1,\infty) $ with the following property. 

Consider $\tilde \cM'_j,\tilde \cM_j'',\tilde\cM_V,\tilde W_j, \tilde w_j',\tilde w_j''$ as in Lemma \ref{lemm:rescaled-flows-converge-graphical}. Assume that
\[
\tilde d_j : = d(\cR_{\rho_0,r_0}\tilde \cM_j'(\bOh,0),\supp\tilde \cM_j''(0))> 0
\]
for $j=1,2,\dots$, and define
\[
\tilde u_j : = \tilde d_j^{-1}(\tilde w_j' - \tilde w_j'').
\]
Then, after perhaps passing to a subsequence:
\begin{enumerate}
\item[(a)] $\tilde u_j \to \tilde u$ in $C^\infty_\textnormal{loc}(\reg V \times (-\infty,0))$ with $\partial_\tau \tilde u = L\tilde u$ and $\tilde u \geq 0$.
\item[(b)] At rescaled time $\tau = -1$,
\[
\sup_{\cR_{\rho_0,r_0}V (\bOh)} \tilde u(\cdot,-1) \leq H.
\]
\item[(c)] For each rescaled time $\tau_0 <-1$,
\[
\sup_{\cR_{3\rho_0,r_0/3}V(\bOh)} \tilde u(\cdot,\tau_0) \leq H \inf_{\cR_{\rho_0,r_0}V (\bOh)} \tilde u(\cdot ,\tau_0+1). 
\]
\end{enumerate}
\end{lemma}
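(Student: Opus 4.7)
The plan is to view $\tilde u_j := \tilde d_j^{-1}(\tilde w_j' - \tilde w_j'')$ as a positive solution to a linear parabolic equation on the regular-scale region and apply a uniform parabolic Harnack inequality, with the normalization pinned down by the geometric separation at rescaled time $\tau = 0$. By Lemma~\ref{lemm:rescaled-flows-converge-graphical}, $\tilde u_j \geq 0$ solves $(\partial_\tau - L_j)\tilde u_j = 0$ on $\tilde W_j$, with $L_j \to L$ smoothly on compact subsets of $\reg V \times \RR$.

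First I would pin down the normalization at $\tau = 0$. For each $\bx \in \cR_{\rho_0,r_0} V(\bOh)$, the graph point $\bp_{\bx,j} = \bx + \tilde w_j'(\bx,0)\nu(\bx)$ belongs to $\cR_{\rho_0,r_0}\tilde\cM_j'(\bOh,0)$ for large $j$ (by continuity of the regularity scale), and its distance to $\supp \tilde\cM_j''(0)$ equals $\tilde w_j'(\bx,0) - \tilde w_j''(\bx,0)$ up to a factor $1 + o(1)$ as $j \to \infty$. Combined with the lower bound $\tilde d_j$ on such distances, this yields $\tilde u_j \geq 1 - o(1)$ on $\cR_{\rho_0,r_0} V(\bOh)$ at $\tau = 0$. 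Conversely, since $\tilde d_j$ is achieved as an infimum up to $o(\tilde d_j)$, there is $\bx_j^* \in \cR_{\rho_0,r_0} V(\bOh)$ with $\tilde u_j(\bx_j^*, 0) \leq 1 + o(1)$.

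Next, I would establish a uniform parabolic Harnack inequality
\[
\sup_{\cR_{3\rho_0,r_0/3} V(\bOh)} u(\cdot, \tau_1) \leq H \inf_{\cR_{\rho_0,r_0} V(\bOh)} u(\cdot, \tau_2), \qquad \tau_2 - \tau_1 \geq 1,
\]
valid for positive solutions of $(\partial_\tau - L) u = 0$, with constant $H = H(n,\Lambda,\eps)$. The regularity scale bound $\rho_0$ gives uniform control on $|A|$ (hence on the coefficients of $L$), the radius $r_0$ bounds the drift, and the intrinsic geometry of $\reg V$ is uniformly quasi-Euclidean at scale $\rho_0$; the classical Moser/Li--Yau parabolic Harnack then applies with a uniform constant. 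Universality across the (topologically varying) possible $V$'s is handled by a contradiction-compactness argument, relying on weak compactness of $F$-stationary varifolds with entropy $\leq \Lambda - \eps < 2$.

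With this in hand, part (b) is the Harnack from $\tau_1 = -1$ to $\tau_2 = 0$ combined with $\inf_{\cR_{\rho_0,r_0}} \tilde u_j(\cdot, 0) \leq 1 + o(1)$; part (a) follows from the resulting uniform $L^\infty$ bound plus interior parabolic Schauder estimates, which upgrade the convergence to $C^\infty_\textnormal{loc}$, so that $\tilde u_j$ admits a subsequential limit $\tilde u \geq 0$ solving $\partial_\tau \tilde u = L\tilde u$; and part (c) is Harnack applied directly to the limit $\tilde u$. The main obstacle is the uniformity of $H$ across the family of possible limit varifolds $V$, which requires the compactness argument just sketched; everything else is standard once the domains are understood to have uniformly controlled geometry at the scale $\rho_0$.
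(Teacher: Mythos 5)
Your proposal is correct and follows essentially the same route as the paper: identify $\tilde u_j$ as nonnegative solutions of $(\partial_\tau - L_j)\tilde u_j = 0$ with $L_j \to L$, pin the normalization at $\tau=0$ via the equivalence of graphical separation and distance (the paper only needs the upper bound $\inf \tilde u_j(\cdot,0) \leq 1+o(1)$ on a slightly enlarged regularity-scale set; your extra lower bound is harmless but unused), then obtain (a) from Harnack plus Schauder and (b), (c) from a Harnack-chain estimate made uniform in $V$ by a contradiction-compactness argument over $F$-stationary varifolds with entropy $\leq \Lambda - \eps < 2$, with time-translation invariance of the autonomous limit equation handling $\tau_0 \to -\infty$ in (c). This matches the paper's proof in both structure and the key point (uniformity of $H$ via compactness and connectedness of $\reg V$).
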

\begin{proof}
By Lemma \ref{lemm:rescaled-flows-converge-graphical}, the functions $\tilde u_j$ satisfy a second order parabolic PDE 
\[
(\partial_t - L_j) \tilde u_j = 0 \text{ on } \tilde W_j \to \reg \tilde \cM_V, 
\]
\[ L_j \to L \text{ in } C^\infty_\textrm{loc}(\reg\tilde\cM_V). \]
Recall, per Remark \ref{rema:regmv}, that $\reg \tilde \cM_V = \reg V \times \RR$. 
By a standard geometric argument relating graphical separation to distance we find
\begin{equation}\label{eq:graph-dist-relate}
\inf_{\cR_{\rho_0/2,2r_0}V(\bOh)} \tilde u_j(\cdot,0) \leq 1+o(1). 
\end{equation}
We can now apply parabolic Harnack and Schauder theory in a standard way to conclude (a). The proofs of (b) and (c) will follow similarly except we must argue that the constant $H$ can be chosen to only depend on $n,\Lambda,\eps$. 

We first consider (b). Suppose for contradiction that for each $k=1,2,\dots,$ there is a sequence $\tilde \cM'_{j,k},\tilde \cM_{j,k}'',\tilde\cM_{V_k},\tilde W_{j,k}, \tilde w_{j,k}',\tilde w_{j,k}''$, with $j=1,2,\dots$, as in Lemma \ref{lemm:rescaled-flows-converge-graphical} so that 
\[
\tilde u_{j,k} = \tilde d_{j,k}^{-1}(\tilde w_{j,k}'-\tilde w_{j,k}'')
\]
converges as $j\to\infty$ in $C^\infty_\textrm{loc}(\reg V_k\times (-\infty,0))$ to $\tilde u_k$ with $\partial_\tau \tilde u_k = L_{V_k}\tilde u_k$ but so that there are $\bx_k \in \cR_{\rho_0,r_0}V_k(\bOh)$ with 
\begin{equation}\label{eq:unif-Harnack-unif-contra2}
 \tilde u_k(\bx_k,-1) \geq 2k.
\end{equation}
We can pass to a subsequence so that $V_k\rightharpoonup V$. Choose $j=j(k)$ so that 
\begin{equation}\label{eq:unif-Harnack-unif-contra2-BIG}
\tilde u_{j(k),k}(\bx_k,-1) \geq k.
\end{equation}
Using \eqref{eq:graph-dist-relate} we also choose $\by_k \in \cR_{\rho_0/2,2r_0}V_k$ with 
\begin{equation}\label{eq:unif-Harnack-unif-contra2-SMALL}
\tilde u_{j(k),k}(\by_k,0) \leq 2.
\end{equation}
Up to passing to a subsequence, $\bx_k\to \bx,\by_k\to\by \in \reg V$. Writing $L$ for the $L$-operator on $\reg V$, we can choose a finite chain of overlapping coordinate balls (from $\bx$ to $\by$ in $\reg V$ so that in each ball, the $L$-operator has controlled ellipticity and coefficients. Since the $L_j$ operator is smoothly converging to the $L$ operator on $\reg \tilde \cM_V$, the same thing holds for $L_j$. Now, we can apply parabolic Harnack on the chain of balls in the usual way to bound
\[
 \tilde u_{j(k),k}(\bx_k,-1) \leq \tilde H \tilde u_{j(k),k}(\by_k,0),
\]
where $\tilde H$ depends only on $V$ and the chosen chain of balls, but not on $k$. However, this contradicts \eqref{eq:unif-Harnack-unif-contra2-BIG} and \eqref{eq:unif-Harnack-unif-contra2-SMALL}. This proves (b). 

Finally, to prove (c) we argue in the same way, except we replace \eqref{eq:unif-Harnack-unif-contra2} by 
\[
\tilde u_k(\bx_k,\tau_k) \geq (2k) \tilde u_k(\by_k,\tau_k + 1).
\]
for some $\tau_k<-1$. The argument is the same as before except we shift rescaled time by $+\tau_k$ to account for the possibility that $\tau_k\to-\infty$. Since the rescaled flow of a shrinker is stationary, this does not change the above analysis. This completes the proof.
\end{proof}

\subsection{Separation and packing estimates}

We first need an elementary growth lemma about nonnegative solutions of $(\partial_\tau - L) \tilde u = 0$ on compact subdomains of shrinkers, which we will then ``de-linearize.'' Below is a parabolic analogue of the linear estimates obtained in \cite{Simon:asymptotic-decay} and \cite[Appendix A]{Wang:smoothing}. 

\begin{lemma} \label{lemm:lin-growth}
Let $\Sigma\subset \RR^{n+1}$ be a compact self-shrinker with (possibly empty) boundary. Let $\varphi>0$ denote the first eigenfunction of the $L$-operator on $\Sigma$ with Dirichlet boundary conditions. Suppose that $\tilde u \in C^\infty(\Sigma\times [\tau_0,\tau_1])$ satisfies
\[ \partial_\tau \tilde  u = L\tilde u, \; \tilde u\geq 0. \]
Then, for all $\tau \in [\tau_0, \tau_1]$ we have
\[
\int_\Sigma \tilde u(\bx,\tau) \varphi(\bx) e^{-\tfrac 14 |\bx|^2} \, d\bx \geq e^{-\mu(\Sigma) \cdot (\tau-\tau_0)} \int_\Sigma \tilde u(\bx,\tau_0) \varphi(\bx) e^{-\tfrac 14 |\bx|^2} \, d\bx.
\]
\end{lemma}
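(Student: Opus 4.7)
The plan is to test the PDE $\partial_\tau \tilde u = L \tilde u$ against the weighted Dirichlet first eigenfunction and reduce the problem to an ODI. Concretely, I would define
\[
I(\tau) := \int_\Sigma \tilde u(\bx,\tau) \, \varphi(\bx) \, e^{-\tfrac14 |\bx|^2} \, d\bx, \qquad \tau \in [\tau_0,\tau_1],
\]
and show that $I'(\tau) \geq -\mu(\Sigma) \, I(\tau)$, from which the desired inequality follows by integrating the ODI $(\log I)' \geq -\mu(\Sigma)$.

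To carry this out, I would differentiate under the integral, substitute the PDE, and integrate by parts in the weighted inner product $\langle f,g\rangle_W = \int_\Sigma f g \, e^{-|\cdot|^2/4}$, in which $L$ is formally symmetric (since $L = \mathcal{L} + |A|^2 + \tfrac12$ where $\mathcal{L}$ is the Ornstein--Uhlenbeck drift Laplacian that is symmetric in this weighted space). The eigenvalue characterization in \eqref{eq:first-eig-L}, after a standard integration by parts, gives $L\varphi = -\mu(\Sigma)\varphi$, so on the interior one has
\[
\int_\Sigma (L\tilde u)\,\varphi\, e^{-\tfrac14|\cdot|^2} = \int_\Sigma \tilde u \,(L\varphi)\, e^{-\tfrac14|\cdot|^2} + \mathcal{B}(\tau) = -\mu(\Sigma) I(\tau) + \mathcal{B}(\tau),
\]
where $\mathcal{B}(\tau)$ is the boundary term collected along $\partial\Sigma$.

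The only step requiring care is the sign of $\mathcal{B}(\tau)$. The boundary contribution is
\[
\mathcal{B}(\tau) = \int_{\partial \Sigma} \bigl(\varphi\, \partial_\nu \tilde u - \tilde u\, \partial_\nu \varphi\bigr) e^{-\tfrac14 |\cdot|^2},
\]
with $\nu$ the outward conormal along $\partial\Sigma$. The Dirichlet condition $\varphi|_{\partial\Sigma} = 0$ kills the first term. Since $\varphi > 0$ in the interior and vanishes on $\partial\Sigma$, the Hopf lemma gives $\partial_\nu \varphi \leq 0$, while $\tilde u \geq 0$ by hypothesis, so $-\tilde u\, \partial_\nu \varphi \geq 0$ pointwise and thus $\mathcal{B}(\tau) \geq 0$. (If $\partial \Sigma = \emptyset$ the boundary term is absent and one has equality.) Combining these steps yields
\[
I'(\tau) = \int_\Sigma (L\tilde u)\, \varphi \, e^{-\tfrac14|\cdot|^2} \geq -\mu(\Sigma)\, I(\tau),
\]
which integrates to the claimed bound. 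The main (mild) obstacle is simply keeping the boundary term sign conventions straight; everything else is standard.
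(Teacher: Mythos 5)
Your proposal is correct and matches the paper's proof essentially line for line: both define the weighted pairing with the first Dirichlet eigenfunction, differentiate, integrate by parts to move $L$ onto $\varphi$, and use $\tilde u \geq 0$ together with $\partial_\zeta \varphi \leq 0$ at the boundary (which follows already from $\varphi > 0$ inside and $\varphi = 0$ on $\partial\Sigma$, without needing the full Hopf lemma) to discard the boundary term with the right sign.
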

\begin{proof}
Set 
\[
\tilde U(\tau) : = \int_\Sigma \tilde u(\bx,\tau) \varphi(\bx) e^{-\tfrac 14 |\bx|^2} \, d\bx. 
\]
Differentiating under the integral sign and then integrating by parts we find
\begin{align*}
\tilde U'(\tau) & = \int_\Sigma (L\tilde u(\bx,\tau)) \varphi(\bx) e^{-\tfrac 1 4|\bx|^2} \\
& = \int_\Sigma \tilde  u(\bx,\tau) (L \varphi(\bx)) e^{-\tfrac 1 4|\bx|^2} - \int_{\partial\Sigma} \tilde  u(\bx,\tau) (\partial_\zeta \varphi(\bx)) e^{-\tfrac 1 4|\bx|^2} \\
& \geq - \mu(\Sigma) \int_\Sigma  \tilde  u(\bx,\tau) \varphi(\bx) e^{-\tfrac 1 4|\bx|^2} = -\mu(\Sigma) \tilde U(\tau).
\end{align*}
Above, $\zeta$ is the outward pointing unit conormal and we used that $\partial_\zeta \varphi \leq 0$ due to the Dirichlet boundary conditions and $\varphi \geq 0$. Integrating this proves the assertion. 
\end{proof}

Now continue to fix $n\in\{2,3,\dots,\}$, $\Lambda \in (0,2]$, $\eps>0$ so that \eqref{heart_cond} and \eqref{diamond_cond} hold. 

\begin{remark}\label{rema:constants}
We have chosen several other constants depending only on $n,\Lambda,\eps$, as we recall here:
\begin{enumerate}
	\item The covering constant $C$ was fixed in Lemma \ref{lemm:covering}. 
	\item The gain of decay constant $\kappa$ and regularity scale constants $\rho_0,r_0$ were fixed in Proposition \ref{prop:uniform-est-spine-eig}. 
	\item The Harnack constant $H$ was fixed in Lemma \ref{lemm:unif-Harnack}. 
\end{enumerate}
We now fix the scale size constant $\gamma \in (0,1)$ by 
\begin{equation}\label{eq:choice-gamma}
 \gamma =  \min\left\{\left(\tfrac{1}{\sqrt{5}} H^{-2} (\tfrac{5}{4e})^{2\kappa + n+2}\right)^{1/\kappa}, (2C)^{-1/\kappa}, \tfrac{\sqrt{3}}{2e}\right\}
\end{equation}
Note that $\gamma=\gamma(n,\Lambda,\eps) < \tfrac 12$. 
\end{remark}

With these preparations, we can now discuss the key covering/separation result. To state it, it is convenient to consider the projection operator:
\[ \pi : (X,\cM) \mapsto X. \]

\begin{proposition}\label{prop:main-cover-sep}
There exists $\eta_0 \in (0,1)$ with the following property. 

Let $\cQ \subset \sP(\eta_0)$ be such that:
\begin{itemize}
\item $(\bOh,0) \in \pi(\cQ)$, and
\item if $(X',\cM')$ and $(X'',\cM'') \in  \cQ$ then $\cM'$ and $\cM''$ don't cross smoothly. 
\end{itemize}
Then there exist $D \in \NN$ with $D \leq n+2$ and points $X_1=(\bx_1,t_1), \ldots, X_K=(\bx_K,t_K) \in \pi(\cQ)$ such that:
\begin{enumerate}
\item[(a)] $\pi(\cQ) \subset \cup_{i=1}^K P(X_i,\gamma)$ with $K \leq C \gamma^{-D}$;
\item[(b)] for $(X',\cM') \in \cQ \cap \pi^{-1}(P(X_i,\gamma))$ it holds that
\begin{align*}
	\cR_{2\rho_0,2r_0}\cM'(\bOh,-4)   & \neq \emptyset, \\
	\cR_{2\gamma\rho_0,2\gamma r_0}\cM'(\bx_i,t_i-4\gamma^2)  & \neq \emptyset; \text{ and,}
\end{align*}
\item[(c)] for $(X',\cM'), (X'',\cM'') \in \cQ \cap \pi^{-1}(P(X_i,\gamma))$ we have:
\begin{align*}
& \gamma^{- (\kappa+D)} \cdot 2^{-1} d(\cR_{2\rho_0,2r_0}\cM'(\bOh,-4) , \supp\cM''(-4) ) \\
& \qquad \leq (2\gamma)^{-1} d(\cR_{2\gamma\rho_0,2\gamma r_0}\cM'(\bx_i,t_i-4\gamma^2)  , \supp\cM''(t_i-4\gamma^2)  ). 
\end{align*}
\end{enumerate}
\end{proposition}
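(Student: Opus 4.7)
The plan is to argue by contradiction, assuming no $\eta_0$ works, so that for each $j$ there is $\cQ_j \subset \sP(j^{-1})$ for which no valid choice of $D \leq n+2$ and $X_1,\ldots,X_K \in \pi(\cQ_j)$ satisfies (a)--(c). Fix $\cM_j$ with $(\bOh, 0) \in \pi(\cQ_j)$. By Lemma \ref{lemm:limit-cF-flows}, along a subsequence $\cM_j \rightharpoonup \cM_V$ where $V$ is a non-flat non-generic $F$-stationary cyclic integral $n$-varifold with $\lambda(V) \leq \Lambda-\eps$; set $\Pi := \spine V$ and $D := D(V) \leq n+2$ as the dimension parameter. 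Applying Lemma \ref{lemm:geo-arg-2} to $\cM_j$ paired with any other element of $\cQ_j$ shows that $\pi(\cQ_j) \subset U_\gamma(\Pi) \cap P((\bOh,0),1)$ for large $j$, and Lemma \ref{lemm:covering} then produces the covering in (a) by $K \leq C\gamma^{-D}$ parabolic balls with centers $X_{i,j} \in \pi(\cQ_j)$; after diagonalization we may assume $X_{i,j} \to X_i^* \in \Pi$. For (b), Proposition \ref{prop:uniform-est-spine-eig}(a) provides $\cR_{3\rho_0, r_0/3}V(\bOh) \neq \emptyset$, and combining this with the scale invariance of $\cR$ (Remark \ref{rema:reg-scale}), the $\Pi$-invariance of $\cM_V$, and continuity of the regularity scale (Remark \ref{rema:reg-scale-continuity}) transfers both regularity-scale assertions to $\cM'$ for all large $j$.

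The substantive content is (c). Suppose (c) fails along a sequence of pairs $(X'_j, \cM'_j), (X''_j, \cM''_j) \in \cQ_j \cap \pi^{-1}(P(X_{i,j}, \gamma))$ for some indices $i(j)$, so that $d_{1,j}/d_{2,j} > \gamma^{\kappa+D-1}$, where $d_{1,j}, d_{2,j}$ denote the scale-$r_0$ and scale-$\gamma r_0$ distances in the statement. Rescale $\cM'_j, \cM''_j$ around $(\bx_{i,j}, t_{i,j})$ at scale $2\gamma$ to obtain $\tilde\cM'_j, \tilde\cM''_j$; by $\Pi$-invariance of $\cM_V$ and $X_i^* \in \Pi$, both sequences converge to $\tilde\cM_V$, the rescaling of $\cM_V$ about $(\bOh, 0)$ at scale $2\gamma$. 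Lemma \ref{lemm:rescaled-flows-converge-graphical} writes the rescaled flows as graphs $\tilde w'_j \geq \tilde w''_j$ over $\reg \tilde\cM_V$, and Lemma \ref{lemm:unif-Harnack} with normalization $\tilde d_j := d_{2,j}/(2\gamma)$ yields a nonnegative limit $\tilde u$ of $\tilde d_j^{-1}(\tilde w'_j - \tilde w''_j)$ solving $\partial_\tau \tilde u = L\tilde u$ on $\reg V \times (-\infty, 0)$, with $\sup_{\cR_{\rho_0,r_0}V(\bOh)}\tilde u(\cdot, -1) \leq H$ and the backward Harnack inequality.

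To close, I would compare two bounds on $\sup \tilde u(\cdot, \tau^*)$ at the rescaled time $\tau^* \in (2\log\gamma-1, 2\log\gamma+1) \subset (-\infty, -1)$ corresponding (modulo an $O(1)$ shift from $t_{i,j} \neq 0$) to real time $-4$ in the rescaling. Undoing the rescaling, the rescaling factor is $|t_{i,j}+4|^{-1/2} \geq 1/\sqrt 5$, so the graphical separation at $\tau^*$ is at least $d_{1,j}/\sqrt 5$, giving $\sup \tilde u(\cdot, \tau^*) \geq (2/\sqrt 5) \gamma d_{1,j}/d_{2,j} > (2/\sqrt 5)\gamma^{\kappa+D}$ under the contradiction hypothesis. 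For the upper bound, choose a compact $\Omega$ with $\cR_{3\rho_0, r_0/3}V(\bOh) \subset \Omega \subset \cR_{\rho_0, r_0}V(\bOh)$ and $2\mu(\Omega) + D < -2\kappa$ via Proposition \ref{prop:uniform-est-spine-eig}(b), then use Lemma \ref{lemm:lin-growth} on $\Omega$ to propagate $\int_\Omega \tilde u\, \varphi\, e^{-|\cdot|^2/4}$ backward in rescaled time with factor $e^{\mu(\Omega)\Delta\tau}$, bound $\int \tilde u(\cdot, -1)\varphi e^{-|\cdot|^2/4} \lesssim H$ via Lemma \ref{lemm:unif-Harnack}(b), and pass from the integral at $\tau^*+1$ to the sup at $\tau^*$ using Lemma \ref{lemm:unif-Harnack}(c); this yields $\sup \tilde u(\cdot, \tau^*) \leq CH^2 \gamma^{-2\mu(\Omega)}$ for some $C = C(n, \Lambda, \eps)$. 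Combining gives $(2/\sqrt 5)\gamma^{\kappa+D+2\mu(\Omega)} \leq CH^2$, and since $\kappa+D+2\mu(\Omega) < -\kappa$ we get $\gamma^{-\kappa} \leq (\sqrt 5 C/2)H^2$, contradicting \eqref{eq:choice-gamma} whose first argument (the factor $(1/\sqrt 5)H^{-2}(5/(4e))^{2\kappa+n+2}$) is precisely engineered so that $\gamma^{-\kappa}$ dominates these constants.

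The main obstacle is this quantitative bookkeeping: one must explicitly track the Harnack constant $H$ (appearing in both bounds), the eigenfunction weight, the $O(1)$ slack in $\tau^*$, and the finite-$j$ translation error from rescaling about $X_{i,j}$ rather than $X_i^*$ (harmless in the $\Pi$-invariant limit but needing control via continuity). All of these must fit inside the explicit formula \eqref{eq:choice-gamma}, whose three candidates correspond to the three pinch points (a) (the covering constant $C$), (c) via the eigenvalue estimate (the $(5/(4e))^{2\kappa+n+2}$ factor), and the domain condition $\gamma < e^{-1/2}$ needed to put $\tau^* + 1 < 0$.
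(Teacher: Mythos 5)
Your proposal is correct and follows essentially the same route as the paper: compactness/contradiction via Lemmas \ref{lemm:limit-cF-flows} and \ref{lemm:geo-arg-2} to place $\pi(\cQ_j)$ in $U_\gamma(\spine V)$, Lemma \ref{lemm:covering} for (a), rescaled-flow convergence for (b), and for (c) the normalization by the small-scale separation followed by Lemma \ref{lemm:unif-Harnack} and the de-linearized growth estimate (Lemma \ref{lemm:lin-growth}) on a domain $\Omega$ supplied by Proposition \ref{prop:uniform-est-spine-eig}(b), closed out by the choice \eqref{eq:choice-gamma}. The only differences are presentational: the paper propagates a lower bound on $\inf_{\cR_{3\rho_0,r_0/3}V(\bOh)}\tilde u(\cdot,\tau_0)$ forward in time rather than comparing two bounds on a supremum at $\tau^*$, and it carries out explicitly the spatial re-centering from $\bOh$ to $\bx_j$ that you subsume under ``bookkeeping'' --- but these are the same inequalities read in the opposite direction.
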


\begin{remark}
	One could have replaced $\gamma^{-(\kappa + D)}$ by $\gamma^{-((2-\delta)\kappa + D)}$ with any $\delta \in (0, 2)$ in (c); cf. Proposition \ref{prop:uniform-est-spine-eig} (b).
\end{remark}

\begin{proof}[Proof of Proposition \ref{prop:main-cover-sep}]
Suppose for contradiction that no such $\eta_0$ exists. Then there would exist a sequence $\cQ_j \subset  \sP(j^{-1})$, with $j$ large, so that
\begin{itemize}
\item there is $((\bOh,0),\cM_j) \in \cQ_j$ and
\item if $(X',\cM'),(X'',\cM'') \in  \cQ_j$ then $\cM'$ and $\cM''$ don't cross smoothly,
\end{itemize}
but that there is no $D \leq n+2$ and a collection of points in $\pi(\cQ_j)$ so that (a), (b), and (c) hold.

Pass to a subsequence so that $\cM_j \rightharpoonup \cM$. By Lemma \ref{lemm:limit-cF-flows}, there exists a non-planar and non-generic $F$-stationary cyclic integral $n$-varifold $V$ with $\lambda(V) \leq \Lambda - \eps$ and 
\[ \cM = \cM_V \]
whenever the right hand side is nonvanishing. Set $\Pi = \spine V$ and $D=D(\Pi)$. Note that $D \leq n+2$ by Definition \ref{defi:spacetime-spine}.

For $(X_j',\cM_j') \in \cQ_j $, we can pass to a subsequence so that $X_j'\to X'$ and $\cM_j'\rightharpoonup \cM'$. Then, by Lemma \ref{lemm:geo-arg-2}, $X' \in \Pi$ and
\[ \cM'=\cM = \cM_V \]
whenever the right hand side is nonvanishing. After discarding finitely many $j$, we can assume that
\[ \pi(\cQ_j) \subset U_\gamma(\Pi) \cap P((\bOh,0),1). \]
By Lemma \ref{lemm:covering} we can thus find $X_{j,1},\dots,X_{j,K_j} \in \pi(\cQ_j)$ so that
\[ \pi(\cQ_j) \subset \cup_{i=1}^{K_j}P(X_{j,i},\gamma) \text{ and } K_j \leq C\gamma^{-D}. \]
Thus, (a) holds for this collection of points. 

Passing to a further subsequence (not relabeled) if necessary, we can assume that either (b) or (c) fails for every $j$ and some $i=i(j)$. We will then derive a contradiction. For notational simplicity, we will write $X_j=(\bx_j,t_j)$ instead of $X_{j,i}$. 

We start with (b). Take $(X'_j,\cM'_j) \in \cQ_j \cap \pi^{-1}(P(X_{j},\gamma))$. We claim that, for all large $j$,
\[
\cR_{2\rho_0,2r_0}\cM_j'(\bOh,-4) \neq \emptyset \text{ and } \cR_{2\gamma\rho_0,2\gamma r_0}\cM_j'( \bx_j,t_j-4\gamma^2)  \neq \emptyset.
\]
We just prove the latter statement; the former follows similarly. Let $\tilde \cM_j'$ denote the rescaled Brakke flow corresponding to $\cM_j'$ centered at $X_{j}=(\bx_j,t_j)$ at scale $2\gamma$ (see Definition \ref{defi:rescaled-flow}). In other words, we set
\[
\tilde \cM_j'(\tau) = (2\gamma)^{-1}e^{\tau/2} (\cM_j'(t_{j}-4\gamma^2 e^{-\tau}) - \bx_{j}). 
\]
Since $\gamma$ is fixed, we find that $\tilde\cM_j'\rightharpoonup \tilde\cM_V$. Lemma \ref{lemm:rescaled-flows-converge-graphical} implies that, for $j$ large,
\[
\cR_{\rho_0,r_0}\tilde\cM_j'(\bOh,0)  \neq 0.
\]
This yields the desired nonemptiness after returning to the unrescaled flow (see Remark \ref{rema:reg-scale}). This is a contradiction to our assumption that (b) failed for all $j$. 

We can thus assume (c) fails for all $j$. By our setup, there exist
\[
(X_j',\cM_j'), (X_j'',\cM_j'') \in \cQ_j \cap \pi^{-1}(P(X_{j},\gamma))
\]
so that
\begin{multline}\label{eq:failure-separation-prelim}
(2\gamma)^{-1} d(\cR_{2\gamma\rho_0,2\gamma r_0}\cM_j'( \bx_j,t_{j}-4\gamma^2) , \supp  \cM_j''(t_{j}-4\gamma^2))\\
< \gamma^{-(\kappa + D)} \cdot 2^{-1}d(\cR_{2\rho_0,2r_0}\cM_j'(\bOh,-4) ,\supp \cM_j''(-4)). 
\end{multline}
We now derive a contradiction to \eqref{eq:failure-separation-prelim} (this will complete the proof of the proposition). It is important to shift the spatial center on the right-hand side of \eqref{eq:failure-separation-prelim}. Below, recall that $r_0 > 2$ by Proposition \ref{prop:uniform-est-spine-eig} and $|\bx_j| < 1$ by Definition \ref{defi:Peta0}, so $2r_0 - |\bx_j| > 0$. We note that
\[
\cR_{2\rho_0,2r_0 - |\bx_j|} \cM_j'(\bx_j,-4) \subset \cR_{2\rho_0,2r_0}\cM_j'(\bOh,-4) 
\]
by Remark \ref{rema:reg-scale}. Thus, we have\footnote{Proposition \ref{prop:uniform-est-spine-eig} gives that $\cR_{3\rho_0,r_0/3}V(\bOh) \neq \emptyset$; this ensures that the sets on the right-hand-side of \eqref{eq:failure-separation}, \eqref{eq:failure-separation-rescaled-1}, \eqref{eq:failure-separation-rescaled-2} are nonempty for large $j$.}
\begin{multline}\label{eq:failure-separation}
(2\gamma)^{-1} d(\cR_{2\gamma\rho_0,2\gamma r_0}\cM_j'( \bx_j,t_{j}-4\gamma^2) , \supp \cM_j''(t_{j}-4\gamma^2))\\
< \gamma^{-(\kappa + D)} \cdot 2^{-1}d(\cR_{2\rho_0,2r_0-|\bx_j|}\cM_j'(\bx_j,-4) ,\supp\cM_j''(-4)). 
\end{multline}
Let's translate \eqref{eq:failure-separation} into a statement about  the rescaled flows $\tilde\cM_j',\tilde\cM_j''$ around $X_j$ at scale $2\gamma$. 
Note that under this rescaling, non-rescaled time turns into rescaled time as:
\[ t = t_j-4\gamma^2 \mapsto \tau=0, \]
\[ t=-4 \mapsto \tau = \tau_j : =  2\log \gamma - \log (1+t_{j}/4). \]
In particular, using $|t_j| \leq 1$, the dilation factor in the rescaled flow at $\tau_j$  satisfies 
\[
(2\gamma)^{-1} e^{\tau_j/2} =  \tfrac 12 (1+t_j/4)^{-\frac 12} : = \beta_j \in (\tfrac{1}{\sqrt{5}},\tfrac{1}{\sqrt{3}}). 
\]
As such, \eqref{eq:failure-separation} rescales  (see Remark \ref{rema:reg-scale}) to yield
\begin{multline}\label{eq:failure-separation-rescaled-1}
d(\cR_{\rho_0,r_0}\tilde \cM_j'(\bOh,0) , \supp\tilde \cM_j''(0) )\\
< \gamma^{-(\kappa + D)}  \cdot (2 \beta_j)^{-1}   d(\cR_{{2}\beta_j \rho_0,{2}\beta_j(r_0  - |\bx_j|/2)}\tilde \cM_j'(\bOh,\tau_j)  ,\supp\tilde \cM_j''(\tau_j)).
\end{multline}
We now observe that \eqref{eq:failure-separation-rescaled-1} implies:
\begin{multline}\label{eq:failure-separation-rescaled-2}
d(\cR_{\rho_0,r_0}\tilde \cM_j'(\bOh,0) , \supp\tilde \cM_j''(0) )\\
< \gamma^{-(\kappa + D)}  \cdot \tfrac{\sqrt{5}}{2} \cdot d(\cR_{2 \rho_0,r_0/2}\tilde \cM_j'(\bOh,\tau_j)  ,\supp\tilde \cM_j''(\tau_j)).
\end{multline}
Indeed, first note $\beta_j > \tfrac{1}{\sqrt{5}}$ obviously implies the scaling factor bound
\[ (2 \beta_j)^{-1} < \tfrac{\sqrt{5}}{2}. \]
Moreover, we will also have
\[
\cR_{2\rho_0,r_0/2} \tilde\cM_j'(\bOh,\tau_j) \subset \cR_{2 \beta_j \rho_0, 2\beta_j(r_0  - |\bx_j|/2)}\tilde \cM_j'(\bOh,\tau_j) .
\]
by Remark \ref{rema:reg-scale} because $\beta_j \in (\tfrac{1}{\sqrt{5}}, \tfrac{1}{\sqrt{3}})$ implies both
\[
	2 \beta_j \rho_0 < \tfrac{2}{\sqrt{3}} \rho_0 < 2 \rho_0,
\]
and, also using $r_0 > 2$, $|\bx_j| \leq 1$, that
\[
	2 \beta_j(r_0 - |\bx_j|/2) > 2 \tfrac{1}{\sqrt{5}} \cdot \tfrac{3}{4} r_0 = \tfrac{3}{2\sqrt{5}} r_0 > r_0/2.
\]
This completes the proof that \eqref{eq:failure-separation-rescaled-1} implies \eqref{eq:failure-separation-rescaled-2}.

Denote
\[ \tilde d_j : = d(\cR_{\rho_0,r_0}\tilde \cM_j'(\bOh,0) , \supp\tilde \cM_j''(0) ). \]
Observe that \eqref{eq:failure-separation-rescaled-2} and {the avoidance principle} imply $\tilde d_j >0$.  

We pass to a further subsequence so that $\tau_j \to \tau_0$ with $\tau_0 - 2\log \gamma \in [\log \tfrac 45,\log\tfrac 43]$. Note that \eqref{eq:choice-gamma} guarantees that $\tau_0\leq -2$. Note also that
\[ \tilde \cM_j',\tilde\cM_j'' \rightharpoonup \tilde\cM_V \]
satisfy the hypotheses of Lemma \ref{lemm:rescaled-flows-converge-graphical}. Let $\tilde W_j$, $\tilde w_j'$, $\tilde w_j''$ be as in that lemma with $\tilde w_j' \geq \tilde w_j''$. 
Apply Lemma \ref{lemm:unif-Harnack} to pass
\[ \tilde u_j : = \tilde d_j^{-1}(\tilde w_j'-\tilde w_j'') \]
to a $C^\infty_\textrm{loc}(\reg V\times (-\infty,0))$ limit that satisfies 
\begin{equation}\label{eq:first-appl-harnack}
 \sup_{\cR_{\rho_0,r_0}V (\bOh)} \tilde u(\cdot,-1) \leq H. 
\end{equation}
and
\begin{equation}\label{eq:second-appl-harnack}
\sup_{\cR_{3 \rho_0,r_0/3}V(\bOh)} \tilde u(\cdot,\tau_0) \leq H \inf_{\cR_{\rho_0,r_0}V (\bOh)} \tilde u(\cdot,\tau_0+1). 
\end{equation}
Finally, we note that using $\theta \in (2, 3)$, \eqref{eq:failure-separation-rescaled-2} passes to the limit to yield
\begin{equation}\label{eq:lower-bd-tau0+1}
\inf_{\cR_{3 \rho_0,r_0/3}V(\bOh)} \tilde u(\cdot,\tau_0) \geq  \tfrac{2}{\sqrt{5}} \gamma^{\kappa+D}. 
\end{equation}
Combining \eqref{eq:second-appl-harnack}  and \eqref{eq:lower-bd-tau0+1} we find
\begin{equation}\label{eq:lower-bd-tau0+1-final}
 \tfrac{2}{\sqrt{5}}  H^{-1} \gamma^{\kappa+D} \leq \inf_{\cR_{\rho_0,r_0}V(\bOh)} \tilde u(\cdot,\tau_0+1). 
\end{equation}
Fix a relatively open set $\Omega \subset \reg V$ with smooth boundary such that
\[ \cR_{2\rho_0,r_0/2}V(\bOh)  \subset \Omega \subset \cR_{\rho_0,r_0}V (\bOh). \]
By Proposition \ref{prop:uniform-est-spine-eig},
\[
2 \mu(\Omega)  < -2 \kappa - D : = 2 \mu_0.
\]
We now apply Lemma \ref{lemm:lin-growth} to $\Sigma = \overline \Omega$ on the interval $[\tau_0+1,-1]$ to find 
\begin{equation}\label{eq:appl-growth-estimate}
\int_\Omega \tilde u(\cdot,-1) \varphi e^{-\tfrac 1 4 |\cdot|^2} \geq e^{\mu_0(\tau_0+2)} \int_\Omega \tilde u(\cdot,\tau_0+1) \varphi e^{-\tfrac 1 4 |\cdot|^2},
\end{equation}
where $\varphi>0$ is the first Dirichlet eigenfunction of $L$ on $\Omega$ and $\int_\Omega \varphi e^{-\tfrac 14|\cdot|^2} = 1$. We then claim that we have the following chain of inequalities:
\begin{align*}
H & \geq  \sup_{\cR_{\rho_0,r_0}(\bOh)}\tilde u(\cdot,-1) \\
&\geq \int_\Omega \tilde u(\cdot,-1) \varphi e^{-\tfrac 1 4 |\cdot|^2} \\
& \geq e^{\mu_0(\tau_0+2)} \int_\Omega \tilde u(\cdot,\tau_0+1) \varphi e^{-\tfrac 1 4 |\cdot|^2}\\
& \geq e^{\mu_0(\tau_0+2)} \inf_{\cR_{\rho_0,r_0}V(\bOh)} \tilde u(\cdot,\tau_0+1)\\
& \geq\tfrac{2}{\sqrt{5}}  H^{-1} \gamma^{\kappa + D}  e^{\mu_0(\tau_0+2)} \\
& \geq\tfrac{2}{\sqrt{5}}  H^{-1} \gamma^{\kappa + D + 2\mu_0} (\tfrac{4e}{5})^{2\mu_0}.
\end{align*}
The first inequality follows from \eqref{eq:first-appl-harnack}, the second and fourth from the normalization of $\varphi$ and $\Omega \subset \cR_{\rho_0,r_0}(\bOh)$, the third from \eqref{eq:appl-growth-estimate}, the second to last follows from \eqref{eq:lower-bd-tau0+1-final} and the final inequality from $e^{\tau_0} \geq \tfrac 4 5 \gamma^2 $.

Noting that $D\leq n+2$ (by Definition \ref{defi:spacetime-spine}), we have $2\mu_0 \geq - 2\kappa - n-2$. Thus, we can rearrange the previous expression to read
\[
\tfrac{2}{\sqrt{5}}(\tfrac{5}{4e})^{2\kappa + n+2} \leq  H^2 \gamma^{\kappa}.
\]
This contradicts \eqref{eq:choice-gamma}, completing the proof. 
\end{proof}

\section{Density drop}\label{sec:density-drop}

\subsection{Setup} 

We continue to fix $n\in\{2,3,\dots,\}$, $\Lambda \in (0,2]$, $\eps>0$ so that \eqref{heart_cond} and \eqref{diamond_cond} hold. Recall the definition of the constants $C,\kappa,\rho_0,H$ (cf.\ Remark \ref{rema:constants}). We have also fixed $\gamma$ in \eqref{eq:choice-gamma} and $\eta_0$ in Proposition \ref{prop:main-cover-sep}. The constants $C,\kappa,\rho_0,H,\gamma,\eta_0$ all depend only on the values of $n,\Lambda,\eps$. 

In this section, we consider a closed interval $0 \in S$  with $|S|>0$  and a local foliation $\{M_s\}_{s\in S}$ with $M_s\subset \RR^{n+1}$ closed connected embedded hypersurfaces having 
\begin{equation}\label{eq:S-foliation-speed-assumpt}
	d(M_s, M_{s'}) \geq \tfrac12 |s-s'|
\end{equation} 
(e.g., a foliation with unit normal speed and small $|S|$ will do) and
\begin{equation}\label{eq:S-entropy-bd-assumpt}
\lambda(M_s) \leq \Lambda -\eps.
\end{equation} 
Shrinking $S$, we can assume that the $M_s$ are uniformly smooth and embedded, so in particular there is $t_0>0$ so that if $(\bx,t) \in \sing \cM,\cM \in \fF(M_s), s\in S$ then $t\geq t_0$. 

\subsection{Singular points on foliation Brakke flows}

For any $Q \subset S$ we will write
\begin{align}\label{eq:singS-ng-g}
\sing(Q) & : = \cup_{s \in Q} \cup_{\cM \in \fF(M_s)} \sing \cM,\\
\sing_\textrm{gen}(Q) & : = \cup_{s \in Q} \cup_{\cM \in \fF(M_s)} \sing_\textrm{gen} \cM \nonumber \\
\sing_\textrm{non-gen}(Q) & : = \cup_{s \in Q} \cup_{\cM \in \fF(M_s)} \sing_\textrm{non-gen} \cM. \nonumber
\end{align}
By Corollary \ref{coro:dont-cross-dens-well-defined} we see that if
\[ \cM,\cM' \in \cup_{s \in S} \fF(M_s) \text{ and } X=(\bx,t) \in \supp\cM\cap\supp\cM' \text{ has } t > 0, \]
then:
\begin{itemize}
	\item $\cM$, $\cM' \in \fF(M_{s_0})$ for $s_0\in S$,
	\item the set of tangent flows to $\cM$, $\cM'$ at $X$ agree, 
	\item as do the densities $\Theta_{\cM}(X)$, $\Theta_{\cM'}(X)$. 
\end{itemize}
In particular, just like how, for each $\cM \in \cup_{s \in S} \fF(M_s)$,
\[ \sing \cM = \sing_\textrm{gen} \cM \cup \sing_{\textrm{non-gen}} \cM \]
is a disjoint union (by \cite{ColdingIlmanenMinicozzi,ColdingMinicozzi:uniqueness-tangent-flow}), so is
\[ \sing(S) = \sing_\textrm{gen}(S) \cup \sing_{\textrm{non-gen}}(S). \]
Moreover, for any $X \in \sing(S) \iff X \in \sing \cM$, $\cM \in \fF(M_s)$, $s \in S$, we can define
\[ \Theta(X) := \Theta_\cM(X) \]
unambiguously without reference to $\cM$.

\subsection{The improved quantity}

Let $\fN \subset \sing_\textrm{non-gen}(S)$ be a closed subset with respect to the parabolic topology and define,
\[ \cD_\fN : S \to \{ -\infty \} \cup [1, \infty), \]
\[
\cD_\fN(s):= \sup \{ \Theta(X) : X \in \fN \cap \sing \cM, \; \cM \in \sF(M_s) \},
\]
where we use the standard convention $\sup\emptyset=-\infty$. 

\begin{lemma}\label{lemm:usc-DfS}
The function $s\mapsto \cD_\fN(s)$ is upper semi-continuous, i.e., for all $s_0 \in S$,
\[
\limsup_{s\to s_0} \cD_\fN(s) \leq \cD_\fN(s_0).
\]
\end{lemma}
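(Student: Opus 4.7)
The plan is to take a sequence $s_j \to s_0$ in $S$ realizing the limit $\limsup_{s\to s_0}\cD_\fN(s)$ and produce a point in $\fN$ witnessing the corresponding lower bound on $\cD_\fN(s_0)$ via a standard subsequential-limit argument for Brakke flows together with upper semi-continuity of Gaussian density. Without loss of generality, $\cD_\fN(s_j) \geq 1$ for all $j$; otherwise the inequality is trivial. For each $j$, pick $\cM_j \in \fF(M_{s_j})$ and $X_j = (\bx_j, t_j) \in \fN \cap \sing \cM_j$ with $\Theta_{\cM_j}(X_j) \geq \cD_\fN(s_j) - 1/j$. Since $\lambda(M_{s_j}) \leq \Lambda - \varepsilon < 2$ uniformly and since all singular times $t_j$ lie in a fixed bounded subinterval of $[t_0,\infty)$ (by smoothness of the initial data), Brakke compactness plus the assumed uniform smoothness of the foliation lets us pass, after a subsequence, to $\cM_j \rightharpoonup \cM$ and $X_j \to X$, with $X \in \supp \cM$.

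The first key step is to identify $\cM \in \fF(M_{s_0})$. Because $\{M_s\}_{s \in S}$ is a foliation with \eqref{eq:S-foliation-speed-assumpt}, for each $j$ with $s_j \neq s_0$ the hypersurface $M_{s_j}$ is disjoint from $M_{s_0}$, and by smoothness of the foliation $M_{s_j} \to M_{s_0}$ smoothly. The definition of $\fF(M_{s_0})$ (Definition \ref{defi:fF}) thus immediately gives $\cM \in \fF(M_{s_0})$. The second key step is to identify a flow in $\fF(M_{s_0})$ for which $X$ is a singular point. Since $\fN$ is closed in spacetime and $X_j \to X$, we have $X \in \fN \subset \sing_\textnormal{non-gen}(S)$, so by \eqref{eq:singS-ng-g} there is some $s^* \in S$ and $\cM^* \in \fF(M_{s^*})$ with $X \in \sing_\textnormal{non-gen} \cM^*$. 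Now apply the avoidance principle exactly as in Lemma \ref{lemm:fF-dontsmoothlycross}: if $s^* \neq s_0$ then $M_{s^*} \cap M_{s_0} = \emptyset$ would force $\supp \cM \cap \supp \cM^* = \emptyset$, contradicting $X \in \supp \cM \cap \supp \cM^*$. Hence $s^* = s_0$, so $X \in \fN \cap \sing \cM^*$ with $\cM^* \in \fF(M_{s_0})$.

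Finally, upper semi-continuity of Gaussian density for Brakke flows (Huisken monotonicity) gives
\[
\Theta_\cM(X) \;\geq\; \limsup_{j\to\infty} \Theta_{\cM_j}(X_j) \;\geq\; \limsup_{j\to\infty} \cD_\fN(s_j).
\]
By Corollary \ref{coro:dont-cross-dens-well-defined} (which applies because $t > 0$ and $\lambda(M_{s_0}) < 2$), $\Theta_\cM(X) = \Theta_{\cM^*}(X) = \Theta(X)$, so combining with $X \in \fN \cap \sing \cM^*$ and the definition of $\cD_\fN(s_0)$ yields
\[
\cD_\fN(s_0) \;\geq\; \Theta(X) \;\geq\; \limsup_{j\to\infty} \cD_\fN(s_j),
\]
which is the desired upper semi-continuity.

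The only genuinely delicate point is step two above: showing that the limit point $X$ is actually a singular point of \emph{some} flow in $\fF(M_{s_0})$, rather than merely a point on the support of $\cM$ where one has density information. The argument above sidesteps the subtler question of whether $X \in \sing \cM$ directly, and instead uses the closedness of $\fN$ together with avoidance for foliations to manufacture the witnessing flow $\cM^*$; this is the cleanest route and avoids having to analyze possible regularization in the limit flow $\cM$.
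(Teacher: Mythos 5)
Your proof is correct and follows essentially the same route as the paper: pick a sequence realizing the $\limsup$, pass to a subsequential limit $\cM_j \rightharpoonup \cM \in \fF(M_{s_0})$, $X_j \to X$, use closedness of $\fN$ to get $X \in \fN$, and conclude by upper semicontinuity of density. The only difference is that you spell out (via \eqref{eq:singS-ng-g}, avoidance, and Corollary \ref{coro:dont-cross-dens-well-defined}) why $X$ is a singular point of a flow in $\fF(M_{s_0})$ so that $\cD_\fN(s_0) \geq \Theta(X)$, a step the paper's terser proof leaves implicit; this is a worthwhile clarification, not a deviation.
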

\begin{proof}
Choose $s_i\to s_0$, $s_i \neq s_0$, $\cM_i \in \fF(M_{s_i})$, $X_i \in \sing\cM_i \cap \fN$ with 
\[
\lim_i \Theta(X_i) = \limsup_{s\to s_0} \cD_\fN(s). 
\]
Passing to a subsequence, $\cM_i\rightharpoonup \cM \in \fF(M_{s_0})$ and $X_i\to X$. Since $\fN$ is closed, $X\in\fN$. Thus,
\[
\cD_\fN(s_0) \geq \Theta(X) \geq \lim_i \Theta(X_i) = \limsup_{s\to s_0} \cD_\fN(s)
\]
by upper semicontinuity of density. This completes the proof. 
\end{proof}

The main result of this section is as follows. Below, $\eta_0$ is given by Proposition \ref{prop:main-cover-sep}.

\begin{proposition}\label{prop:dens-fS-drop}
We have
\begin{equation*}
\liminf_{s\to s_0} \cD_\fN(s) \leq \cD_\fN(s_0) - \eta_0 
\end{equation*}
for any $s_0 \in S$. 
\end{proposition}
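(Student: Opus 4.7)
The plan is to argue by contradiction, iterating Proposition~\ref{prop:main-cover-sep} in combination with the avoidance principle. First, the case $\cD_\fN(s_0)=-\infty$ is immediate: Lemma~\ref{lemm:usc-DfS} forces $\cD_\fN\equiv -\infty$ on a neighborhood of $s_0$, so $\liminf=-\infty$ and the inequality holds trivially.

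Now assume $\cD_\fN(s_0)\in[1,\infty)$ and, for contradiction, that there exist $\delta>0$ and a punctured neighborhood $U\setminus\{s_0\}$ with $\cD_\fN(s)\geq \cD_\fN(s_0)-\eta_0+\delta$ for every $s\in U\setminus\{s_0\}$. For each such $s$, pick $\cM_s\in\fF(M_s)$ and $X_s\in\fN\cap\sing\cM_s$ with $\Theta(X_s)\geq \cD_\fN(s_0)-\eta_0+\delta/2$. Choose $s_i\to s_0$ in $U\setminus\{s_0\}$ and pass to a subsequence with $\cM_{s_i}\rightharpoonup\cM_\infty\in\fF(M_{s_0})$ and $X_{s_i}\to X_*$. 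The closedness of $\fN$ and upper semicontinuity of density then yield $X_*\in\fN\cap\sing_{\textrm{non-gen}}\cM_\infty$ with $\Theta(X_*)\geq \cD_\fN(s_0)-\eta_0+\delta/2$.

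Parabolically rescale around $X_*$ by a small $\beta>0$, chosen via Huisken monotonicity so that $\hat\cM_\infty$ satisfies condition~(5) of Definition~\ref{defi:Peta0} at $(\bOh,0)$; Brakke convergence together with the density gap $\delta$ then gives $(\hat X_{s_i},\hat\cM_{s_i})\in\sP(\eta_0)$ for $i$ large. By \eqref{eq:S-foliation-speed-assumpt} and Lemma~\ref{lemm:fF-dontsmoothlycross}, no pair in $\cQ:=\{((\bOh,0),\hat\cM_\infty)\}\cup\{(\hat X_{s_i},\hat\cM_{s_i}):i\geq i_0\}$ crosses smoothly. Apply Proposition~\ref{prop:main-cover-sep} to $\cQ$ and iterate by zooming in, at each step, around the persistent singular point $(\bOh,0)$ of $\hat\cM_\infty$ (always taking it as one of the covering centers). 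For a fixed pair $(\hat\cM_\infty,\hat\cM_{s_i})$, $m$ valid iterations---valid while $\hat X_{s_i}\in P((\bOh,0),\gamma^m)$---compound the separation estimate (c) into
\[ d_{\gamma^m}(\hat\cM_\infty,\hat\cM_{s_i})\geq \gamma^{m(1-\kappa-D)}\,d_1(\hat\cM_\infty,\hat\cM_{s_i}). \]
Combined with the trivial upper bound $d_{\gamma^m}\lesssim \gamma^m$ (both sets lie within a $\gamma^m$-scale region at time $-4\gamma^{2m}$ because of the near-self-similar behaviour enforced by $\sP(\eta_0)$ and by Proposition~\ref{prop:main-cover-sep}(b)), this yields $d_1(\hat\cM_\infty,\hat\cM_{s_i})\lesssim \gamma^{m(\kappa+D)}$, which tends to $0$ along a diagonal subsequence since $\hat X_{s_i}\to(\bOh,0)$ permits $m=m_i\to\infty$.

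The contradiction comes from avoidance: $M_{s_0}$ and $M_{s_i}$ are disjoint with $d(M_{s_0},M_{s_i})\geq|s_i-s_0|/2$, and by Definition~\ref{defi:fF} elements of $\fF$ arise as weak limits of Brakke flows from disjoint approximating graphs, so passing this disjointness through the flow and the rescaling gives $d_1(\hat\cM_\infty,\hat\cM_{s_i})\gtrsim \beta^{-1}|s_i-s_0|>0$. The main obstacle is to execute the iteration/diagonalization carefully so that the upper bound $\gamma^{m_i(\kappa+D)}$ strictly beats the avoidance lower bound $\beta^{-1}|s_i-s_0|$---which requires controlling how fast $\hat X_{s_i}\to(\bOh,0)$ relative to $|s_i-s_0|\to 0$, exploiting that the foliation structure ties these two scales together. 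This is the parabolic analogue of the density-drop step in \cite{CCMS:low-ent-gen} and \cite{CMS:hardt-simon910}, complicated here by the ``going back in time'' subtlety in Proposition~\ref{prop:main-cover-sep}(c) highlighted in the introduction.
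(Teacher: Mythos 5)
Your overall setup (contradiction hypothesis, compactness to produce a limit point in $\fN$, rescaling into $\sP(\eta_0)$, and feeding pairs of non-crossing flows into Proposition \ref{prop:main-cover-sep}) matches the paper's, but the way you try to close the argument---iterating the separation estimate along a \emph{single} pair $(\hat\cM_\infty,\hat\cM_{s_i})$ and playing the resulting upper bound $d_1\lesssim\gamma^{m_i(\kappa+D)}$ against the avoidance lower bound $d_1\gtrsim|s_i-s_0|$---cannot work, and you have in fact flagged the obstruction yourself without resolving it. The iterated separation estimate (this is Proposition \ref{prop:seperation-estimate-s-param} in the paper) yields precisely $|s_i-s_0|\lesssim r_0\,\gamma^{m_i\kappa}\gamma^{\sum_j D_j}$, i.e.\ the foliation parameters are forced to be \emph{at least as close} as the bound you are hoping to beat. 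The two inequalities $d_1\lesssim\gamma^{m_i(\kappa+D)}$ and $d_1\gtrsim|s_i-s_0|$ are therefore mutually consistent for every $i$, and no diagonalization produces a contradiction from one pair: nothing ties the rate at which $\hat X_{s_i}\to(\bOh,0)$ (which governs the admissible depth $m_i$) to the rate at which $|s_i-s_0|\to 0$ from below.

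The missing idea is that the contradiction is measure-theoretic and hinges on the counting bound (a) of Proposition \ref{prop:main-cover-sep}, which your proposal never invokes in an essential way. One builds an iterated tree of covers of the whole set $\fN_+$ of nearly-top-density points (not of a single convergent sequence), so that each branch $\cX$ at depth $\ell$ confines the foliation parameters of \emph{all} flows having a singular point in that branch to an interval $U_\ell(\cX)$ of length $\lesssim r_0\,\gamma^{\ell\kappa}\gamma^{\sum_j D_j}$, while the total number of branches at depth $\ell$ is $\lesssim|\cT[0]|\,C^\ell\gamma^{-\sum_j D_j}$. The contradiction hypothesis, after shrinking $S$ via Lemma \ref{lemm:usc-DfS}, guarantees that every $s\in S$ carries such a singular point, so these intervals cover $S$ and hence $|S|\leq 32\,r_0\,|\cT[0]|\,(C\gamma^\kappa)^\ell\to0$, contradicting $|S|>0$. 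Note the exact cancellation of $\gamma^{-D}$ (counting) against $\gamma^{D}$ (separation), leaving the net factor $C\gamma^\kappa<1$ from \eqref{eq:choice-gamma}; this coupling of spine dimension and decay rate is the entire point of Propositions \ref{prop:uniform-est-spine-eig} and \ref{prop:main-cover-sep}, and it is invisible in a single-pair argument. (Separately, your verification of condition (5) of Definition \ref{defi:Peta0} needs to be uniform over all $s$, $\cM$, and $X$, not just along one limiting sequence; this is the content of Lemma \ref{lemm:unif-dens-drop-lb} in the paper and requires the two-sided density pinching obtained by combining the contradiction hypothesis with upper semicontinuity.)
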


\begin{corollary}\label{coro:dense-no-fS}
There is a relatively open dense subset $S' \subset S$ with $\sing(S') \cap \fN =\emptyset$. 
\end{corollary}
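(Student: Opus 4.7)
The plan is to set
\[
S' := \{ s \in S : \cD_\fN(s) = -\infty \}.
\]
By the definition of $\cD_\fN$, we have $s \in S'$ if and only if $\sing \cM \cap \fN = \emptyset$ for every $\cM \in \fF(M_s)$, so $\sing(S') \cap \fN = \emptyset$ is automatic. It remains to verify that $S'$ is relatively open and dense in $S$.

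Relative openness is essentially immediate from Lemma \ref{lemm:usc-DfS} combined with the ``gap'' in the range of $\cD_\fN$. If $\cD_\fN(s_0) = -\infty$ and $s_k \to s_0$, then upper semicontinuity gives $\limsup_k \cD_\fN(s_k) \leq -\infty$; but $\cD_\fN$ takes values only in $\{-\infty\} \cup [1, \infty)$, so any cluster point of $\{\cD_\fN(s_k)\}$ that is not $-\infty$ would be $\geq 1$. Hence $\cD_\fN \equiv -\infty$ on a relative neighborhood of $s_0$ in $S$.

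For density, I iterate Proposition \ref{prop:dens-fS-drop}. Given a nonempty relatively open $U \subset S$, pick any $s_0 \in U$. Construct $s_0, s_1, \ldots \in U$ inductively: at step $k$, if $\cD_\fN(s_k) = -\infty$, stop; otherwise the $\liminf$ inequality of Proposition \ref{prop:dens-fS-drop} guarantees points arbitrarily close to $s_k$ with $\cD_\fN$-value at most $\cD_\fN(s_k) - \tfrac{\eta_0}{2}$, so we may pick $s_{k+1} \in U$ (since $U$ contains a relative neighborhood of $s_k$) with
\[
\cD_\fN(s_{k+1}) \leq \cD_\fN(s_k) - \tfrac{\eta_0}{2}.
\]
The process must terminate at some finite $N$ with $\cD_\fN(s_N) = -\infty$, for otherwise we would have $1 \leq \cD_\fN(s_k) \leq \cD_\fN(s_0) - k \tfrac{\eta_0}{2}$ for all $k$, where $\cD_\fN(s_0) \leq \Lambda - \eps$ (since $\Theta(X) \leq \lambda(M_{s_0})$ by monotonicity), contradicting the bound for $k > 2(\Lambda - \eps - 1)/\eta_0$. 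Then $s_N \in U \cap S'$, proving density.

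There is no substantive obstacle: the two nontrivial ingredients, upper semicontinuity (Lemma \ref{lemm:usc-DfS}) and the uniform density drop (Proposition \ref{prop:dens-fS-drop}), have already been established, and the range condition $\cD_\fN \in \{-\infty\} \cup [1, \infty)$ is what turns ``drop by at least $\eta_0$ at every finite value'' into ``must reach $-\infty$ in finitely many steps.''
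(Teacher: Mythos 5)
Your proof is correct and follows essentially the same route as the paper: define $S' = \cD_\fN^{-1}(-\infty)$, deduce openness from Lemma \ref{lemm:usc-DfS} together with the integrality gap $\cD_\fN \in \{-\infty\}\cup[1,\infty)$, and obtain density by finitely iterating the drop of Proposition \ref{prop:dens-fS-drop} using the a priori upper bound $\cD_\fN \leq \Lambda - \eps$. Your write-up merely spells out the iteration that the paper leaves implicit.
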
 
\begin{proof}
Note that if $\cD_\fN(s) < 1$ then $\cD_\fN(s) = -\infty$ by integrality. Thus
\[ S' : = \cD_\fN^{-1}(-\infty) = \cD_\fN^{-1}([-\infty,1)) \]
is open using the upper-semicontinuity from Lemma \ref{lemm:usc-DfS}. On the other hand, iterating Proposition \ref{prop:dens-fS-drop} finitely many times, we see that $S'$ is dense. This completes the proof. 
\end{proof}

The remainder of this section is devoted to the proof of Proposition \ref{prop:dens-fS-drop}. Note that the failure of Proposition \ref{prop:dens-fS-drop} for some $s_0 \in S$ would mean that
\begin{equation}\label{eq:dens-fS-drop-fails}
\liminf_{s\to s_0} \cD_\fN(s) > \cD_\fN(s_0) - \eta_0 . 
\end{equation}
As such, by combining  \eqref{eq:dens-fS-drop-fails} with Lemma \ref{lemm:usc-DfS}, we see that if Proposition \ref{prop:dens-fS-drop} was false, we can shrink $S$ while preserving  $s_0 \in S$ so that
\begin{equation}\label{eq:dens-fS-not-much-change}
\cD_\fN(s_0) - \eta_0 \leq \cD_\fN(s) \leq \cD_\fN(s_0) + \eta_0 \text{ for all } s \in S.
\end{equation}
For the sake of contradiction, we can thus assume that \eqref{eq:dens-fS-not-much-change} holds in the remainder of the proof. 
\begin{lemma}\label{lemm:unif-dens-drop-lb}
There is $r_0\in (0,\tfrac 12 \sqrt{t_0}) $ so that 
\[
\Theta_\cM(X,r_0) \leq \cD_\fN(s) + 3\eta_0
\]
for every $s \in S$, $\cM\in \fF(M_s)$, and $X \in  \fN \cap \sing \cM$.
\end{lemma}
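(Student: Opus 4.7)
The plan is a compactness/contradiction argument. Suppose no such $r_0$ exists. Then one can extract sequences $r_j \downarrow 0$ with $r_j \in (0, \tfrac12 \sqrt{t_0})$, $s_j \in S$, $\cM_j \in \fF(M_{s_j})$, and $X_j = (\bx_j, t_j) \in \fN \cap \sing \cM_j$ satisfying
\[
	\Theta_{\cM_j}(X_j, r_j) > \cD_\fN(s_j) + 3\eta_0.
\]
Since $S$ is compact, the singular times satisfy $t_j \geq t_0 > 0$, and the uniform entropy bound $\lambda(\cM_j) \leq \Lambda - \eps < 2$ gives uniform area-ratio control, a diagonal extraction produces $s_j \to s_\infty \in S$, $X_j \to X_\infty = (\bx_\infty, t_\infty)$ with $t_\infty \geq t_0$, and an integral unit-regular cyclic Brakke flow $\cM_\infty$ with $\cM_j \rightharpoonup \cM_\infty$ (by Ilmanen's Brakke compactness). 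Closedness of $\fN$ in the parabolic topology gives $X_\infty \in \fN$.

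A preliminary but nontrivial point is to verify $\cM_\infty \in \fF(M_{s_\infty})$. If infinitely many $s_j$ differ from $s_\infty$, then (along that subsequence) the $M_{s_j}$ themselves are disjoint from $M_{s_\infty}$ by \eqref{eq:S-foliation-speed-assumpt} and smoothly converge to $M_{s_\infty}$, so they serve directly as the approximators required by Definition \ref{defi:fF}(b). If instead $s_j = s_\infty$ eventually, then each $\cM_j \in \fF(M_{s_\infty})$ already admits disjoint smooth approximators, and a diagonal extraction yields the same property for $\cM_\infty$.

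The core analytic step is to propagate the density lower bound from $X_j$ to $X_\infty$. For any fixed $r > 0$ we have $r > r_j$ for $j$ large, so Huisken monotonicity together with \eqref{eq:dens-fS-not-much-change} gives
\[
	\Theta_{\cM_j}(X_j, r) \geq \Theta_{\cM_j}(X_j, r_j) > \cD_\fN(s_j) + 3\eta_0 \geq \cD_\fN(s_0) + 2\eta_0.
\]
The scale-$r$ density is continuous under the weak convergence $\cM_j \rightharpoonup \cM_\infty$ combined with $X_j \to X_\infty$, since at fixed $r$ it pairs a bounded continuous Gaussian kernel against weakly converging time slices whose tails at infinity are controlled by the uniform area-ratio bound. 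Hence $\Theta_{\cM_\infty}(X_\infty, r) \geq \cD_\fN(s_0) + 2\eta_0$ for every $r > 0$, and letting $r \downarrow 0$ yields
\[
	\Theta_{\cM_\infty}(X_\infty) \geq \cD_\fN(s_0) + 2\eta_0 > 1,
\]
so $X_\infty \in \sing \cM_\infty$. Consequently $X_\infty \in \fN \cap \sing \cM_\infty$ with $\cM_\infty \in \fF(M_{s_\infty})$, which gives
\[
	\cD_\fN(s_\infty) \geq \Theta(X_\infty) \geq \cD_\fN(s_0) + 2\eta_0,
\]
contradicting the upper bound $\cD_\fN(s_\infty) \leq \cD_\fN(s_0) + \eta_0$ from \eqref{eq:dens-fS-not-much-change}.

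The main obstacle, beyond routine verification, is the book-keeping that $\cM_\infty \in \fF(M_{s_\infty})$ rather than merely in $\hat{\fF}(M_{s_\infty})$: this is what legitimizes applying the definition of $\cD_\fN(s_\infty)$ to the limiting singular point $X_\infty$, and it is where the foliation structure of Section \ref{sec:density-drop} (particularly the disjointness \eqref{eq:S-foliation-speed-assumpt}) is genuinely used rather than an abstract family of Brakke flows.
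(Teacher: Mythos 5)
Your proposal is correct and follows essentially the same compactness-and-contradiction argument as the paper: extract $r_j\to 0$, use monotonicity to fix the scale, pass to a weak limit $\cM_\infty\in\fF(M_{s_\infty})$ with $X_\infty\in\fN$ by closedness, and contradict \eqref{eq:dens-fS-not-much-change} via the definition of $\cD_\fN(s_\infty)$. The only (harmless) difference is bookkeeping: the paper compares against $\cD_\fN(s_\infty)+\eta_0$ rather than $\cD_\fN(s_0)+2\eta_0$, and it restricts the fixed comparison scale to $\rho\in(0,\sqrt{t_0})$ so that the density there is defined, a small point you should make explicit in place of ``any fixed $r>0$.''
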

\begin{proof}
If not, there exist $X_i$, $\cM_i$, $s_i$ as above, and $r_i\to 0$ so that 
\[
\Theta_{\cM_i}(X_i,r_i) > \cD_\fN(s_i) + 3\eta_0. 
\]
Passing to a subsequence (not relabeled), let $s_i \to s_\infty \in S$. Using \eqref{eq:dens-fS-not-much-change} twice we get 
\[
\cD_\fN(s_i) \geq \cD_\fN(s_0) -\eta_0 \geq \cD_\fN(s_\infty) - 2\eta_0. 
\] 
Thus, we find that 
\[
\Theta_{\cM_i}(X_i,r_i) > \cD_\fN(s_\infty) + \eta_0. 
\]
For any $\rho \in (0,\sqrt{t_0})$, monotonicity yields
\[
\Theta_{\cM_i}(X_i,\rho) > \cD_\fN(s_\infty) + \eta_0. 
\]
for large $i$. Pass to a further subsequence (not relabeled) so that $\cM_i\rightharpoonup \cM_\infty \in \fF(M_{s_\infty})$ and $X_i\to X_\infty$. Then,
\[
\Theta_{\cM_\infty}(X_\infty,\rho) \geq \cD_\fN(s_\infty) + \eta_0. 
\]
and sending $\rho\to 0$, we find 
\begin{equation}\label{eq:gauss-area-drop-small-fS-closed}
\Theta(X_\infty) \geq \cD_\fN(s_\infty) + \eta_0. 
\end{equation}
However, $\fN$ is closed so $X_\infty\in \fN$, contradicting \eqref{eq:gauss-area-drop-small-fS-closed}. This completes the proof. 
\end{proof}

Up to performing a (single) global scaling, we will assume below that $r_0=\eta_0^{-1}$. In particular, by Lemma \ref{lemm:unif-dens-drop-lb} and the definition of $t_0$, we see that 
\begin{equation}\label{eq:no-sing-eta0inv}
\sing(S) \cap \{ X = (\bx, t) : t \in (0,4\eta_0^{-2}]\} = \emptyset.
\end{equation}

\subsection{Covers of high-density points in $\fN$}

We wish to construct appropriate covers of the set of nearly top density points in $\fN$:
\[
\fN_+ : =\{X \in \fN : s \in S, \cM \in \fF(M_s), \Theta(X) \geq \cD_\fN(s) - \eta_0\}.
\]
Let us introduce the notation we will use. For $\ell\in\{0,1,\dots\}$ we will construct a finite set $\cT[\ell]$ together with ``geometric realization maps''
\[ \iota_\ell : \cT[\ell]\to\fN_+. \]
We will also choose ``parent'' maps
\[ \fp : \cT[\ell+1]\to \cT[\ell]. \]
We will write $\fp^{(k)}$ for the iterated parent (grandparent) map, with the usual convention that $\fp^{(0)}=\id : \cT[\ell]\to\cT[\ell]$.

\begin{remark}
Informally, we can think of $\cup_\ell \iota_\ell(\cT[\ell])$ as a tree of points in $\fN_+$. However, it is technically useful to allow multiple elements to map to the same point in $\fN_+$, but allowing them to have different parent elements.
\end{remark}

Now, assuming that we have inductively constructed $\cT[0]\xleftarrow{\fp} \cT[1] \xleftarrow{\fp} \dots \xleftarrow{\fp} \cT[\ell]$, we proceed to define the following for each $\cX \in \cT[\ell]$:
\begin{itemize}
	\item the spacetime neighborhood
	\[
	P_\ell(\cX) : = \bigcap_{j=0}^\ell P(\iota_j(\fp^{(\ell-j)}(\cX)),\gamma^j);
	\]
	\item the restriction of $\fN_+$ to it
	\[
	\fN_{+,\ell}(\cX): = \fN_+ \cap P_\ell(\cX);
	\]
	\item the set of pairs of (point, flow) so that the flow has a high density point lying in $\fN_{+,\ell}(\cX)$, followed by a translation/scaling taking $P(\iota_\ell(\cX),\gamma^\ell)\mapsto P((\bOh,0),1)$
\begin{equation*}
	\begin{split}
	 \cQ_{\ell}(\cX) &: = \{({ \operatorname{ParDil}_{\gamma^{-\ell}}}(X'-\iota_\ell(\cX)),{\operatorname{ParDil}_{\gamma^{-\ell}}}(\cM'-\iota_\ell(\cX))) :\\
	 &\qquad\qquad X' \in \fN_{+,\ell}(\cX) \cap \sing \cM', \cM' \in \fF(M_s), s \in S \},
	\end{split}
\end{equation*}
\end{itemize}
where $\operatorname{ParDil}_\lambda$ is the standard parabolic dilation of a Brakke flow around the space-time origin $(\bOh,0)$ by a factor $\lambda>0$.

Recalling that $\sP(\eta_0)$ is as in Definition \ref{defi:Peta0} and Proposition \ref{prop:main-cover-sep}, and that 
\[ \pi : (X, \cM) \mapsto X, \]
denotes the projection onto the first coordinate in $\sP(\eta_0)$, we have the following result:

\begin{lemma}
If $\cX \in \cT[\ell]$, then:
\begin{enumerate}
	\item[(a)] $\cQ_\ell(\cX)\subset \sP(\eta_0)$, and 
	\item[(b)] the assumptions of Proposition \ref{prop:main-cover-sep} are satisfied with $\cQ_\ell(\cX)$ in place of $\cQ$.
\end{enumerate}
\end{lemma}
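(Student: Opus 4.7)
The claim splits into two independent verifications: (a) the five numbered conditions in Definition~\ref{defi:Peta0} with $\eta=\eta_0$, applied to every element of $\cQ_\ell(\cX)$; and (b) the two hypotheses needed to invoke Proposition~\ref{prop:main-cover-sep}. Throughout, fix a representative pair $(X,\cM)\in\cQ_\ell(\cX)$ arising from $(X',\cM')$ with $X'\in\fN_{+,\ell}(\cX)\cap\sing\cM'$, $\cM'\in\fF(M_s)$, $s\in S$, and write $\iota_\ell(\cX)=(\bx_\ell,t_\ell)$.

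\textbf{Part (a).} Conditions (1), (3), (4) are essentially bookkeeping: (1) follows from $X'\in P_\ell(\cX)\subset P(\iota_\ell(\cX),\gamma^\ell)$ composed with the parabolic dilation by $\gamma^{-\ell}$; (3) from scale-invariance of entropy together with \eqref{eq:S-entropy-bd-assumpt}; and (4) from translation/dilation-invariance of non-generic singular type combined with $X'\in\fN\subset\sing_\textnormal{non-gen}(S)$ (using Corollary~\ref{coro:dont-cross-dens-well-defined} if one needs to certify that $X'$ is specifically a non-generic singularity of the chosen $\cM'$). For (2) the rescaled flow lives on $[-\gamma^{-2\ell}t_\ell,\infty)$, so it suffices to verify $\gamma^{-2\ell}t_\ell\geq \eta_0^{-2}$; since $\iota_\ell(\cX)\in\fN_+\subset\sing(S)$, \eqref{eq:no-sing-eta0inv} forces $t_\ell>4\eta_0^{-2}$ and $\gamma<1$ closes the estimate. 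The only substantive step is (5), which I handle by chaining three inequalities: density scaling plus Huisken monotonicity (valid since $\gamma<1$) gives $\Theta_\cM(X,\eta_0^{-1})=\Theta_{\cM'}(X',\gamma^\ell\eta_0^{-1})\leq \Theta_{\cM'}(X',\eta_0^{-1})$; Lemma~\ref{lemm:unif-dens-drop-lb} (applicable because the initial global rescaling absorbed $r_0$ to $\eta_0^{-1}$) bounds this by $\cD_\fN(s)+3\eta_0$; and the defining property of $\fN_+$ yields $\cD_\fN(s)\leq \Theta(X')+\eta_0=\Theta_\cM(X)+\eta_0$. Combining gives $\Theta_\cM(X,\eta_0^{-1})\leq \Theta_\cM(X)+4\eta_0$, as required.

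\textbf{Part (b).} The first hypothesis, $(\bOh,0)\in\pi(\cQ_\ell(\cX))$, I establish by taking $X':=\iota_\ell(\cX)$, which belongs to $\fN_+$ by definition of $\iota_\ell$, is a singular point of some flow in $\fF(M_s)$, and lies in $P_\ell(\cX)$ inductively: each $\iota_\ell(\cX)$ is chosen (via Proposition~\ref{prop:main-cover-sep} at the previous level) as a cover center inside its parent's parabolic ball, so iterating along the parent chain yields $\iota_\ell(\cX)\in P(\iota_j(\fp^{(\ell-j)}(\cX)),\gamma^j)$ for every $0\le j\le \ell$. The second hypothesis---no smooth crossings between flows in $\cQ_\ell(\cX)$---is immediate from Lemma~\ref{lemm:fF-dontsmoothlycross}: any two such flows are rigid-motion and parabolic-dilation images of original flows in $\fF(M_{s'})$, $\fF(M_{s''})$ whose underlying hypersurfaces are either equal or disjoint by \eqref{eq:S-foliation-speed-assumpt}, and smooth non-crossing is preserved under these transformations.

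\textbf{Main obstacle.} The only step that is not nearly mechanical is the inductive assertion $\iota_\ell(\cX)\in P_\ell(\cX)$ used in Part (b). Although it should fall out directly from the (to-be-made-precise) construction of $\cT[\ell]$ and $\fp$ via iterated applications of Proposition~\ref{prop:main-cover-sep}, it is the one point that rests on the tree-bookkeeping rather than on the geometric and monotonicity estimates already established.
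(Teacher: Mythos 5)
Your proof is correct and follows essentially the same route as the paper: conditions (1)--(4) by the same bookkeeping (with (2) via $\gamma<1$ and \eqref{eq:no-sing-eta0inv}), condition (5) by the identical chain combining Lemma \ref{lemm:unif-dens-drop-lb}, the definition of $\fN_+$, and monotonicity, and part (b) via the translation/scaling of the pairs and Lemma \ref{lemm:fF-dontsmoothlycross}. The one point you flag as an obstacle---that $\iota_\ell(\cX)\in P_\ell(\cX)$---is indeed what the paper compresses into ``follows from how we translated and scaled the pairs,'' and your inductive justification (the cover centers at each level are chosen inside $\fN_{+,\ell}(\cX)\subset P_\ell(\cX)$) is the intended one.
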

\begin{proof}
We first prove (a), i.e., that $\cQ_\ell(\cX)$ satisfies (1)-(5) in Definition \ref{defi:Peta0} (with $\eta=\eta_0$). We first note that since $\fN_{+,\ell} \subset P_\ell(\cX) \subset P(\iota_\ell(\cX), \gamma^\ell)$, we have
\[
{\operatorname{ParDil}_{\gamma^{-\ell}}}(\fN_{+,\ell}(\cX)-\iota_\ell(\cX)) \subset { \operatorname{ParDil}_{\gamma^{-\ell}}}(P_\ell(\cX) - \iota_\ell(\cX)) \subset P((\bOh,0),1),
\]
so (1) holds. Condition (2) follows from $\gamma < 1$ and \eqref{eq:no-sing-eta0inv}. Condition (3) follows from \eqref{eq:S-entropy-bd-assumpt}. Condition (4) follows from that $\fN \subset \sing_\textrm{non-gen}(S)$. Finally, for any $\cM' \in \fF(M_s)$ and $X' \in \fN_{+,\ell}(\cX) \cap \sing \cM'$, we have that
\[ \Theta_{\cM'}(X') \geq \cD_{\fS}(s)-\eta_0 \]
by definition of $\fN_+$ and 
\[
\Theta_{\cM'}(X', \eta_0^{-1}) \leq \cD_\fS(s) + 3\eta_0 
\]
by Lemma \ref{lemm:unif-dens-drop-lb} and the fact that we scaled so that $r_0= \eta_0^{-1}$. Putting this together, we find using monotonicity that
\[
\Theta_{\cM'}(X',\eta_0^{-1}\gamma^\ell) \leq \Theta_{\cM'}(X',\eta_0^{-1}) \leq \Theta_{\cM'}(X') + 4\eta_0 .
\]
Translating and scaling, this yields condition (5). Thus (a) holds.

Now consider (b). The first bullet point follows from how we translated and scaled the pairs, and the second bullet point follows from Lemma \ref{lemm:fF-dontsmoothlycross}. 
\end{proof}

Thus, Proposition \ref{prop:main-cover-sep} applies to each $\cQ_\ell(\cX)$, $\cX \in \cT[\ell]$ and yields the following corollary (after undoing the various definitions of $\cQ_\ell(\cX)$, etc.):

\begin{corollary}\label{coro:appl-covering-separation-depth-ell}
Let $\cX \in \cT[\ell]$. Then, there exist $D_\ell(\cX) \in \NN$ with $D_\ell(\cX) \leq n+2$ and points $X^\cX_1, \ldots, X^\cX_{K(\cX)} \in \fN_{+,\ell}(\cX)$ so that:
\begin{enumerate}
\item[(a)] $\fN_{+,\ell}(\cX) \subset \cup_{i=1}^{K(\cX)} P(X^\cX_i,\gamma^{\ell+1})$ with $K(\cX) \leq C \gamma^{-D_\ell(\cX)}$; 
\end{enumerate}
and denoting $\iota_\ell(\cX) = (\bx,t)$ and $X^\cX_i=(\bx_i,t_i)$ we also have:
\begin{enumerate}
\item[(b)] if $\cM' \in \cup_{s \in S} \fF(M_s)$ and $\fN_+ \cap P_\ell(\cX)\cap P(X^\cX_i,\gamma^{\ell+1})$ intersects $\sing \cM'$, then
\begin{align*}
\cR_{2\gamma^{\ell}\rho_0,2 \gamma^{\ell}r_0}\cM'(\bx,t - 4\gamma^{2\ell}) & \neq 0,\\
\cR_{2\gamma^{\ell+1}\rho_0,2 \gamma^{\ell+1}r_0}\cM'(\bx_i, t_i - 4\gamma^{2(\ell+1)}) & \neq 0, \text{ and},
\end{align*}
\item[(c)] if $\cM', \cM'' \in \cup_{s \in S} \fF(M_s)$ and $\fN_+ \cap P_\ell(\cX)\cap P(X^\cX_i,\gamma^{\ell+1})$ intersects both $\sing \cM'$ and $\sing \cM''$, then 
\begin{multline*}
\gamma^{-(\kappa+D_\ell(X))} \cdot (2\gamma^\ell)^{-1} d(\cR_{2\gamma^{\ell}\rho_0,2 \gamma^{\ell}r_0}\cM'(\bx,t - 4\gamma^{2\ell}), \supp\cM''(t-4\gamma^{2\ell})) \\
\leq (2\gamma^{\ell+1})^{-1}d(\cR_{2\gamma^{\ell+1}\rho_0,2 \gamma^{\ell+1}r_0}\cM'(\bx_i, t_i - 4\gamma^{2(\ell+1)}), \supp\cM''(t_i - 4\gamma^{2(\ell+1)})).
\end{multline*}
\end{enumerate}
\end{corollary}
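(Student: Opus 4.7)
The plan is to apply Proposition \ref{prop:main-cover-sep} directly to $\cQ_\ell(\cX)$---whose hypotheses are verified by the preceding lemma---and then unwind the parabolic translation/dilation
\[
\Phi_\ell(X') := \operatorname{ParDil}_{\gamma^{-\ell}}(X' - \iota_\ell(\cX))
\]
used to form $\cQ_\ell(\cX)$. Set $D_\ell(\cX) := D \leq n+2$, where $D$ is the parabolic dimension produced by Proposition \ref{prop:main-cover-sep}, and let $Y_1, \ldots, Y_K \in \pi(\cQ_\ell(\cX))$ be its cover points, with $K \leq C \gamma^{-D_\ell(\cX)}$. By definition of $\cQ_\ell(\cX)$, each $Y_i = \Phi_\ell(X^\cX_i)$ for some $X^\cX_i \in \fN_{+,\ell}(\cX)$, and these will be the desired output points. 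Part (a) is then immediate from Proposition \ref{prop:main-cover-sep}(a) together with the identity $\Phi_\ell^{-1}(P(Y_i,\gamma)) = P(X^\cX_i,\gamma^{\ell+1})$ (which follows from the scaling $\operatorname{ParDil}_{\gamma^{-\ell}}\colon P(\cdot,\gamma^{\ell+1}) \to P(\cdot,\gamma)$).

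For (b) and (c), given any $\cM' \in \fF(M_s)$, $s \in S$, with a singular point $X' \in \fN_+ \cap P_\ell(\cX) \cap P(X^\cX_i,\gamma^{\ell+1})$, the pair $(\Phi_\ell(X'),\tilde\cM')$ with $\tilde\cM' := \operatorname{ParDil}_{\gamma^{-\ell}}(\cM'-\iota_\ell(\cX))$ lies in $\cQ_\ell(\cX) \cap \pi^{-1}(P(Y_i,\gamma))$. One then applies Proposition \ref{prop:main-cover-sep}(b)--(c) to $\tilde\cM'$ (and the analogous $\tilde\cM''$ for (c)), and translates each statement back via Remark \ref{rema:reg-scale}. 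Writing $Y_i = (\bar\bx_i,\bar t_i)$, the rescaled spacetime points $(\bOh,-4)$ and $(\bar\bx_i,\bar t_i - 4\gamma^2)$ correspond to $(\bx, t - 4\gamma^{2\ell})$ and $(\bx_i, t_i - 4\gamma^{2(\ell+1)})$ in original coordinates, while the rescaled regularity-scale radii $(2\rho_0,2r_0)$ and $(2\gamma\rho_0,2\gamma r_0)$ become $(2\gamma^\ell\rho_0,2\gamma^\ell r_0)$ and $(2\gamma^{\ell+1}\rho_0,2\gamma^{\ell+1}r_0)$. This immediately yields (b). For (c) one further notes that distances $d(\cdot,\cdot)$ rescale by the overall factor $\gamma^{-\ell}$ under $\Phi_\ell$, and this factor cancels across the inequality, converting the prefactors $2^{-1}$ and $(2\gamma)^{-1}$ of Proposition \ref{prop:main-cover-sep}(c) into the $(2\gamma^\ell)^{-1}$ and $(2\gamma^{\ell+1})^{-1}$ demanded by the corollary.

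The proof contains no new geometric input beyond Proposition \ref{prop:main-cover-sep} and Remark \ref{rema:reg-scale}; the only potential pitfall is the careful bookkeeping of the parabolic translation/dilation $\Phi_\ell$ and the induced transformations of parabolic balls, regularity scales, and distances.
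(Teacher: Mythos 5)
Your proposal is correct and is essentially the paper's own argument: the paper derives this corollary in one line by applying Proposition \ref{prop:main-cover-sep} to $\cQ_\ell(\cX)$ (whose hypotheses were checked in the preceding lemma) and ``undoing the various definitions of $\cQ_\ell(\cX)$.'' Your unwinding of the parabolic dilation --- the correspondence of cover balls, the transformation of the regularity-scale sets via Remark \ref{rema:reg-scale}, and the cancellation of the $\gamma^{-\ell}$ distance factor across the inequality in (c) --- is exactly the bookkeeping the paper leaves implicit, and it checks out.
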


\begin{definition}
Let us now define the $\cT[\ell]$, $\ell \in \{ 0, 1, 2, \ldots \}$ and parent maps $\fp$:
\begin{itemize}
	\item For $\ell=0$, take $\cT[0] \subset \fN_+$ an arbitrary finite set so that $\fN_+ \subset \cup_{X \in \cT[0]} P(X,1)$.  Let $\iota_0$ denote the inclusion (identity) map.
	\item Then, inductively define
	\[ \cT[\ell+1] : = \bigcup_{\cX \in \cT[\ell]} \{ X^\cX_1, \ldots, X^\cX_{K(\cX)} \} \times \{\cX\} \]
	and
	\[ \iota_{\ell+1} : \cT[\ell+1] \to \fN_+, \]
	\[ \fp : \cT[\ell+1]\to\cT[\ell], \]
	to be projection onto the first and second factors, respectively.
\end{itemize}
\end{definition}

\begin{corollary}\label{coro:tree-covers}
For $\ell\in\{0,1,\dots\}$ it holds that 
\[
\fN_+ \subset \bigcup_{\cX \in \cT[\ell]} P_\ell(\cX). 
\]
\end{corollary}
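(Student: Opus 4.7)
The plan is to argue by induction on $\ell \geq 0$.

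For the base case $\ell = 0$, recall that $\cT[0]$ was chosen precisely so that $\fN_+ \subset \bigcup_{X \in \cT[0]} P(X, 1)$, and $\iota_0$ is the identity. By the definition of $P_\ell$, we have $P_0(\cX) = P(\iota_0(\cX), \gamma^0) = P(\cX, 1)$, which gives the base case directly.

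For the inductive step, suppose the statement holds at level $\ell$, and let $Y \in \fN_+$. By the inductive hypothesis there exists $\cX \in \cT[\ell]$ with $Y \in P_\ell(\cX)$, so $Y \in \fN_{+,\ell}(\cX)$. By Corollary \ref{coro:appl-covering-separation-depth-ell}(a), we can pick an index $i \in \{1,\ldots,K(\cX)\}$ with $Y \in P(X^\cX_i, \gamma^{\ell+1})$. By construction of $\cT[\ell+1]$, the element $\cX' := (X^\cX_i, \cX) \in \cT[\ell+1]$ satisfies $\iota_{\ell+1}(\cX') = X^\cX_i$ and $\fp(\cX') = \cX$, so $\fp^{(\ell+1-j)}(\cX') = \fp^{(\ell-j)}(\cX)$ for all $0 \leq j \leq \ell$. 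Hence
\[
P_{\ell+1}(\cX') = P(X^\cX_i, \gamma^{\ell+1}) \cap \bigcap_{j=0}^{\ell} P\bigl(\iota_j(\fp^{(\ell-j)}(\cX)), \gamma^j\bigr) = P(X^\cX_i, \gamma^{\ell+1}) \cap P_\ell(\cX),
\]
and $Y$ lies in both factors on the right-hand side. This closes the induction.

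There is no real obstacle here: the result is purely bookkeeping, combining the inductive covering property of $\fN_{+,\ell}(\cX)$ provided by Corollary \ref{coro:appl-covering-separation-depth-ell}(a) with the recursive definition of the nested neighborhoods $P_\ell(\cX)$ in terms of the parent map $\fp$. The only point that requires care is verifying that the iterated-parent chain behaves as expected under the definition $\cX' = (X^\cX_i, \cX)$, which is exactly what makes $P_{\ell+1}(\cX')$ factor as the intersection of the new ball $P(X^\cX_i, \gamma^{\ell+1})$ with the previous neighborhood $P_\ell(\cX)$.
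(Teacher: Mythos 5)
Your proof is correct and follows essentially the same inductive argument as the paper's, just phrased pointwise rather than as a chain of set inclusions; the identity $P_{\ell+1}(\cX') = P(X^\cX_i,\gamma^{\ell+1}) \cap P_\ell(\cX)$ that you verify explicitly is exactly the step the paper invokes via ``the definition of $\fp$ and $\iota_{\ell+1}$.''
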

\begin{proof}
 We induct on $\ell$. When $\ell=0$, since $P_0(X) = P(\iota_0(X),1)$, the assertion follows from the definition of $\cT[0]$. If the assertion holds for $\ell$, then
 \[
 \fN_+ = \fN_+ \cap \bigcup_{\cX \in \cT[\ell]}P_\ell(\cX) = \bigcup_{\cX \in \cT[\ell]} \fN_{+,\ell}(\cX).
 \]
By  (a) in Corollary \ref{coro:appl-covering-separation-depth-ell} and our definition of $\fp$ and $\iota_{\ell+1}$,
\[
\fN_{+,\ell}(\cX) \subset P_\ell(\cX) \cap \bigcup_{i=1}^{K(\cX)} P(X^\cX_i,\gamma^{\ell+1}) = \bigcup_{\cX' \in \fp^{-1}(\cX)} P_{\ell+1}(\cX').
\]
Therefore, since $\cup_{ \cX \in \cT[\ell]} \fp^{-1}(\cX) = \cT[\ell+1]$,
\[ 
\fN_+ = \bigcup_{\cX \in \cT[\ell]} \fN_{+,\ell}(\cX) \subset \bigcup_{\cX \in \cT[\ell]} \bigcup_{\cX' \in \fp^{-1}(\cX)} P_{\ell+1}(\cX') = \bigcup_{\cX' \in \cT[\ell+1]} P_{\ell+1}(\cX'). 
\]
This proves the assertion. 
\end{proof}

\begin{proposition}\label{prop:seperation-estimate-s-param}
Suppose that $\cX \in \cT[\ell]$ and $\iota_\ell(X) \in \sing \cM$ for $\cM\in \fF(M_s)$, $s\in S$, and suppose that $\cM'\in\fF(M_{s'})$, $s'\in S$, is such that $\fN_{+,\ell}(\cX)$ intersects $\sing \cM' \neq \emptyset$. Then,
\[
|s'-s| < 16 r_0 \gamma^{\ell\kappa}\gamma^{\sum_{j=0}^{\ell-1}D_j(\fp^{(\ell-j)}(\cX))}  
\]
\end{proposition}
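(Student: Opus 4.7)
The proof combines three ingredients: (i) Ilmanen's avoidance principle, which turns $|s-s'|$ into a lower bound for flow distances at a specific time; (ii) iteration of the separation inequality Corollary~\ref{coro:appl-covering-separation-depth-ell}(c) down the ancestor chain $\cX_j := \fp^{(\ell-j)}(\cX) \in \cT[j]$, $0 \le j \le \ell$; (iii) a direct distance bound at the deepest level via a round shrinking-sphere backward-in-time comparison. Relabel $\cN := \cM \in \fF(M_s)$ and $\cN' := \cM' \in \fF(M_{s'})$; write $(\bx_j,\tau_j) := \iota_j(\cX_j)$ and fix a point $X^* = (\bx^*,t^*) \in \fN_{+,\ell}(\cX) \cap \sing\cN'$ supplied by hypothesis. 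For $0 \leq j \leq \ell$ put
\[
d_j := d\big(\cR_{2\gamma^j\rho_0,\,2\gamma^j r_0}\cN(\bx_j,\tau_j - 4\gamma^{2j}),\; \supp\cN'(\tau_j - 4\gamma^{2j})\big).
\]
All the relevant times are positive: since $\iota_0(\cX_0) \in \fN_+$ and $r_0 = \eta_0^{-1}$, \eqref{eq:no-sing-eta0inv} gives $\tau_0 > 4r_0^2 > 16$, and $\tau_j \in (\tau_0-1,\tau_0+1)$ by $P_\ell(\cX) \subset P_0(\cX_0)$.

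\textbf{Step 1 (avoidance).} By \eqref{eq:S-foliation-speed-assumpt} and \cite[Lemma~10.6]{Ilmanen:elliptic}, the distance $d(\supp \cN(\tau),\supp\cN'(\tau))$ is nondecreasing in $\tau \geq 0$ and at least $d(M_s,M_{s'}) \geq \tfrac{1}{2}|s-s'|$. Since $\cR\cN \subset \supp\cN$, taking $\tau = \tau_0 - 4 > 0$ gives $|s-s'| \leq 2\,d_0$.

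\textbf{Step 2 (iterated separation).} For each $j \in \{0,\ldots,\ell-1\}$ I apply Corollary~\ref{coro:appl-covering-separation-depth-ell}(c) at $(\cX_j, X^{\cX_j}_i = \iota_{j+1}(\cX_{j+1}))$ with $(\cM',\cM'') = (\cN,\cN')$. The hypothesis — that $\fN_+ \cap P_j(\cX_j) \cap P(\iota_{j+1}(\cX_{j+1}),\gamma^{j+1})$ intersects $\sing\cN$ and $\sing\cN'$ — follows because $P_\ell(\cX) \subset P_{j+1}(\cX_{j+1}) = P_j(\cX_j) \cap P(\iota_{j+1}(\cX_{j+1}),\gamma^{j+1})$ already contains $\iota_\ell(\cX) \in \sing\cN \cap \fN_+$ and $X^* \in \sing\cN' \cap \fN_+$. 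The inequality of (c) rearranges to $d_j \leq \gamma^{\kappa + D_j(\cX_j) - 1}\,d_{j+1}$, and iterating yields
\[
d_0 \leq \gamma^{\ell(\kappa-1) + \sum_{j=0}^{\ell-1} D_j(\cX_j)}\, d_\ell.
\]

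\textbf{Step 3 (deepest level bound).} I claim $d_\ell \leq 8r_0\gamma^\ell$. Apply Corollary~\ref{coro:appl-covering-separation-depth-ell}(b) at level $\ell$ with $\cM' = \cN$ and a child $X^\cX_i$ (supplied by (a)) containing $\iota_\ell(\cX)$ in its $P(\cdot,\gamma^{\ell+1})$-ball; its hypothesis is met by $\iota_\ell(\cX) \in \fN_+ \cap P_\ell(\cX) \cap \sing\cN$. This produces a point in $\cR_{2\gamma^\ell\rho_0,2\gamma^\ell r_0}\cN(\bx_\ell,\tau_\ell - 4\gamma^{2\ell})$, necessarily lying in $B_{2\gamma^\ell r_0}(\bx_\ell)$. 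Meanwhile, $X^* \in P_\ell(\cX) \subset P(\iota_\ell(\cX),\gamma^\ell)$ gives $|\bx^* - \bx_\ell| < \gamma^\ell$ and $t^* - (\tau_\ell - 4\gamma^{2\ell}) \in (3\gamma^{2\ell},5\gamma^{2\ell})$. Since $\bx^* \in \supp\cN'(t^*)$, avoidance of $\cN'$ against a shrinking round $n$-sphere centered at $\bx^*$ yields a point of $\supp\cN'(\tau_\ell - 4\gamma^{2\ell})$ within distance $\sqrt{2n(t^*-\tau_\ell+4\gamma^{2\ell})} \leq \sqrt{10n}\,\gamma^\ell$ of $\bx^*$. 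The triangle inequality then gives
\[
d_\ell \leq (2r_0 + 1 + \sqrt{10n})\gamma^\ell \leq 8r_0\gamma^\ell,
\]
where the last step uses that $r_0 = r_0(n,\Lambda,\eps)$ may be enlarged (Proposition~\ref{prop:uniform-est-spine-eig} permits this) so that $6r_0 \geq 1 + \sqrt{10n}$. Combining Steps~1--3 yields
\[
|s-s'| \leq 2\,d_0 \leq 16\,r_0\,\gamma^{\ell\kappa + \sum_{j=0}^{\ell-1} D_j(\fp^{(\ell-j)}(\cX))},
\]
with strictness coming from the open parabolic ball $P(\iota_\ell(\cX),\gamma^\ell)$ (strict $|\bx^* - \bx_\ell| < \gamma^\ell$).

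\textbf{Main obstacle.} The only delicate step is Step~3: the singular point $X^*$ of $\cN'$ provided by the hypothesis sits at time $t^* \neq \tau_\ell - 4\gamma^{2\ell}$, so comparing $\supp\cN'$ at the prescribed earlier time requires ``going back in time'' — precisely the parabolic Harnack issue flagged in the introduction (issue (2) in \S\,``Generic regularity of minimizing hypersurfaces''). The shrinking-sphere barrier delivers the $\gamma^\ell$-scaling of $d_\ell$ that is exactly what makes the exponent in Step~2's iteration close off to the target $\ell\kappa + \sum D_j$; everything else is clean bookkeeping on the tree and the Frankel/avoidance machinery.
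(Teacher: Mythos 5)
Your proof is correct and follows the same architecture as the paper's: avoidance at time $0$ converts $|s-s'|$ into a lower bound for $d_0$, Corollary \ref{coro:appl-covering-separation-depth-ell}(c) is iterated down the ancestor chain exactly as in the paper (your verification that $P_\ell(\cX)\subset P_{j+1}(\cX_{j+1})$ contains both $\iota_\ell(\cX)$ and $X^*$ is the right way to check the hypothesis, and swapping which flow sits in the $\cR$-slot versus the $\supp$-slot is immaterial since the hypothesis of (c) is symmetric), and the exponent bookkeeping closes off to $16r_0\gamma^{\ell\kappa+\sum D_j}$. The one place you diverge is Step 3. The paper simply applies Corollary \ref{coro:appl-covering-separation-depth-ell}(b) to \emph{both} flows: the hypothesis of (b) holds for $\cN'$ as well, since $X^*\in\fN_{+,\ell}(\cX)\cap\sing\cN'$ lies in some $P(X^\cX_i,\gamma^{\ell+1})$ by (a), and the first conclusion of (b) then puts a point of $\supp\cN'(\tau_\ell-4\gamma^{2\ell})$ inside $B_{2\gamma^\ell r_0}(\bx_\ell)$ directly, giving $d_\ell<4r_0\gamma^\ell$ with no barrier argument and no constant to absorb. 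Your shrinking-sphere comparison is a valid substitute (it is the standard fact that the support of a Brakke flow at time $t<t^*$ must meet $\overline{B}_{\sqrt{2n(t^*-t)}}(\bx^*)$ whenever $(\bx^*,t^*)\in\supp\cN'$), but it costs you the dimension-dependent enlargement of $r_0$ to swallow $1+\sqrt{10n}$ — harmless, as you note, but unnecessary: the "going back in time" issue you flag as the main obstacle is exactly what part (b) of the corollary was engineered to handle, for either flow.
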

\begin{proof}
Write $\iota_{j}(\fp^{(\ell-j)}(\cX)) = (\bx_{j},t_{j})$ for $j=0,\dots,\ell$. Apply Corollary \ref{coro:appl-covering-separation-depth-ell} (c) at level $j$ (in place of $\ell$ there) centered at $\fp^{(\ell-j)}(\cX)$ (in place of $\cX$ there) and $(\bx_{j+1},t_{j+1})$ (in place of $\cX_i$ there) and $\cM$ (in place of $\cM''$ there) to obtain
\begin{multline}\label{eq:distance-growth-power-decay-to-iterate}
\gamma^{-(\kappa+D_{j}(\fp^{(\ell-j)}(\cX)))} (2\gamma^j)^{-1} d(\cR_{2\gamma^{j}\rho_0,2 \gamma^{j}r_0}\cM'(\bx_j,t_j - 4\gamma^{2j}), \supp\cM(t_j-4\gamma^{2j})) \\
\leq (2\gamma^{j+1})^{-1}d(\cR_{2\gamma^{j+1}\rho_0,2 \gamma^{j+1}r_0}\cM'(\bx_{j+1},t_{j+1} - 4\gamma^{2(j+1)}), \supp\cM(t_{j+1} - 4\gamma^{2(j+1)})).\end{multline}
Meanwhile, we note that at the smallest scale we have
\begin{equation}\label{eq:distance-growth-power-decay-to-iterate-smallest}
d(\cR_{2\gamma^{\ell}\rho_0,2 \gamma^{\ell}r_0}\cM'(\bx_{\ell},t_{\ell} - 4\gamma^{2\ell}), \supp\cM(t_{\ell} - 4\gamma^{2\ell})) < 4 \gamma^\ell r_0.
\end{equation}
Indeed, Corollary \ref{coro:appl-covering-separation-depth-ell} (b) implies that
\[ \cR_{2\gamma^{\ell}\rho_0,2 \gamma^{\ell}r_0}\cM'(\bx_{\ell},t_{\ell} - 4\gamma^{2\ell}) \cap B_{2\gamma^\ell r_0}(\bx_\ell) \neq \emptyset, \]
\[ \supp\cM(t_{\ell} - 4\gamma^{2\ell}) \cap B_{2\gamma^\ell r_0}(\bx_\ell) \neq \emptyset, \]
which in turn yields \eqref{eq:distance-growth-power-decay-to-iterate-smallest} by the triangle inequality.

Now, \eqref{eq:distance-growth-power-decay-to-iterate-smallest} and an iteration of \eqref{eq:distance-growth-power-decay-to-iterate} from $j=0$ to $j=\ell-1$ yield
\[
d(\cR_{2\rho_0,2r_0}\cM'(\bx_0,t_0 - 4), \supp\cM(t_0-4))  <  8 r_0 \gamma^{\ell \kappa}\gamma^{\sum_{j=0}^{\ell-1}D_j(\fp^{(\ell-j)}(\cX))}.
\]
Using that the distance between the support of co-dimension one Brakke flows is non-decreasing \cite[10.6]{Ilmanen:elliptic}, we get
\[
d(M_{s'}, M_s) =  d(\supp \cM'(0), \supp \cM(0))   < 8 r_0 \gamma^{\ell \kappa}\gamma^{\sum_{j=0}^{\ell-1}D_j(\fp^{(\ell-j)}(\cX))}.
\]
The result follows from the fact that the foliation speed of $\{ M_s \}$ implies
\[ |s'-s| \leq 2 d(M_{s'}, M_s) < 16 r_0 \gamma^{\ell \kappa}\gamma^{\sum_{j=0}^{\ell-1}D_j(\fp^{(\ell-j)}(\cX))} \]
by \eqref{eq:S-foliation-speed-assumpt}. This completes the proof.
\end{proof}

\subsection{The proof of Proposition \ref{prop:dens-fS-drop}}

For $\ell \in\{0,1,\dots\}$ fixed, we form an open cover of $S$ as follows: for $\cX \in \cT[\ell]$ we fix $s\in S$ with $\iota_\ell(\cX) \in \sing \cM$ for some $\cM \in \fF(M_s)$. Then, let $U_\ell(\cX) \subset S$ denote the relatively open interval centered at $s$ with length 
\[
|U_\ell(\cX)| = 32 r_0 \gamma^{\ell\kappa}\gamma^{\sum_{j=0}^{\ell-1}D_j(\fp^{(\ell-j)}(\cX))},
\]
or if $s$ is close to $\partial S$, then one part of $U_\ell(X)$ will be cut off and this estimate will be replaced by $\leq$; this goes in the right direction below. 

We now note that
\[ S = \bigcup_{\cX \in \cT[\ell]} U_\ell(\cX). \]
Indeed, for $s' \in S$, \eqref{eq:dens-fS-not-much-change} implies that there is $\cM' \in \fF(M_{s'})$ and $X' \in \fN_+ \cap \sing \cM'$. By Corollary \ref{coro:tree-covers}, $X' \in P_\ell(\cX)$ for some $\cX \in \cT[\ell]$, so $s' \in U_\ell(\cX)$ by Proposition \ref{prop:seperation-estimate-s-param}. 

It follows that
\begin{align*}
|S| 
& \leq \sum_{\cX \in \cT[\ell]} |U_\ell(\cX)| \\
& \leq 32 r_0 2^\ell  \gamma^{\ell\kappa} \sum_{\cX \in \cT[\ell]}\gamma^{\sum_{j=0}^{\ell-1}D_j(\fp^{(\ell-j)}(\cX))}\\
& = 32 r_0 2^\ell  \gamma^{\ell\kappa} \sum_{\cX_0 \in \cT[0]} \sum_{\cX_1 \in \fp^{-1}(X_0)} \cdots \sum_{\cX_\ell \in\fp^{-1}(X_{\ell-1})} \gamma^{\sum_{j=0}^{\ell-1}D_j(\cX_j)}
\end{align*}
By Corollary \ref{coro:appl-covering-separation-depth-ell} and the definition of $\fp$ we have for $j \in \{ 0, 1, \ldots, \ell-1 \}$ that
\[ |\fp^{-1}(\cX_{j})| = K(\cX_j) \leq C \gamma^{-D_j(\cX_j)}. \]
Therefore,
\[ 
|S| \leq 32 r_0 |\cT[0]|  \gamma^{\ell\kappa} C^\ell  = 32 r_0 |\cT[0]| (C \gamma^{\kappa})^{\ell}.
\]
By \eqref{eq:choice-gamma}, it holds that $C \gamma^{\kappa}<1$. Thus, the right hand side is $o(1)$ as $\ell\to\infty$, so $|S| = 0$. This is a contradictionto the fact that $S$ was an interval with $|S| > 0$, completing the proof.

\section{Proof of Theorem \ref{thm:main}}\label{sec:proof-main}

We fix $n \in \{ 2, 3, \ldots \}$, $\Lambda \in (0, 2]$ so that \eqref{heart_cond} and \eqref{diamond_cond} hold, any closed embedded $M^n\subset \RR^{n+1}$ with $\lambda(M) \leq \Lambda$. We need some preparations for the proof.

\subsection{Low-entropy local foliation} \label{subsec:foliation}

Flowing $M$ by (smooth) mean curvature flow for a short time will produce $M'$ an arbitrarily small $C^\infty$ graph over $M$ with $\lambda(M') < \lambda(M) \leq \Lambda$ unless $M$ is a self-shrinker. If $M$ is a self-shrinker then either it is a round sphere (in which case Theorem \ref{thm:main} follows trivially) or, by \cite[Theorem 0.7]{ColdingMinicozzi:generic}, there is an arbitrarily small $C^\infty$-graph $M'$ with $\lambda(M') < \lambda(M) \leq \Lambda$. As such, up to replacing $M$ by $M'$ considered above, we can assume that
\[ \lambda(M) \leq \Lambda - 2\eps. \]
For $S$ a closed interval with $0\in S$ and $|S| > 0$, let $\{M_s\}_{s \in S}$ denote a unit-speed (local) foliation so that the $M_s$ are uniformly smooth and embedded with
\[ \lambda(M_s) \leq \Lambda -\eps \text{ for all } s \in S. \]

\subsection{The generic strata}

Recall the definitions of $\sing (S), \singgen (S), \singng (S) \subset \RR^{n+1} \times \RR$ from \eqref{eq:singS-ng-g} and define, for $k=0,\ldots,n-1$, the $k$-th generic stratum by
\[
\fG^k : = \{X \in \sing_\textrm{gen}(S) : \Theta(X) = \lambda(\SS^{n-k}\times \RR^k)\} \subset \RR^{n+1} \times \RR.
\]
Note that
\[ \singgen(S) = \cup_{k=0}^{n-1} \fG^k. \]
Fix $\eta_1=\eta_1(n)$ so that
if $X_\ell \in \singng(S)$ and $X_\ell \to X \in \fG^k$, $k = 0, 1, \ldots, n-1$, then 
		\[ \limsup_\ell \Theta(X_\ell) \leq \lambda(\SS^{n-k}) - \eta_1. \]
The existence of such $\eta_1$ follows from a straightforward adaptation of \cite[Theorem 0.2]{ColdingIlmanenMinicozzi} to the present setting.  

\subsection{Closed subsets of non-generic singular points}

For $\alpha>0$ define the following subsets of the set of non-generic singular points:
\[
\tilde \fN_\alpha : = \singng(S) \setminus U_\alpha(\singgen(S))
\]
and, for $k=0,\ldots,n-1$, 
\[
\fN_\alpha^k := \singng(S) \cap \overline{\fG^k} \setminus \cup_{j=k+1}^{n-1} U_\alpha(\fG^j).
\]
Here, $\bar{\cdot}$ denotes the closure and $U_\alpha(\cdot)$ the open $\alpha$-neighborhood in spacetime $\RR^{n+1}\times\RR$, with respect to the parabolic metric. Finally, define
\[ \fN_\alpha : = \tilde \fN_\alpha \cup \fN_\alpha^{n-1} \cup \cdots \cup \fN_\alpha^{0}. \]
\begin{lemma} \label{lemm:na-closed}
$\fN_\alpha \subset \RR^{n+1} \times \RR$ is closed.
\end{lemma}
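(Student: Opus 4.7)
To prove $\fN_\alpha$ is closed, take any convergent sequence $X_\ell \to X$ with $X_\ell \in \fN_\alpha$. Since $\fN_\alpha$ is the finite union $\tilde\fN_\alpha \cup \fN_\alpha^{n-1} \cup \cdots \cup \fN_\alpha^0$, by pigeonhole I pass to a subsequence (not relabeled) whose terms all lie in a single piece. In every case $X_\ell \in \singng(S)$, so I can pick $s_\ell \in S$ and $\cM_\ell \in \fF(M_{s_\ell})$ with $X_\ell \in \sing \cM_\ell$. After further extraction, $s_\ell \to s_\infty \in S$ and $\cM_\ell \rightharpoonup \cM_\infty$. I then verify $\cM_\infty \in \fF(M_{s_\infty})$: if $s_\ell \neq s_\infty$ along a subsequence this follows directly from Definition \ref{defi:fF}(b), since the $M_{s_\ell}$ are disjoint from $M_{s_\infty}$ and converge smoothly to it; otherwise $s_\ell = s_\infty$ eventually and a diagonal argument over the approximating families of each $\cM_\ell$ produces the required sequence of nearby hypersurfaces and flows.

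Next I would show $X \in \sing \cM_\infty$, which places $X \in \sing(S)$. Suppose for contradiction $X \in \reg \cM_\infty$; by unit-regularity $\Theta_{\cM_\infty}(X) = 1$, so upper semicontinuity of Gaussian density combined with the entropy bound $\lambda(\cM_\ell) < 2$ and White's $\varepsilon$-regularity for unit-regular Brakke flows forces $X_\ell \in \reg \cM_\ell$ for $\ell$ sufficiently large, contradicting $X_\ell \in \sing \cM_\ell$.

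It remains to locate $X$ in the right piece of $\fN_\alpha$. If $X_\ell \in \tilde\fN_\alpha$ eventually, then $X$ lies in the closed complement of the open set $U_\alpha(\singgen(S))$, so $X \notin \singgen(S)$, giving $X \in \singng(S)$ and therefore $X \in \tilde\fN_\alpha$. The subtler case is $X_\ell \in \fN_\alpha^k$ eventually: then $X \in \overline{\fG^k}$ (closure of closed) and $X \notin \cup_{j>k} U_\alpha(\fG^j)$ (complement of open), which already rules out $X \in \fG^j$ for $j > k$. To exclude $X \in \fG^j$ for $j \leq k$ I invoke the $\eta_1$-gap: since $X_\ell \in \overline{\fG^k}$ and $\Theta \equiv \lambda(\SS^{n-k})$ on $\fG^k$, upper semicontinuity yields the lower bound $\Theta(X_\ell) \geq \lambda(\SS^{n-k})$; whereas $X \in \fG^j$ with $j \leq k$ would force $\limsup_\ell \Theta(X_\ell) \leq \lambda(\SS^{n-j}) - \eta_1 \leq \lambda(\SS^{n-k}) - \eta_1$ via the defining property of $\eta_1$ together with Stone's monotonicity $\lambda(\SS^{n-j}) \leq \lambda(\SS^{n-k})$ for $j \leq k$, contradicting the lower bound. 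Hence $X \in \singng(S)$, so $X \in \fN_\alpha^k$. The main obstacle is this final density-gap step, which is the one place the $\eta_1$ construction is essential; everything else reduces to standard Brakke-flow compactness together with the trivial observation that closures of closed sets and complements of open sets are closed.
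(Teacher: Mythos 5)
Your proof is correct and follows essentially the same route as the paper's: reduce to each piece of the union, use that complements of open sets and closures are closed, and rule out $X\in\singgen(S)$ in the $\fN_\alpha^k$ case by playing the lower bound $\Theta(X_\ell)\geq\lambda(\SS^{n-k})$ (from upper semicontinuity of density on $\overline{\fG^k}$) against the $\eta_1$-gap and Stone's monotonicity. The only differences are cosmetic: you spell out the closedness of $\sing(S)$ (which the paper leaves implicit) and apply the $\eta_1$-gap directly at level $j\leq k$ rather than first pinning down $X\in\fG^k$ as the paper does.
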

\begin{proof}
Let's show that each set in the union is closed. The case of $\tilde \fN_\alpha$ is straightforward: if $X_\ell \in \tilde\fN_\alpha$ are such that $X_\ell \to X$, then clearly $X\not \in U_\alpha(\singgen(S))$, so $X \in \tilde\fN_\alpha$. 

Next, consider $\fN_\alpha^k$. Take $X_\ell \in \fN_\alpha^k$ with $X_\ell\to X$. Then, $X \in \overline{\fG^k} \setminus \cup_{j=k+1}^{n-1} U_\alpha(\fG^j)$ is immediate.  It remains to prove that $X \in \singng(S)$. 

If not, then $X \in \singgen(S)$, so $X \in \cup_{m=0}^k \fG^m$ since $X \not \in \cup_{j=k+1}^{n-1} U_\alpha(\fG^j)$. Note that
\begin{equation}\label{eq:fSalpha-closed-usc-density}
X_\ell, \; X \in \overline{\fG^k} \implies \Theta(X_\ell), \; \Theta(X) \geq \lambda(\SS^{n-k})
\end{equation}
by upper semicontinuity of density. Combining \eqref{eq:fSalpha-closed-usc-density} and $\lambda(\SS^1) > \lambda(\SS^2) > \dots > \lambda(\SS^n)$, we have that $X \in \fG^k$. Thus, by the choice of $\eta_1$, we have
\[
\limsup_\ell \Theta(X_\ell) \leq \lambda(\SS^{n-k}) - \eta_1.
\]
This contradicts the fact that $\Theta(X_\ell) \geq \lambda(\SS^{n-k})$ by \eqref{eq:fSalpha-closed-usc-density}.
\end{proof}

\subsection{The perturbation}

It follows from Lemma \ref{lemm:na-closed} that Corollary \ref{coro:dense-no-fS} applies with $\fN_\alpha$ in place of $\fN$ and yields a relatively open dense subset $S'_\alpha \subset S$ with $\sing(S'_\alpha) \cap \fN_\alpha = \emptyset$. 

Consider $S' = \cap_{\alpha \in \QQ\cap(0,1]} S'_\alpha$, which is dense by the Baire category theorem. Theorem \ref{thm:main} now follows from Lemma \ref{lemm:final-regularity-concl} below.

\begin{lemma}\label{lemm:final-regularity-concl}
$\singng(S')=\emptyset$.
\end{lemma}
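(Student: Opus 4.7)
The plan is to derive a contradiction from the assumption that some $X \in \singng \cM$ exists for some $\cM \in \fF(M_s)$ with $s \in S'$. Since $s \in S' = \bigcap_{\alpha \in \QQ \cap (0,1]} S'_\alpha$, we have $s \in S'_\alpha$ for every rational $\alpha \in (0,1]$, and consequently $X \in \sing(S'_\alpha)$ for every such $\alpha$ by the definition of $\sing(\cdot)$. On the other hand, Lemma \ref{lemm:na-closed} says $\fN_\alpha$ is closed in the parabolic topology, so Corollary \ref{coro:dense-no-fS} applies with $\fN_\alpha$ in place of $\fN$ and yields $\sing(S'_\alpha) \cap \fN_\alpha = \emptyset$. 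Thus it suffices to exhibit \emph{one} rational $\alpha \in (0,1]$ with $X \in \fN_\alpha$.

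To find such an $\alpha$, I would do a case split based on the position of $X$ relative to the generic stratification $\singgen(S) = \bigcup_{k=0}^{n-1} \fG^k$. In the first case, $X \notin \overline{\singgen(S)}$; then the parabolic distance $d$ from $X$ to $\singgen(S)$ is strictly positive, so for any rational $\alpha \in (0, \min\{d,1\})$ we have $X \notin U_\alpha(\singgen(S))$ and therefore $X \in \tilde\fN_\alpha \subset \fN_\alpha$. In the second case, $X \in \overline{\singgen(S)} = \bigcup_{k=0}^{n-1} \overline{\fG^k}$ (a finite union), so there is a maximal index $k^\star \in \{0,\dots,n-1\}$ with $X \in \overline{\fG^{k^\star}}$. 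Then $X$ lies at positive parabolic distance from the closed set $\bigcup_{j > k^\star} \overline{\fG^j}$, so for any sufficiently small rational $\alpha \in (0,1]$ we have $X \notin U_\alpha(\fG^j)$ for all $j > k^\star$. Combining this with $X \in \singng(S) \cap \overline{\fG^{k^\star}}$, we obtain $X \in \fN_\alpha^{k^\star} \subset \fN_\alpha$.

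In either case we have produced a rational $\alpha \in (0,1]$ with $X \in \fN_\alpha$, contradicting $\sing(S'_\alpha) \cap \fN_\alpha = \emptyset$ as noted above. This completes the proof of Lemma \ref{lemm:final-regularity-concl}.

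There is no real obstacle here: the only substantive content is the case analysis, which uses in an essential way (a) that the family $\{\overline{\fG^k}\}$ is finite, so the union is closed and one can pick a top stratum, and (b) that the open sets $S'_\alpha$ have been arranged with a countable parameter $\alpha \in \QQ \cap (0,1]$ so that Baire applies and $S'$ is dense (and in particular nonempty). The deeper inputs (the density drop Proposition \ref{prop:dens-fS-drop}, the gap $\eta_1$ from \cite{ColdingIlmanenMinicozzi}, and the closedness of $\fN_\alpha$ from Lemma \ref{lemm:na-closed}) have already been invoked in constructing $S'$ and $\fN_\alpha$; this final lemma is a purely set-theoretic bookkeeping step.
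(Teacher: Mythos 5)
Your proof is correct and is essentially the paper's argument run in the contrapositive direction: instead of intersecting the constraints $X \notin \fN_{\alpha_i}$ over a sequence $\alpha_i \to 0$, you directly exhibit a single small rational $\alpha$ with $X \in \fN_\alpha$ via the same case split (either $X$ avoids $\overline{\singgen(S)}$, or one picks the maximal stratum $k^\star$ with $X \in \overline{\fG^{k^\star}}$ and uses positive parabolic distance to the finite closed union $\cup_{j>k^\star}\overline{\fG^j}$). The bookkeeping is sound, so nothing further is needed.
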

\begin{proof}
Suppose, for contradiction, that there did exist some $X \in \singng(S')$, and take any sequence $\alpha_i \in \QQ \cap (0,1]$ with $\alpha_i\to 0$. Fix some $i$. Using that
\begin{equation} \label{eq:final-regularity-concl-subset}
	S' \subset S_{\alpha_i} \text{ and } \sing(S_{\alpha_i}) \cap \fN_{\alpha_i} = \emptyset,
\end{equation}
we see that $X \not \in \fN_{\alpha_i}$. But $\fN_{\alpha_i} \supset \tilde \fN_{\alpha_i}$, so $X \not \in \tilde\fN_{\alpha_i}$, so in particular $X \in U_{\alpha_i}(\singgen(S))$. Since $i$ was arbitrary,
\[
X \in \cap_i U_{\alpha_i} (\singgen(S)) = \overline{\singgen(S)} = \overline{\cup_{k=0}^{n-1} \fG^k}. 
\]
As a result, there exists some $k = 0, 1, \ldots, n-1$ so that
\begin{equation} \label{eq:final-regularity-concl-k}
	X \in \overline{\fG^k} \setminus \cup_{j=k+1}^{n-1} \overline{\fG^j}.
\end{equation}
Fix $i$ again. Using \eqref{eq:final-regularity-concl-subset} again, now together with $\fN_{\alpha_i} \supset \fN^k_{\alpha_i}$, it follows that $X \not \in \fN^k_{\alpha_i}$. Combined with $X \in \singng(S')$ we see that 
\[
X \notin \overline{ \fG^k} \setminus \cup_{j=k+1}^{n-1} U_{\alpha_i}(\fG^j).  
\]
Since $X \in \overline{\fG^k}$ by \eqref{eq:final-regularity-concl-k}, it must hold that
\[ X \in  \cup_{j=k+1}^{n-1} U_{\alpha_i}(\fG^j). \]
But since $i$ was arbitrary, this implies that
\[ X \in \cap_i  \cup_{j=k+1}^{n-1} U_{\alpha_i}(\fG^j) = \cup_{j=k+1}^{n-1} \overline{\fG^j}, \]
contradicting \eqref{eq:final-regularity-concl-k}. This completes the proof. 
\end{proof}

\appendix
\section{The generic strong multiplicity-one property}\label{app:strong-mult-one}
In this appendix we consider the following condition:

\begin{definition} \label{defi:strong.mult.one}
We say that a closed embedded hypersurface $M^n \subset \RR^{n+1}$ satisfies the \emph{strong multiplicity-one property} if the following holds. Consider:
\begin{itemize}
\item hypersurfaces $M_j \subset\RR^{n+1}$ converging smoothly to $M$,
\item flows $\cM_j \in \fF(M_j)$,
\item space-time points $X_j \to X \in \RR^{n+1}\times (0,\infty)$, and 
\item scales $\lambda_j \to \infty$,
\end{itemize}
so that $\operatorname{ParDil}_{\lambda_j}(\cM_j - X_j) \rightharpoonup \tilde\cM$. Then $\reg\tilde\cM$ has multiplicity one and the parabolic dimension of the singular set satisfies $\dim_H \sing\tilde\cM \leq n$. 
\end{definition}

In particular, polyhedral cones other than the flat multiplicity one hyperplane cannot arise as limit flows of perturbations of $M$.

\begin{remark}
Bamler has recently proven that a condition of similar to the strong multiplicity-one property does indeed hold in the Ricci flow setting \cite{Bamler:compactness,Bamler:structure} (in all dimensions). 
\end{remark}

\begin{remark} \label{rema:strong.mult.one.r3}
Bamler--Kleiner’s \cite{BamlerKleiner} recent resolution of Ilmanen’s multi\-plicity-one conjecture implies that the strong multiplicity-one property in Definition \ref{defi:strong.mult.one} holds for any embedded $M^2 \subset \RR^3$.
\end{remark}

It's important to observe that by dimension reduction \cite{White:stratification} (and the cyclic property of $M$ \cite{White:cyclic}), if $M$ fails to have the strong multiplicity-one property then we can adjust $X_j,\lambda_j$ so as to ensure that either:
\begin{enumerate}
\item $\tilde \cM$ is a static/quasi-static multiplicity $\geq 2$ hyperplane, or
\item a union of $\geq 4$ multiplicity-one half $n$-planes meeting along an $(n-1)$-plane. 
\end{enumerate}
(Thus, e.g., all $M$ with $\lambda(M) < 2$ automatically have the strong multiplicity-one property; cf. Lemma \ref{lemm:sing-F-stat}.)

\begin{lemma}[Openness of the strong multiplicity-one property]\label{lemm:open-strong-mult}
Suppose that $M_k$ are smooth closed embedded hypersurfaces in $\RR^{n+1}$ smoothly converging to $M$. If $M$ satisfies the strong multiplicty-one property then so does $M_k$ for $k$ sufficiently large. 
\end{lemma}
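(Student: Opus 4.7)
The plan is to argue by contradiction via a diagonalization. Assume, after passing to a subsequence, that each $M_k$ fails the strong multiplicity-one property. For each $k$ fix a witnessing sequence $(M_{k,j}, \cM_{k,j}, X_{k,j}, \lambda_{k,j})_j$ as in Definition~\ref{defi:strong.mult.one} so that $\operatorname{ParDil}_{\lambda_{k,j}}(\cM_{k,j} - X_{k,j}) \rightharpoonup \tilde\cM_k$ where $\tilde\cM_k$ fails the conclusion of the property. By Federer dimension reduction together with the cyclic property (as noted after Definition~\ref{defi:strong.mult.one}), after adjusting $(X_{k,j},\lambda_{k,j})$ we may arrange that $\tilde\cM_k$ is either (a)~the static or quasi-static flow associated to a multiplicity-$m_k \geq 2$ hyperplane $\Pi_k$ through $\bOh$, or (b)~the static flow associated to a union of $m_k \geq 4$ multiplicity-one half-$n$-planes meeting along an $(n-1)$-plane through $\bOh$; in either case $\Theta_{\tilde\cM_k}(\bOh, 0) \geq 2$ and the multiplicities $m_k$ are uniformly bounded by the entropy bound inherited from $M$.

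Next, I will diagonalize. Short-time smoothness of the flow of $M$, together with $M_{k,j} \to M$ smoothly as $k, j \to \infty$, produces a uniform $t_0 > 0$ for which all the flows $\cM_{k,j}$ are smooth on $[0, t_0]$; the convex-hull property confines their support to a fixed compact region. I will choose $j(k) \geq k$ large enough that (i)~$M_{k,j(k)} \to M$ smoothly, (ii)~$\lambda_{k,j(k)} > k$, (iii)~$|X_{k,j(k)} - X_k| < 1/k$, and (iv)~$\operatorname{ParDil}_{\lambda_{k,j(k)}}(\cM_{k,j(k)} - X_{k,j(k)})$ lies within $1/k$ of $\tilde\cM_k$ in a fixed metric for weak Brakke convergence on bounded spacetime regions. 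After further subsequences in $k$, $X_{k,j(k)} \to X \in \RR^{n+1} \times [t_0, \infty)$ and $\tilde\cM_k \rightharpoonup \tilde\cM_\infty$; by (iv) this $\tilde\cM_\infty$ is simultaneously the weak limit of $\operatorname{ParDil}_{\lambda_{k,j(k)}}(\cM_{k,j(k)} - X_{k,j(k)})$, so the assumed strong multiplicity-one property of $M$ forces $\reg \tilde\cM_\infty$ to have multiplicity one and $\dim_H \sing \tilde\cM_\infty \leq n$.

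Finally, I will derive a contradiction by showing that $\tilde\cM_\infty$ inherits the forbidden structure from the $\tilde\cM_k$. In case (a), compactness of the Grassmannian and integrality of multiplicities under the uniform entropy bound yield, along a subsequence, $\Pi_k \to \Pi_\infty$ and $m_k \to m_\infty \geq 2$, so $\tilde\cM_\infty$ is the static/quasi-static multiplicity-$m_\infty$ plane $\Pi_\infty$, immediately violating multiplicity-one on $\reg \tilde\cM_\infty$. In case (b), the $(n-1)$-planes converge to a limiting $(n-1)$-plane $L_\infty$ and the half-planes converge as stationary cyclic varifolds (with possible pairwise coalescence accumulating multiplicity); the cyclic property rules out three half-planes in the limit, while total density $\geq m_\infty/2 \geq 2$ along $L_\infty$ is preserved by upper-semicontinuity, so $\tilde\cM_\infty$ is either still a union of $\geq 4$ half-planes along $L_\infty$ (giving $\sing \tilde\cM_\infty \supset L_\infty \times \RR$ of parabolic dimension $n+1 > n$), or its half-planes collapse pairwise to form a multiplicity-$\geq 2$ hyperplane; both alternatives contradict the previous paragraph. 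The main obstacle is the case-(b) analysis: one has to track how stationary unions of $\geq 4$ half-planes can weakly degenerate, using cyclicity to exclude three-half-plane configurations and the entropy bound to keep multiplicities finite, and thereby exclude any limit outside the dichotomy above.
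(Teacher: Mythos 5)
Your proposal is correct and follows essentially the same route as the paper: argue by contradiction, use dimension reduction and cyclicity to reduce each failure to a static/quasi-static multiplicity-$\geq 2$ plane or a union of $\geq 4$ half-planes, diagonalize so that the points $X_{k,j(k)}$ stay at times bounded away from $0$ by short-time smoothness, and check that the limit of these model configurations still violates the conclusion. The only (cosmetic) difference is in the half-plane coalescence case: the paper re-centers the points and scales to land back in the multiplicity-$\geq 2$ plane case, whereas you directly verify that any partially or fully coalesced limit still has either a multiplicity-$\geq 2$ regular part or a singular set of parabolic dimension $n+1$ --- both arguments work.
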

\begin{proof}
If not, there's $M_{k,j}$ smoothly converging to $M_k$, flows $\cM_{k,j}\in \fF(M_{k,j})$, space-time points $X_{k,j}\to X_k \in \RR^{n+1}\times (0,\infty)$, and scales $\lambda_{k,j}\to\infty$ so that 
\[
\operatorname{ParDil}_{\lambda_{k,j}}(\cM_{k,j} - X_{k,j}) \rightharpoonup \tilde\cM_k
\] 
as $j\to\infty$, where either $\tilde\cM_k$ satisfies (1) or (2) above. If (1) is satisfied for infinitely many $k$ we can pass to a subsequence so that $\tilde\cM_k \rightharpoonup \tilde\cM$ still with $\tilde\cM$ satisfying (1). If (2) is satisfied for infinitely many $k$, then either $\tilde\cM_k \rightharpoonup \tilde\cM$ still with $\tilde\cM$ satisfying (2) as is, or (2) where or two (or more) of the half-planes overlap.

In any case, we can (possibly after adjusting $X_{k,j},\lambda_{k,j}$ and passing to an un-labeled subsequence) find a sequence $j_0(k)\to \infty$ so that if $j(k)\geq j_0(k)$ then 
\[
\operatorname{ParDil}_{\lambda_{k,j(k)}}(\cM_{k,j(k)} - X_{k,j(k)}) \rightharpoonup \tilde\cM
\] 
where $\tilde\cM$ satisfies either (1) or (2). Indeed, the only case that's not obvious is the final possibility in which $\tilde\cM_k$ is a union of half-planes with two or more converging. In this case we just have to re-center the points and scales in the converging half-planes so as to arrange that we're in case (1). 

Taking $j(k) \geq j_0(k)$ sufficiently large, we can ensure that $M_{k,j(k)}$ converges smoothly to $M$.  In particular,  the flows $\cM_{k,j(k)}$ are smooth for a definite interval of time, and thus the points $X_{k,j(k)}$ are bounded away from time $0$. The data $M_{k,j(k)}$, $\cM_{k,j(k)}$, $X_{k,j(k)}$, $\lambda_{j(k)}$ contradict the assumed strong multiplicity-one property of $M$. This completes the proof. 
\end{proof}

We will work with initial data that can be approximated by hypersurfaces with the strong multiplicity-one property:

\begin{definition}
We say that a closed embedded hypersurface  $M^n\subset\RR^{n+1}$ satisfies the \emph{generic strong multiplicity one-property} if there are $M_k$ smoothly converging to $M$ with $M_k$ having the strongly multiplicity-one property. 
\end{definition}

The techniques in this paper used to prove Corollaries \ref{coro:R4} and \ref{coro:R5} can be used to prove the following result, which drops all entropy bounds and replaces them with the generic strong multiplicity-one assumption. (A stronger partial result holds for $n=2$ by \cite{CCMS:generic1,CCS:generic2}; see \cite[Theorem 9.2]{CCS:generic2} for a precise statement.) 

\begin{theorem}\label{theo:generic-mult-one-assump}
If $M^n \subset \RR^{n+1}$, $n\in \{2,3,4\}$ has the generic strong multiplicity one property then there exist arbitrarily small $C^\infty$ graphs $M'$ over $M$ so that $\singng\cM'=\emptyset$ for all $\cM'\in\fF(M')$. 
\end{theorem}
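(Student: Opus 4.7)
The plan is to adapt the proof of Theorem~\ref{thm:main}, with the strong multiplicity-one property playing the role that the entropy bound $\lambda(M) \leq 2$ did in the main argument. Since $n \in \{2,3,4\}$, the hypotheses~\eqref{diamond_cond} and~\eqref{heart_cond} hold vacuously for any $\Lambda$ by Remarks~\ref{rema:diamond} and~\ref{rema:heart}, so the only obstacle to applying Theorem~\ref{thm:main} is the missing entropy bound.

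First, a reduction. By the generic strong multiplicity-one assumption, choose $M_k \to M$ smoothly with each $M_k$ having the strong multiplicity-one property. By Lemma~\ref{lemm:open-strong-mult}, it suffices to find, for each $k$, arbitrarily small $C^\infty$ graphs $M_k'$ over $M_k$ with $\singng \cM_k' = \emptyset$ for every $\cM_k' \in \fF(M_k')$; a diagonal argument then produces the desired $M'$ over $M$. Fix $k$, and construct a short unit-speed foliation $\{M_s\}_{s \in S}$ about $M_k$ as in Section~\ref{subsec:foliation}, inside a $C^\infty$-open neighborhood on which the strong multiplicity-one property persists.

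Next, I would revisit each place in the proof of Theorem~\ref{thm:main} where the bound $\lambda \leq \Lambda - \eps$ is invoked and replace it with strong multiplicity-one. That entropy bound is used to: (i) guarantee that $F$-stationary varifold limits of rescalings are smooth with singular set of codimension $\geq 3$, cf.\ Lemma~\ref{lemm:sing-F-stat}; (ii) rule out quasi-static cones so that spines lie in $\sS$, cf.\ Remark~\ref{rema:no-quasistatic}; (iii) obtain continuity of the regularity scale, cf.\ Remark~\ref{rema:reg-scale-continuity}; and (iv) invoke Colding--Ilmanen--Minicozzi rigidity of $\cS_n^\textrm{gen}$ inside Lemma~\ref{lemm:limit-cF-flows}. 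By strong multiplicity-one together with the Federer-style dimension reduction noted after Definition~\ref{defi:strong.mult.one}, items (i) and (ii) follow for every rescaling limit of a flow in $\fF(M_s)$, $s \in S$. For (iii), strong multiplicity-one plus a standard Allard-type argument yields continuity of the regularity scale on such flows. For (iv), the CIM $d_*$-rigidity argument applies as soon as the limit $F$-stationary varifolds are smooth multiplicity-one with codimension-three singular set, both of which are delivered by strong multiplicity-one.

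With these substitutions, the key technical ingredients of Sections~\ref{sec:one-sided-flows}--\ref{sec:density-drop} go through essentially verbatim: Proposition~\ref{prop:geo-input}, Lemmas~\ref{lemm:limit-cF-flows}--\ref{lemm:geo-arg-2}, the separation/covering Proposition~\ref{prop:main-cover-sep}, and the density-drop Proposition~\ref{prop:dens-fS-drop}. The argument of Section~\ref{sec:proof-main} then gives that the set $S' \subset S$ of parameters $s$ for which no $\cM \in \fF(M_s)$ has non-generic singularities is dense; any $M_s$ with $s \in S'$ close to $0$ works. The main obstacle is item~(iv): verifying that the CIM $d_*$-rigidity transfers to our setting without an explicit entropy threshold. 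This reduces to a compactness statement for the class of smooth multiplicity-one $F$-stationary varifolds arising as blowup limits of perturbations of $M_k$, for which strong multiplicity-one---by a priori ruling out higher-multiplicity and polyhedral limits---is precisely the hypothesis needed.
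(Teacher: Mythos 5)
Your proposal is correct and follows essentially the same route as the paper's own proof: perturb to a hypersurface with the strong multiplicity-one property, use Lemma \ref{lemm:open-strong-mult} to propagate that property to an entire local foliation, and rerun Sections \ref{sec:low-ent-F-var}--\ref{sec:proof-main} with the entropy hypothesis replaced by strong multiplicity-one at exactly the points you identify (regularity of blow-up limits \`a la Lemma \ref{lemm:sing-F-stat}, spines, continuity of the regularity scale, and the Colding--Ilmanen--Minicozzi rigidity step, with $\sP(\eta)$ restricted to rescalings of flows coming from the foliation). The paper records the same list of modifications, so no further comparison is needed.
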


Note that a natural generalization of Ilmanen's multiplicity-one conjecture would be for all \emph{all} initial data themselves to have the strong multiplicity-one property directly; a weaker one is for all initial data to satisfy the \emph{generic} strong multiplicity-one property.

Let us explain the necessary modifications to prove Theorem \ref{theo:generic-mult-one-assump}. First, by assumption we can perturb $M$ (not relabeled) so that $M$ satisfies the strong multiplicity-one property. Now, following the setup in Section \ref{sec:proof-main}, we embed $M$ in a local unit speed foliation $\{M_s\}_{s\in S}$. By Lemma \ref{lemm:open-strong-mult} we can shrink $S$ (still with $0\in S$) so that all $M_s$, $s \in S$, have the strong multiplicity-one property. 

We make the following modifications to the argument used in the rest of the paper:
\begin{enumerate}
\item Any $\cM \in \fF(M_s)$, $s\in S$ has $\reg\cM$ connected and multiplicity one by \cite[Corollary F.4]{CCMS:generic1} and the strong-multiplicity one property (here we just have to consider tangent flows to $\cM$ without varying the parameter $s$). In particular, the regularity scale continues to be continuous (cf. Remark \ref{rema:reg-scale-continuity}). 
\item Instead of considering arbitrary $F$-stationary varifolds we always consider $F$-stationary varifolds that correspond to self-similar flows arising as limits of $\cM_{s_j}\in\fF(M_{s_j})$, $s_j \in S$. The conclusion of Lemma \ref{lemm:sing-F-stat} continues to hold in this setting. The $\lambda(V) < 2$ assumption in subsequent results (e.g. Lemma \ref{lemm:smooth-cross-stat-var}, Corollary \ref{coro:dont-cross-dens-well-defined}, Proposition \ref{prop:finite-mu-smooth}, etc.) is simply used to refer to a regularity result along the lines of Lemma \ref{lemm:sing-F-stat}, and thus these results continue to hold in this more general setting. 
\item The class $\sP(\eta)$ as in Definition \ref{defi:Peta0} should be replaced with the smaller set of such pairs that arise as rescalings of flows $\cM \in \fF(M_s)$, $s\in S$ (and condition (3) should be removed). A similar modification (only considering flows that arise as rescalings of flows starting at the $M_s$, $s\in S$) should be made in subsequent arguments (e.g.\ Lemmas  \ref{lemm:rescaled-flows-converge-graphical} and \ref{lemm:unif-Harnack}, as well as Proposition \ref{prop:main-cover-sep}). 
\end{enumerate}
The remainder of the argument is unchanged. 
\begin{remark}
It would suffice to assume that $M$ satisfies a different  form of the generic strong multiplicity-one property in which we only considers initial data that are leaves of the local foliation $\{M_s\}$.
\end{remark}

\begin{remark}
Note that we do not know if the second main hypothesis \eqref{heart_cond} holds when $n=5$ and $\Lambda$ is large (in other words, we do not know if there's a non-generic shrinker $\Sigma^2\subset \RR^3$ with $\mu(\Sigma) \geq -\frac32$). Because of this, it's unclear if Theorem \ref{theo:generic-mult-one-assump} holds in $\RR^6$ (even assuming the generic strong multiplicity one property for $M$).  
\end{remark}

\section{Stable minimal cones as shrinkers}\label{app:stable-mu}

In this appendix, we prove an estimate for the first eigenvalue of the $L$-operator on a stable minimal cone. This is not used elsewhere in the paper. This should be related to the question of whether or not the Simons cone can arise generically as a (static) tangent flow (cf.\ \cite{Velazquez,GuoSesum,Stolarski}).

Lemma \ref{prop:finite-mu-smooth} has the following consequence.

\begin{corollary}\label{coro:unstable-cone}
Let $V$ be a conical stationary cyclic integral $n$-varifold in $\RR^{n+1}$ with $\lambda(V) < 2$. If $\reg V$ is unstable with respect to the Euclidean area functional, then $\mu(V) = -\infty$.
\end{corollary}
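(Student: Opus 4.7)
The plan is to argue by contrapositive, deducing the result as an immediate consequence of Proposition \ref{prop:finite-mu-smooth} together with the self-similarity of cones. First I would note that, as a conical stationary varifold, $V$ satisfies $\bx^\perp \equiv \bOh$ on $\reg V$, so the shrinker equation $\bH + \tfrac12 \bx^\perp = \bOh$ reduces to $\bH \equiv \bOh$. Hence $V$ is $F$-stationary and Proposition \ref{prop:finite-mu-smooth} applies to it.

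Next, I would observe that since $V$ is a cone, $\lambda V = V$ for every $\lambda > 0$. Thus, choosing $\bx_i = \bOh$ and any sequence of scales $\lambda_i \to \infty$, the blow-up sequence $\tilde V_i := \lambda_i(V - \bx_i) = V$ converges trivially to $\tilde V = V$, exhibiting $V$ itself as a blow-up limit of $V$ in the sense of Proposition \ref{prop:finite-mu-smooth}. Assuming for contradiction that $\mu(V) > -\infty$, that proposition forces $\reg \tilde V = \reg V$ to be stable with respect to the Euclidean area functional, contradicting our hypothesis. Taking the contrapositive gives $\mu(V) = -\infty$, as desired. There is no real obstacle: the corollary is essentially just Proposition \ref{prop:finite-mu-smooth} applied in the trivial case where the blow-up is the object itself.
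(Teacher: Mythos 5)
Your proof is correct and is essentially the paper's (implicit) argument: the paper simply asserts that Corollary \ref{coro:unstable-cone} is a consequence of Proposition \ref{prop:finite-mu-smooth}, and the intended application is exactly the one you give, namely that a minimal cone is $F$-stationary (since $\bx^\perp\equiv\bOh$) and is its own blow-up limit under dilations $\lambda_i\to\infty$ centered at the origin, so $\mu(V)>-\infty$ would force $\reg V$ to be stable.
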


If $\reg V$ is stable, the conclusion of Corollary \ref{coro:unstable-cone} need not hold. For example it is easy to check that $\mu(\RR^n) = -\tfrac 12$. We now show that non-flat stable cones satisfy an improved inequality. 
\begin{proposition}
Take $n\geq 2$ and $V$ any a conical stationary cyclic integral $n$-varifold in $\RR^{n+1}$ with\footnote{Actually, the result here would hold for a stationary minimal cone satisfying Wickramasekera's $\alpha$-structural hypothesis \cite{Wickramasekera:SS} (it is easy to check that a stationary integral cyclic varifold $V$ with $\lambda(V) < 2$ satisfies the $\alpha$-structural hypothesis).} $\lambda(V) < 2$. If $V$ is not a flat hyperplane, then
\[ \mu(V) \leq -1-\delta_n, \]
for some $\delta_n>0$ depending only on $n$. 
\end{proposition}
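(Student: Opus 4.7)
By Corollary~\ref{coro:unstable-cone}, if $\reg V$ is unstable then $\mu(V)=-\infty$, so we may assume $\reg V$ is stable. The plan is a compactness-and-contradiction argument. Suppose no $\delta_n>0$ as claimed exists; then one can find a sequence of stable, non-flat, cyclic integral $F$-stationary cones $V_j\subset\RR^{n+1}$ with $\lambda(V_j)<2$ and $\mu(V_j)>-1-\tfrac{1}{j}$. By compactness of stationary integral varifolds with bounded entropy, extract a weak subsequential limit $V_j\rightharpoonup V_\infty$; then $V_\infty$ is again a cyclic integral stationary cone with $\lambda(V_\infty)\leq 2$ and stable regular part. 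Upper semicontinuity of the first $L$-eigenvalue (Lemma~\ref{lemm:spine-limiting}(b), adapted to the $F$-stationary setting) then yields $\mu(V_\infty)\geq\limsup_j\mu(V_j)\geq -1$.

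I then derive a contradiction in each of the two possible cases for $V_\infty$. If $V_\infty$ is a flat hyperplane, then the entropy bound $\lambda<2$ combined with Allard's regularity upgrades the weak convergence $V_j\rightharpoonup V_\infty$ to smooth convergence on compact subsets of $\reg V_\infty$; in particular the links $\Sigma_j:=V_j\cap\SS^n$ converge smoothly to a great sphere, which forces $\sup_{\Sigma_j}|A_{\Sigma_j}|^2\to 0$. This contradicts Simons' gap theorem, which asserts that any non-totally-geodesic minimal hypersurface $\Sigma\subset\SS^n$ satisfies $\sup_\Sigma |A|^2\geq n-1$. If instead $V_\infty$ is non-flat, I invoke a strict Colding--Minicozzi-type inequality $\mu(V_\infty)<-1$ for non-flat stable cyclic integral $F$-stationary cones of entropy at most $2$, which contradicts $\mu(V_\infty)\geq -1$.

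The main obstacle is establishing the strict inequality $\mu<-1$ for non-flat stable $F$-stationary cones in the varifold setting. In the smooth shrinker case this is Colding--Minicozzi's characterization of $F$-stable shrinkers, obtained via a compactly supported variation orthogonal to translations that strictly decreases the $F$-functional. In our varifold setting the argument relies on Wickramasekera's $\alpha$-structural regularity of $\reg V_\infty$ (this is why the hypothesis $\lambda(V)<2$, rather than smoothness of $V$, suffices, per the footnote), together with a test-function construction adapted to the cone structure of the form $f=r^\alpha\, h(\omega)\,\chi(r)$, where $h$ is a first eigenfunction on the link $\Sigma_\infty$ of the operator $-(\Delta_{\Sigma_\infty}+|A_{\Sigma_\infty}|^2)$ (which is strictly negative since $\Sigma_\infty$ is not a great sphere), $\alpha$ is the corresponding indicial root, and $\chi$ is a compactly supported radial cutoff. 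A secondary technical point is to handle the possibility $\lambda(V_j)\to 2$ in the limit, which is addressed by first proving the statement for each truncated entropy level $\lambda\leq 2-\varepsilon$ and then carefully interchanging the compactness argument with $\varepsilon\to 0$.
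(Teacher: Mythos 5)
There is a genuine gap, and it sits exactly at the point you label the ``main obstacle.'' The entire content of the proposition is the strict inequality $\mu<-1$ (indeed $\mu\leq-1-\delta_n$) for a non-flat \emph{stable} cone, and your compactness argument merely reduces the quantitative statement to this qualitative one for the limit $V_\infty$, which you then ``invoke'' as a Colding--Minicozzi-type inequality. No such result is available off the shelf: Colding--Minicozzi's proofs that non-generic shrinkers have $\mu<-1$ use $H$ and $\langle\bx,\nu\rangle$ (dilations and translations) as destabilizing directions, and both vanish identically on a minimal cone. You do name the right mechanism---separation of variables $f=w(r)h(\omega)$ with $h$ an eigenfunction on the link and $\alpha$ an indicial root---but the input you cite, that the first eigenvalue $\nu_1$ of $-(\Delta_{\Sigma}+|A_{\Sigma}|^2)$ on the link is merely ``strictly negative,'' is not enough: the indicial relation $\alpha(\alpha+n-2)=\nu_1$ gives $\mu\leq\tfrac{\alpha-1}{2}$, and if $\nu_1$ is only slightly negative then $\alpha$ is close to $0$ and this bound is only slightly below $-\tfrac12$, not below $-1$. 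What makes the proof work is the quantitative Simons/Zhu estimate $\nu_1\leq-(n-1)$ for any non-totally-geodesic (possibly singular) minimal link; combined with stability this forces $n\geq7$ and yields $\mu\leq\tfrac14\bigl(-n+\sqrt{n^2-8n+8}\bigr)=-1-\delta_n$ after optimizing $\alpha$ and controlling the cutoff errors. That computation \emph{is} the paper's proof, and once you have it the compactness wrapper is superfluous, since it already produces a uniform explicit $\delta_n$ for every $V$ directly.

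The compactness wrapper is also flawed on its own terms. A sequence of cones with $\lambda(V_j)<2$ can converge to a varifold with $\lambda(V_\infty)=2$, in particular to a hyperplane with multiplicity two; this case is missing from your dichotomy. There $\mu(V_\infty)=-\tfrac12\geq-1$, so upper semicontinuity of $\mu$ yields no contradiction, and the Simons-gap argument fails because the links converge to a doubled great sphere and are not eventually graphs over it (so one cannot conclude $\sup|A_{\Sigma_j}|\to0$). Your proposed repair---proving the statement at each entropy level $\Lambda\leq2-\varepsilon$ and letting $\varepsilon\to0$---produces constants $\delta_{n,\varepsilon}$ that may degenerate as $\varepsilon\to0$ and therefore does not recover a $\delta_n$ depending only on $n$. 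I recommend discarding the compactness scheme and carrying out the direct test-function computation with the sharp link eigenvalue bound, which is both necessary and sufficient.
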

\begin{proof}
By \cite[Lemma 6.1.1]{Simons:minvar} and dimension reduction we can assume that $n\geq 7$ (otherwise Corollary \ref{coro:unstable-cone} gives $\mu(V) = -\infty$). 

Let $\Sigma \subset \SS^n$ denote the regular part of the link of $V$. By \cite{Zhu:sphere-stability} (cf.\ \cite{Simons:minvar}) we can find $\Sigma' \Subset \Sigma$ so that $\Sigma'$ has smooth boundary and so that there is $u \in C^\infty(\Sigma')$ with $u=0$ on $\partial\Sigma'$ (if nonempty) and
\begin{equation}\label{eq:first-dir-eig-link-cone}
\Delta_\Sigma u + ( |A_\Sigma|^2 - (n-1)) u \geq 0. 
\end{equation}
Indeed, by \cite[Theorem 0.1]{Zhu:sphere-stability} (since $\Sigma$ is not a totally geodesic $\SS^n$ by assumption) it holds that
\[
\inf\left\{ \int_\Sigma |\nabla u|^2 + (|A_\Sigma|^2 - (n-1)) u^2 : u \in C^\infty_c(\Sigma), \int_\Sigma u^2 = 1\right\} \leq 0
\]
with equality if and only if $V$ is the cone over a Clifford hypersurface (and in particular $\sing V =\{\bOh\}$). As such, if $\sing V \neq \{\bOh\}$ this equality is strict, so we can exhaust $\Sigma$ by smooth regions; the first Dirichlet eigenfunction on these regions will eventually satisfy \eqref{eq:first-dir-eig-link-cone}. Conversely, if $\sing V = \{\bOh\}$ we can find $u$ satisfying \eqref{eq:first-dir-eig-link-cone} with $\Sigma' =\Sigma$ (in this case, this is a consequence of \cite[Lemma 6.1.7]{Simons:minvar}). 

We now consider $f(r,\omega) = w(r)u(\omega)$ for $u$ as in \eqref{eq:first-dir-eig-link-cone} and $w \in C^\infty_c((0,\infty))$ in \eqref{eq:first-eig-L}. We find
\begin{align*}
\mu(V) \int_0^\infty w(r)^2 e^{-\tfrac 14 r^2} r^{n-1} dr 
& \leq \int_0^\infty \left( w'(r)^2 - ((n-1)r^{-2} + \tfrac 12) w(r)^2 \right) e^{-\tfrac 14 r^2} r^{n-1} dr.
\end{align*}
A standard argument shows that for any $\alpha > \frac{2-n}{2}$ we can take
\[
w(r) = \begin{cases}
0 & r \in (0,\eps^2]\\
\eps^\alpha(2-\tfrac{\log r}{\log \eps}) & r \in (\eps^2,\eps]\\
r^\alpha & r \in (\eps,\eps^{-1}]\\
\eps^{-\alpha}(2-\eps r) & r \in (\eps^{-1},2\eps^{-1}]\\
0 & r \in (\eps^{-2},\infty). 
\end{cases}
\]
This yields
\begin{align*}
& \mu(V) \int_0^{\infty} e^{-\tfrac 14 r^2} r^{n+2\alpha-1} dr \\
& \leq \int_0^\infty \left( (\alpha^2-(n-1))r^{-2} -   \tfrac 12  \right) e^{-\tfrac 14 r^2} r^{n+2\alpha-1} dr + o(\eps^{n+2\alpha-2}). 
\end{align*}
We perform the change of variables $r=2\sqrt{s}$ in both integrals:
\begin{align*}
& \mu(V) \int_0^{\infty} e^{-s} s^{\frac{n+2\alpha}{2}-1} ds \\
& \leq \frac 12  \int_0^\infty \left( \tfrac{\alpha^2-(n-1)}{2} s^{-1} - 1)  \right) e^{-s} s^{\frac{n+2\alpha}{2}-1} ds + o(\eps^{n+2\alpha-2}). 
\end{align*}
Recalling that for $\textrm{Re}(z) > 0$ the Gamma function $\Gamma(z) : = \int_0^\infty s^{z-1} e^{-s} ds$ satisfies the recurrence relation $\Gamma(z+1) = z\Gamma(z)$, we find 
\begin{align*}
\mu(V) & \leq \frac 12 \left(\frac{\alpha^2-(n-1)}{2}  \frac{\Gamma(\frac{n+2\alpha}{2} - 1)}{\Gamma(\frac{n+2\alpha}{2})} - 1 \right) + o(\eps^{n+2\alpha-2})\\
& = \frac 12 \left(\frac{\alpha^2-(n-1)}{n+2\alpha-2}  - 1 \right) + o(\eps^{n+2\alpha-2})\\
& = \frac 12 \frac{\alpha^2 - 2\alpha -2n + 3}{n+2\alpha-2}  + o(\eps^{n+2\alpha-2})
\end{align*}
Thus, we find that
\[
\mu(V) \leq \inf_{\alpha > \frac{2-n}{2}}\frac 12 \frac{\alpha^2 - 2\alpha -2n + 3}{n+2\alpha-2}. 
\]
(Note that for $n \leq 6$ this infimum is $-\infty$; compare this with Corollary \ref{coro:unstable-cone}.) Furthermore,
\begin{align*}
\inf_{\alpha > \frac{2-n}{2}}\frac 12 \frac{\alpha^2 - 2\alpha -2n + 3}{n+2\alpha-2} & = \frac 1 4 \left( - n + \sqrt{8-8n+n^2} \right) \\
& = -1 - \underbrace{\frac 1 4 \left(   \sqrt{16-8n+n^2}  -  \sqrt{8-8n+n^2} \right)}_{:=\delta_n}
\end{align*}
(since $n\geq7$). This completes the proof. \end{proof}
Note that $\delta_7 =\tfrac 12 $, $\delta_8 \approx 0.29$, and $\delta_n \to 0$ as $n\to \infty$. 

\bibliographystyle{alpha}
\bibliography{bib}

\end{document}